\documentclass[a4paper]{article}
\usepackage[english]{babel}
\usepackage[utf8]{inputenc}
\usepackage{amsmath,amsthm}
\usepackage{amsfonts,amssymb}

\usepackage{xspace}

\usepackage[colorlinks=true,linktocpage=true,linkcolor=blue,citecolor=red]{hyperref}
\usepackage{zref-clever}

\usepackage{enumitem}
\setenumerate[1]{label=(\arabic*)} 

\usepackage{mathtools}

\usepackage{tikz,tikz-cd} 
\usetikzlibrary{arrows}
\usetikzlibrary{calc} 

\newcommand{\Acal}{\mathcal{A}}

  \newcommand{\Rb}{\mathbb{R}}

\newcommand{\Dcal}{\mathcal{D}}

\newcommand{\Mcal}{\mathcal{M}}  
\newcommand{\Wcal}{\mathcal{W}}  
\newcommand{\Xcal}{\mathcal{X}}  
\newcommand{\Ccal}{\mathcal{C}}

\newcommand{\Ncal}{\mathcal{N}}  \newcommand{\Nb}{\mathbb{N}}

\newcommand{\I}{\textrm{I}}
\newcommand{\II}{\textrm{II}}

\swapnumbers

\theoremstyle{plain}
\newtheorem{theorem}{Theorem}[section]
\newtheorem{lemma}[theorem]{Lemma}
\newtheorem{prop}[theorem]{Proposition}
\newtheorem*{prop*}{Proposition}
\newtheorem{cor}[theorem]{Corollary}

\theoremstyle{definition}
\newtheorem{definition}[theorem]{Definition}

\newtheorem{remark}[theorem]{Remark}
\newtheorem{example}[theorem]{Example}
\newtheorem{assumption}[theorem]{Assumption}
\newtheorem{construction}[theorem]{Construction}
\newtheorem{notation}[theorem]{Notation}

\newcommand{\cref}[1]{\zcref[cap=true]{#1}}

\newcommand{\DeclareZCTheoremType}[5]{%
  \zcRefTypeSetup{#1}{%
    Name-sg = #2,
    name-sg = #3,
    Name-pl = #4,
    name-pl = #5,
  }%
  \AddToHook{env/#1/begin}{%
    \zcsetup{countertype={theorem=#1}}%
  }%
}

\zcRefTypeSetup{theorem}{
  Name-sg = Theorem,
  name-sg = theorem,
  Name-pl = Theorems,
  name-pl = theorems,
}

\DeclareZCTheoremType{lemma}
  {Lemma}{lemma}
  {Lemmas}{lemmas}

\DeclareZCTheoremType{prop}
  {Proposition}{proposition}
  {Propositions}{propositions}

\DeclareZCTheoremType{cor}
  {Corollary}{corollary}
  {Corollaries}{corollaries}

\DeclareZCTheoremType{conj}
  {Conjecture}{conjecture}
  {Conjectures}{conjectures}

\DeclareZCTheoremType{definition}
  {Definition}{definition}
  {Definitions}{definitions}

\DeclareZCTheoremType{remark}
  {Remark}{remark}
  {Remarks}{remarks}

\DeclareZCTheoremType{example}
  {Example}{example}
  {Examples}{examples}

\DeclareZCTheoremType{assumption}
  {Assumption}{assumption}
  {Assumptions}{assumptions}

\DeclareZCTheoremType{construction}
  {Construction}{construction}
  {Constructions}{constructions}

\DeclareZCTheoremType{notation}
  {Notation}{notation}
  {Notations}{notations}

\DeclareZCTheoremType{npar}
  {Paragraph}{paragraph}
  {Paragraphs}{paragraphs}

   \tikzset{ito/.style={hook}}
   
   \tikzset{ito/.style={-triangle}}

   \newcommand{\cto}{\rightarrowtail}
   \tikzset{cto/.style={>->}}
   
   \newcommand{\acyc}{\sim}

   \newcommand{\ano}{a}

   \tikzset{acto/.style={cto,"\ano"}}
   
   \tikzset{tcto/.style={cto,"\acyc"}}

   \tikzset{fto/.style={->>}}

   \tikzset{afto/.style={fto,"\ano"}}
   
   \tikzset{tfto/.style={fto,"\acyc"}}

\makeatletter
\def\namedlabel#1#2{\begingroup
    #2%
    \def\@currentlabel{#2}%
    \phantomsection\label{#1}\endgroup
}
\makeatother

\newcommand{\id}{\text{Id}}

\newcommand{\op}{\text{op}}

\newcommand{\sSet}{\text{sSet}}

\DeclareMathOperator*{\colim}{Colim}

\DeclareMathOperator{\Fun}{Fun}

\newcommand{\scheme}{collapsing scheme\xspace}
\newcommand{\schemes}{collapsing schemes\xspace}

\begin{document}

\pagestyle{plain}
\title{Rewriting and presentations of quasicategories}

\date{}

\author{Simon Henry}


\maketitle

\begin{abstract} We show that the methods of rewriting theory to establish coherence theorems can be applied at the level of quasicategories. More precisely, for any category $C$ which admits a presentation by a convergent rewrite system, we show that the corresponding weak $(\infty,1)$-category admits an Anick-Groves-Squier style presentation whose generators correspond to (some of) the critical branchings of the rewrite system. This follows entirely from reinterpreting K. Brown's simplicial proof of the usual homological Anick-Groves-Squier presentation in terms of the Joyal model structure instead of the Kan-Quillen model structure. We give several applications of this to the theory of quasicategories, including a relatively general coherence theorem for loop-free planar category theoretic diagrams and a new proof that pushout of Dwyer maps are homotopy pushouts.
\end{abstract}

\vspace{-0.2cm}

\renewcommand{\thefootnote}{\fnsymbol{footnote}} 
\footnotetext{\emph{2020 Mathematics Subject Classification.} 68Q42, 18G30 \emph{Keywords.} Rewriting , quasicategories}
\footnotetext{\emph{email:} shenry2@uottawa.ca}
\renewcommand{\thefootnote}{\arabic{footnote}}


\tableofcontents

\section{Introduction}

The Anick-Groves-Squier theorem \cite{squier1987word},\cite{anick1986homology},\cite{groves2006rewriting},\cite{kobayashi1990complete} is a classical result in the theory of rewrite systems which, given a monoid $M$ presented by a convergent rewrite system (see section 2), produces a description of the homology of $M$ as the homology of a chain complex
\[ C_0 \leftarrow C_1 \leftarrow \dots\leftarrow  C_n \leftarrow \dots\]
where each $C_i$ is a free abelian group with an explicit set of generators. More precisely $C_0$ is free on the set of generators of the rewrite system, $C_1$ is free on its set of rewrite rules, $C_2$ is free on the set of critical pairs, and $C_n$ for $n \geqslant 2$ is free on a certain set of ``critical $(n-1)$-branchings\footnote{In practice, different references have used slightly different presentations with different sets of generators - we address this in \cref{sec:weak_vs_strong_criticallity}}''.

In \cite{brown1992geometry}, this was extended by K.S. Brown from a presentation of the homology of $M$ to what is essentially a cellular presentation of the classifying space of $M$, which allows us to recover the Anick-Groves-Squier theorem essentially by taking cellular homology.

More precisely, Brown's proof works by considering the simplicial nerve of $M$ and using what we would nowadays call Forman's discrete Morse theory \cite{forman1998morse}, \cite{forman2002user}, or Sean Moss' ``Anodyne presentations'' (\cite{moss2015another}) to collapse the simplicial nerve to a fairly small set of ``critical cells'' in $M$ that essentially corresponds to the critical branchings.

In the present note we observe that the exact same collapsing scheme given by Brown in \cite{brown1992geometry} can be applied more generally to categories instead of monoids, and more importantly already works at the level of quasicategories, and not just when passing to their classifying spaces. That is, Brown's construction can be used to promote a presentation of a category by a convergent rewrite system into a ``homotopy coherent presentation'' of the same category as an $(\infty,1)$-category, just in terms of the critical branchings of the rewrite system. More precisely our main theorem can be read as:

\begin{theorem}\label{main_th_intro}
  Let $\Ccal$ be a category presented by a convergent rewrite system. Then in the $\infty$-category of $\infty$-categories, $\Ccal$ can be realized as the colimit of a sequence
  \[\varnothing \to X_0 \to X_1 \to X_2 \to \dots \to X_n \to \dots \]
  Where for each $n$, $X_n$ is obtained from $X_{n-1}$ as a pushout
  \[\begin{tikzcd}
    C_n \times \partial \Delta_n \ar[r] \ar[d] \ar[dr,phantom,"\ulcorner"{description,very near end}] & C_n \times \Delta_n \ar[d] \\
    X_{n-1} \ar[r] & X_n   
    \end{tikzcd}\]
  where $C_n$ is a concretely defined subset\footnote{It is possible to take $C_n$ to be the set of all critical branchings, see the discussion in \cref{sec:weak_vs_strong_criticallity}, especially \cref{cor:weakly_crit_scheme}.} of the set of critical $(n-1)$-branchings of the rewrite system. 
\end{theorem}

To be more precise:

\begin{enumerate}
\item \cref{prop:Brown_collaps} gives a \scheme on $N(\Ccal)$ as a quasicategory.
\item \cref{rk:scheme_give_presentation} explains how such \schemes produce a homotopy presentation as in \cref{main_th_intro}.
\item \cref{sec:weak_vs_strong_criticallity} clarifies the connection between the critical cells (i.e. the generators) of the \scheme on $N(\Ccal)$ from \cref{prop:Brown_collaps} and critical branching of the rewrite system. It turns out that there are at least two different notions of critical branching used in the literature, which we refer to as ``weakly critical'' and ``strongly critical'' branchings, the \scheme from \cref{prop:Brown_collaps} uses the strongly critical branching, but it can be modified to use the weakly critical ones (see \cref{cor:weakly_crit_scheme}).
\end{enumerate}

In particular, the more precise version of the theorem produces an explicit combinatorial description of the set $C_n$ as well as of the ``boundary map'' $C_n \times \partial \Delta[n] \to X_{n-1}$ - mostly obtained from the definition of Brown's \scheme in \cref{def:Browm_collapsing} - though in practice the boundary maps can be complicated to unpack; the reader can have a look at \cref{ex:N2} and \cref{ex:N3} for examples of unpacking these computations in relatively simple cases.  These computations of the boundary maps are difficult to do in a systematic way. We consider giving a better description of these boundary maps (potentially using a different framework than quasicategories) to be an important problem. In the meantime we give applications to quasicategory theory by mostly focusing on the cases where there are few or even no critical branchings, or applications where we do not need to know what are the boundary maps.

\bigskip

We should also mention \cite{guiraud2012higher} which does essentially the same as our main result but in the context of strict $(\infty,1)$-categories (with strict inverses). The version of these results in the linear setting (for homological computation or presentations of dg-categories) are also well known, and linear rewriting is closely related to the theory of Gr\"obner basis. While these methods have been well exploited in the linear setting (for homological computation, or in the theory of dg-algebras and dg-categories), the fact that this can be done directly at the (non-linear) $\infty$-categorical level seems to have been overlooked - probably because quasicategories were not yet a mainstream object of study at the time Brown's paper \cite{brown1992geometry} was written. However, these constructions have many nice applications to the theory of $\infty$-categories. The second half of the paper is devoted to presenting some of these applications (\cref{sec:example_main}, \cref{sec:Dwyer_maps} and \cref{sec:example_diag}). We present both applications to concrete quasicategories as well as more abstract results applicable to general $(\infty,1)$-categories. 

\bigskip

For example, in \cref{sec:Dwyer_maps} we give a new proof of the fact that pushouts of Dwyer maps between categories are homotopy pushouts of $\infty$-categories (originally proved in \cite{hackney2024pushouts}). In \cref{sec:example_diag} we use these methods to obtain a coherence theorem for planar loop-free diagrams in quasicategories. Informally we justify that for planar loop-free diagrams there is no difference the $1$-category presented by making each inner face commutative and the $\infty$-category presented by making each inner face commutes up to an (invertible) 2-cell. This result can be used as a metatheorem to easily justify many diagram manipulations that we often do in $\infty$-category theory without always properly justifying them.

\section{Rewrite systems}

In this section we briefly recall the basics of rewriting theory. A rewrite system for a category $\Ccal$ is a presentation of $\Ccal$ by generators and relations, where the relations are ``oriented'', and called ``rewrite rules'':

\begin{definition}
  A \emph{rewrite system} for a category is the data of a graph $G$ and a set $R$ of pairs $(\gamma,\gamma')$ where $\gamma$ and $\gamma'$ are oriented paths in the graph with the same source and target point.
\end{definition}

Elements of $R$ are called rewrite rules and are represented as $\gamma \Rightarrow \gamma'$. Any rewrite system gives a presentation in the classical sense. Each rewrite system  $(G,R)$ present a category denoted $|G,R|$, where we first consider the free category $G^*$ on the graph $G$, i.e. the category of directed paths in $G$ with paths composition, and then we consider the quotient (as a category) which identify $\gamma$ and $\gamma'$ for each rule $\gamma \Rightarrow \gamma'$.

\begin{notation}\label{notation:reverse_order}
  When discussing rewrite systems, composites of paths will be represented in diagrammatic order instead of the usual category-theoretic order. i.e. in a graph
  \[\bullet \overset{a}\to \bullet \overset{b}\to \bullet \]
  the composite of $a$ and $b$ will be $ab$ and not $ba$. The reason for this is because latter we will use the nerve of these category and we want that face maps to be easy to write down as
  \[ d_i(\gamma_0,\dots,\gamma_n) = ( \gamma_0, \dots, \gamma_i \gamma_{i+1}, \dots, \gamma_n) \]
  which require to either use the diagramatic composition order or to write the simplicies in the reverse ordre.
\end{notation}

\begin{remark}
  Formally a rewrite system is the same as a $2$-computad (or $2$-polygraph) in the sense of \cite{street1976limits}, but we will not really make use of this point of view in the present note.
\end{remark}

\begin{example}\label{ex:squares}
  The graph
 \[ \begin{tikzcd}
    \bullet \ar[r,"f"] \ar[d,"h"swap] & \bullet \ar[d,"g"] \\
    \bullet \ar[r,"k"swap] & \bullet
  \end{tikzcd} \]
with the rewrite rule $hk \Rightarrow fg$ is a presentation of the category $[1] \times [1]$.
More generally, the graph
\[ \begin{tikzcd}
  x_{0,0} \ar[r,"a_{0,1}"] \ar[d,"b_{1,0}"] &   x_{0,1} \ar[r,"a_{0,2}"] \ar[d,"b_{1,1}"] & \dots \ar[r,"a_{0,n}"]  & x_{0,n} \ar[d,"b_{1,n}"] \\
  x_{1,0} \ar[r,"a_{1,1}"] \ar[d,"b_{2,0}"] &   x_{1,1} \ar[r,"a_{1,2}"] \ar[d,"b_{2,1}"] & \dots \ar[r,"a_{1,n}"] & x_{1,n} \ar[d,"b_{2,n}"] \\
  \vdots \ar[d,"b_{m,0}"] & \vdots \ar[d,"b_{m,1}"] & \ddots & \vdots \ar[d,"b_{m,n}"] \\
  x_{m,0} \ar[r,"a_{m,1}"] & x_{m,1} \ar[r,"a_{m,2}"] & \cdots \ar[r,"a_{m,n}"] &  x_{m,n}
\end{tikzcd}\]
with one rewrite rule per small square
\[ a_{i-1,j} b_{i,j}  \Rightarrow b_{i,j-1} a_{i,j}  \]
is a presentation of the category $[n] \times [m]$, where $[n]$ denotes the linear order $\{0 < 1 < \dots < n \}$.
\end{example}

\begin{example}\label{ex:Nk}
The graph with $k$ loops
\[\begin{tikzpicture}
    \node (v) {%
        $\begin{tikzcd}[ampersand replacement=\&,rotate=60]
            \bullet
            \arrow[loop, in=165, out=105, distance=3em, "a_1"swap]
            \arrow[loop, in=85,  out=25,  distance=3em, "a_2"swap]
            \arrow[loop, in=-35, out=-95, distance=3em, "a_k"']
        \end{tikzcd}$
    };
    \foreach \ang in {25, 0, -25} {
        \node at ($(v)+(0.3,0.1)+({0.35*cos(\ang+45)},{0.35*sin(\ang+45)})$) {$\cdot$};
    }
\end{tikzpicture}\]
and all the rewrite rules
\[ a_j a_i \Rightarrow a_i a_j \text{ ($ \forall i <j$)}\]
is a presentation of the category $B\Nb^k$ with a single object and the monoid $\Nb^k$ as its endomorphisms.
\end{example}

\begin{notation}
  Given a path $p$ in $G$, by a \emph{subpath} of $p$ we mean a decomposition of $p$ as $p=u f v$. We will refer to $f$ as being the subpath by an abuse of notation, but we still consider the ``position'' of $f$ in $p$ (that is the data of $u$ and $v$) as being important. That is if $p = abcab$ then there are two different subpaths $ab$ in $p$, $p=(ab)cab$ and $p=abc(ab)$.

  We will call a \emph{subrule} of $p$ any subpath of $p$ which is also the domain of a rule.
\end{notation}

\begin{definition} Given a rewrite system $(G,R)$:
  \begin{enumerate}
  \item We say that a rewrite rule $r: \gamma \Rightarrow \gamma'$ applies to a path $p$ if $p$ has $\gamma$ as a subpath (i.e. subrule), i.e. $p = u \gamma v$.
  \item We say that $p \Rightarrow q$ is a one step reduction (along $r$) if $p = u \gamma v$ and $q= u \gamma' v$ for $r:\gamma \Rightarrow \gamma'$ one of our rewrite rules.
  \item We say that \emph{$p$ reduces to $q$}, and write $p \Rightarrow^* q$ if there is a finite sequence of one step reduction along various rewrite rules in $R$, $p \Rightarrow p_1 \Rightarrow \dots \Rightarrow p_n=q$.
  \item We say that a path $p$ is \emph{normal} if no rule can be applied to $p$ ($p$ has no subrules).
  \item We say that a rewrite system is \emph{confluent} if any time a path $p$ can be reduced in two different ways $p \Rightarrow^* q_1$ and $p \Rightarrow^* q_2$, there is path $p'$ such that $q_1 \Rightarrow p'$ and $q_2 \Rightarrow p'$.
  \item We say that a rewrite system is \emph{terminating} if there are no infinite sequences of reduction $p \Rightarrow p_1 \Rightarrow p_2 \Rightarrow \dots \Rightarrow p_n \Rightarrow \dots$.
  \item We say that a rewrite system is \emph{convergent} (sometimes also called \emph{complete}) if it is both confluent and terminating.
  \end{enumerate}
\end{definition}

It is easy to see from these definitions that:

\begin{lemma}
  \begin{enumerate}
  \item If $(G,R)$ is a terminating rewrite system then any path $\gamma$ reduces to at least one normal path.
  \item If $(G,R)$ is a confluent rewrite system then any path $\gamma$ reduces to at most one normal path.
  \item If $(G,R)$ is convergent then any path reduces to a unique normal path. This induces a bijection between the set of normal paths in $G$ and the set of arrows of the category presented by $(G,R)$.
  \end{enumerate}
\end{lemma}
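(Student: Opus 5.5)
For (1), I would argue by well-founded induction, which termination licenses: the relation ``$p$ reduces in one step to $q$'' has no infinite descending chains. Suppose some path $\gamma$ reduces to no normal path. Then $\gamma$ is not itself normal, so some rule applies and $\gamma \Rightarrow \gamma_1$. The path $\gamma_1$ again reduces to no normal path, since any reduction $\gamma_1 \Rightarrow^* q$ with $q$ normal would give $\gamma \Rightarrow^* q$. Iterating, with dependent choice, produces an infinite sequence $\gamma \Rightarrow \gamma_1 \Rightarrow \gamma_2 \Rightarrow \dots$, which contradicts termination.

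For (2), suppose $\gamma \Rightarrow^* q_1$ and $\gamma \Rightarrow^* q_2$ with $q_1, q_2$ normal. Confluence gives a path $p'$ with $q_1 \Rightarrow^* p'$ and $q_2 \Rightarrow^* p'$. A normal path admits no one-step reduction, so any reduction sequence starting at it has length zero. Hence $q_1 = p' = q_2$.

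For (3), existence and uniqueness of the normal form $\mathrm{nf}(\gamma)$ follow from (1) and (2). For the bijection, I would first show that $\gamma \mapsto \mathrm{nf}(\gamma)$ is constant on the classes of the congruence generated by $R$, which are the arrows of $|G,R|$. Two steps are needed:
\begin{itemize}
\item If $\gamma \Rightarrow \gamma'$ is a one-step reduction, then $\gamma' \Rightarrow^* \mathrm{nf}(\gamma')$ gives $\gamma \Rightarrow^* \mathrm{nf}(\gamma')$, so uniqueness yields $\mathrm{nf}(\gamma) = \mathrm{nf}(\gamma')$.
\item The equivalence relation on parallel paths generated by one-step reductions is already closed under pre- and post-composition, since $u\gamma v \Rightarrow u\gamma' v$ is again a one-step reduction. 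It is therefore exactly the congruence defining the quotient $|G,R|$.
\end{itemize}
Consequently $\mathrm{nf}$ descends to a map from arrows of $|G,R|$ to normal paths.

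The inverse map sends a normal path to its class. The two composites are the identity: a normal path is its own normal form, and any $\gamma$ is equivalent to $\mathrm{nf}(\gamma)$ because $\gamma \Rightarrow^* \mathrm{nf}(\gamma)$.

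The only point needing care is the second step above, namely identifying the generated congruence with the zigzag closure of one-step reductions. Once that is checked, the rest is formal.
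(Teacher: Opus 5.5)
Your proof is correct and is the routine argument the paper has in mind when it says the lemma is ``easy to see from these definitions''; the paper writes no proof. You (rightly) read $\Rightarrow^*$ as reflexive and the confluence condition with $\Rightarrow^*$ rather than the single arrows in the paper's definition, which is the intended meaning, and your identification of the generated congruence with the zigzag closure of one-step reductions correctly fills in the one step the paper leaves implicit.
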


\begin{example}
The rewrite system form \cref{ex:squares} and  \cref{ex:Nk} are all convergent.  
\end{example}

Critical pairs are paths in the graph to which two rules can be applied in a way that is both overlapping and minimal. For example, in the system of \cref{ex:Nk}, $(a_3a_2)a_1 = a_3(a_2a_1)$ is a critical pair as we apply both rules $a_3a_2 \Rightarrow a_2 a_3$ and $a_2 a_1 \Rightarrow a_1 a_2$, they overlap on $a_2$ and this configuration is minimal. But $a_2a_1a_2a_1$ is not critical because the two rules are not overlapping and $a_3a_2a_1a_1$ is not critical because the configuration is not minimal as the last $a_1$ can be removed and both rules can still be applied. More explicitly:

\begin{definition}
  If $(G,R)$ is a rewrite system. A \emph{critical pair} is a pair of rules $R_1: \gamma_1 \Rightarrow \gamma'_1$ and $R_2: \gamma_2 \Rightarrow \gamma'_2$ and decompositions of the form $\gamma_1 = va$ and $\gamma_2=aw$ with $a$ a non-trivial path.
\end{definition}

We generally denote such a critical pair by ``$vaw$'' but it is sometimes necessary to clarify which rules can be applied by writing for example $(va)w = v(aw)$.

A central result in the field is:

\begin{lemma}[Newman]\label{lem:newman}
  A terminating rewrite system $(G,R)$ is confluent if and only if for each critical pair whose two one-step reductions are $\psi \Rightarrow \gamma$ and $\psi \Rightarrow \gamma'$, there is a path $\gamma''$ such that $\gamma \Rightarrow^* \gamma''$ and $\gamma' \Rightarrow^* \gamma''$. 
\end{lemma}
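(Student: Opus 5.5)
The forward direction is immediate. If $(G,R)$ is confluent, then the two one-step reductions $\psi \Rightarrow \gamma$ and $\psi \Rightarrow \gamma'$ of a critical pair are in particular reductions $\psi \Rightarrow^* \gamma$ and $\psi \Rightarrow^* \gamma'$, so confluence provides the required common reduct $\gamma''$. All the work is in the converse. I would organise it in two steps: first show that local joinability of critical pairs implies \emph{local confluence} (any two one-step reductions of the same path are joinable), and then show that local confluence plus termination gives confluence by well-founded induction.

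\textbf{Step 1: local confluence.} Let $p \Rightarrow q_1$ along $r_1:\gamma_1\Rightarrow\gamma_1'$ at $p=u_1\gamma_1v_1$, and $p\Rightarrow q_2$ along $r_2:\gamma_2\Rightarrow\gamma_2'$ at $p=u_2\gamma_2v_2$. I compare the positions of the two subpaths and split into three cases.
\begin{enumerate}
\item \emph{Disjoint occurrences}, say $p = u\gamma_1 w \gamma_2 v$. Both $q_1$ and $q_2$ reduce in one step to $u\gamma_1' w\gamma_2' v$.
\item \emph{One occurrence contains the other}, say $\gamma_1 = x\gamma_2 y$. This is where the definition of critical pair matters. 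The definition only asks for $\gamma_1 = va$ and $\gamma_2 = aw$ with $a$ non-trivial, and $v$ or $w$ may be empty. Hence an inclusion in which the inner occurrence shares an endpoint with the outer one is already a critical pair. A strictly interior inclusion, with $x,y$ both non-empty, is not literally of that form. I would either read the definition as including such inclusions, which is the standard convention, or note that the statement intends this and treat them as critical pairs as well. Given that reading, the hypothesis gives a common reduct of $\gamma_1'$ and $x\gamma_2'y$. Reductions are compatible with whiskering, that is, $s\Rightarrow^* t$ implies $usv\Rightarrow^* utv$. Whiskering the common reduct by the context of $\gamma_1$ in $p$ therefore joins $q_1$ and $q_2$.
\item \emph{Proper overlap}, where $\gamma_1 = va$ and $\gamma_2 = aw$ overlap on a non-trivial $a$. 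Then $p = u\,vaw\,v'$ with $vaw$ a critical pair. We join its two reducts using the hypothesis and whisker by $u$ and $v'$.
\end{enumerate}

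\textbf{Step 2: Newman's argument.} By termination, the relation ``$p \Rightarrow^+ q$'' (reduction in at least one step) is well-founded on paths: an infinite descending chain would give an infinite reduction sequence. I prove by well-founded induction on $p$ that any two reductions $p\Rightarrow^* q_1$ and $p\Rightarrow^* q_2$ are joinable. If either reduction is empty, the claim is trivial. Otherwise write $p\Rightarrow p_1\Rightarrow^* q_1$ and $p\Rightarrow p_2\Rightarrow^* q_2$, and argue as follows.
\begin{enumerate}
\item Local confluence gives $s$ with $p_1\Rightarrow^* s$ and $p_2\Rightarrow^* s$.
\item The induction hypothesis at $p_1$ joins $q_1$ and $s$ at some $t$.
\item The induction hypothesis at $p_2$ joins $t$ and $q_2$ at some $t'$, using $p_2\Rightarrow^* s \Rightarrow^* t$.
\end{enumerate}
Then $t'$ is a common reduct of $q_1$ and $q_2$.

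I expect the main obstacle to be the careful case analysis in Step 1, and in particular making sure the inclusion case is actually covered by the paper's definition of critical pair. The remaining cases and Step 2 are routine.
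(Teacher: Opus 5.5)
Your proof is correct, and it is the standard argument. The paper gives no proof of its own; it only cites Newman and Huet. Your Step~2 is exactly Huet's well-founded induction, and Step~1 is the usual critical-pair lemma that the paper's formulation folds into the statement.

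Your hesitation in Case~2 is justified, and the inclusive reading is forced rather than a matter of convention. With the paper's literal definition (only $\gamma_1 = va$, $\gamma_2 = aw$), the converse direction is false. Take a graph with one vertex and loops $x,y,z,s,t$, with rules $xyz \Rightarrow s$ and $y \Rightarrow t$.
\begin{enumerate}
\item Every step either shortens the path, or keeps its length and removes a $y$, so the system terminates.
\item A non-trivial suffix of one domain equals a prefix of another domain only when a domain is matched with itself as a whole. So every literal critical pair has two identical reducts, and the literal hypothesis holds.
\item Yet $xyz$ reduces to the two distinct normal paths $s$ and $xtz$, so the system is not confluent.
\end{enumerate}
So strictly interior inclusions must be counted as critical pairs, exactly as you propose. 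With that convention your case analysis is complete.
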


See \cite{newman1942theories} for the original reference and \cite{huet1980confluent} for a simpler proof. We say that a rewrite system is locally confluent if it satisfies the assumption of Newman's lemma on critical pairs (but might not be terminating).

We conclude by the following example which show that every category can be canonically presented by a convergent rewrite system:

\begin{example}\label{ex:std_rewrite} If $\Ccal$ is any category, it can be presented by the ``standard rewrite system'':

  \begin{itemize}
  \item The graph $G$ is the underlying graph of $\Ccal$ (one generator for each arrow if $\Ccal$). For $f$ an arrow in $\Ccal$, we write $[f]$ for the corresponding edge of $G$.
  \item For each composable pairs of arrow of $f,g$ in $\Ccal$, we have a rule $[f][g] \Rightarrow [fg]$.
   \item We also have a rewrite rule $[\id_x] \Rightarrow 1_x$ for each identity arrow $\id_x$.
  \end{itemize}

  The system is clearly terminating because each rule reduces the length of the word and we can check confluence using Newman's lemma, as the critical pairs are:

  \begin{enumerate}
  \item $[f][g][k] \Rightarrow [f][gk] , [fg][k]$ and both reduce to $[fgk]$.
  \item $[f][\id_x] \Rightarrow [f \id_x]=[f], [f]$ which are already the same.
  \item $[\id_x][f]$ which behaves exactly the previous one.
  \end{enumerate}

Hence this is a terminating rewrite system. The normal cells are the trivial paths $1_x$ and the $[f]$ for $f$ a non-identity arrow of $\Ccal$. So this is a presentation of $\Ccal$. 
\end{example}

\begin{remark}
  While the previous example shows that for a category, having a presentation by a convergent rewrite system is not a special property at all, having smaller rewrite system than this one is a special property. Notably having a presentation by a finite convergent rewrite system is a special property amongst finitely presented categories: One of the important consequences of the classical Anick-Groves-Squier theorem is that if a monoid $\Mcal$ (and more generally a category $\Ccal$) has a presentation by a finite rewrite system, then all the homology\footnote{These can be defined as the homology group of the simplicial nerve of $\Mcal$.} groups of $\Mcal$ are finitely generated. This is not true in general for a finitely presented monoid or category.
\end{remark}

\section{Quasicategorical \scheme}

In this section we introduce an analogue of Forman's Discrete Morse theory \cite{forman2002user}, or Sean Moss' anodyne extension \cite{moss2015another} adapted to the Joyal model structure on quasicategories. Essentially these are structures on simplicial sets that allow us to keep track of how it could have been built from a small number of ``critical cells'', together with pushouts of inner horn inclusions. The construction by Brown of such a structure on $N|G,R|$ for $(G,R)$ a convergent rewrite system is how the main result of this paper is established.

\begin{definition}\label{def:qcat_scheme} Let $i:A \cto X$ be a cofibration of simplicial sets. A quasicategorical \emph{\scheme} on $i$ is the data of:

  \begin{itemize}
  \item A partition of the set of non-degenerated cells of $X$ not in $A$ into three classes: the  type $\I$ cells, the type $\II$ cells and the critical cells.
  \item A bijection $c$ from the set of type $\I$ cells to the set of type $\II$ cells.
  \end{itemize}

  Such that:

  \begin{enumerate}
  \item For each type $\I$ cell $x$, there is a unique $i$ such that $c(x) = d_i x$, and moreover $0<i<n$.
  \item The order relation on the set of type $\II$ cells of $X$ generated by $x \leqslant y$ if $x = d_j c^{-1}(y)$ is well-founded.
  \end{enumerate}

  A \scheme of a simplicial set $X$ is a \scheme of the cofibration $\varnothing \to X$.
\end{definition}

\begin{remark}
  Every cofibration has a ``trivial'' \scheme where every cell is critical. A \scheme on $A \cto B$ is the same as a \scheme on $B$ such that all cell in $A$ are critical.
\end{remark}

\begin{remark}
  In Brown's terminology from \cite{brown1992geometry}, critical cells are called \emph{essentials}, type $\I$ cells are called \emph{collapsible} and type $\II$ cells are called \emph{redundant}. We use the terminology coming from Moss' \cite{moss2015another} instead.
\end{remark}

The main reason for this definition is that this is the structure that exactly captures constructing simplicial sets by iterated pushout of inner horn inclusions and boundary inclusions:

\begin{prop}
  A cofibration $A \cto B$ admits a quasicategorical \scheme with given set of critical cells, type $\I$ and type $\II$ cells if and only if it is a transfinite composition of pushouts of maps:
  \[ \partial \Delta [n] \cto \Delta[n]  \qquad \text{ and }\qquad \Lambda^k[n] \cto \Delta[n]\]
  such that
  \begin{enumerate}
  \item The critical cells of $B$ are exactly the non-degenerate cells added by pushouts of $\partial \Delta[n] \cto \Delta[n]$.
  \item The type $\I$ cells of $B$ are exactly the cells added as the $n$-dimensional cell of $\Delta[n]$ in a pushout of $\Lambda^k[n] \cto \Delta[n]$.
   \item The type $\II$ cells of $B$ are exactly the cells added as the $k$-th $(n-1)$-dimensional face of $\Delta[n]$ in a pushout of $\Lambda^k[n] \cto \Delta[n]$
  \end{enumerate}
\end{prop}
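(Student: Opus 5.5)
Both directions are direct translations between cell attachments and the data of a \scheme. The well-foundedness condition of \cref{def:qcat_scheme} is exactly what lets us order the attachments.

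\emph{From a transfinite composition to a \scheme.} Suppose $A \cto B$ is a transfinite composition of pushouts of boundary inclusions and horn inclusions $\Lambda^k[n] \cto \Delta[n]$. Each pushout adds new non-degenerate cells:
\begin{itemize}
\item a boundary pushout adds exactly one, the top cell, which we declare critical;
\item a horn pushout adds exactly two, the top $n$-cell $x$ (type $\I$) and its $k$-th face $d_k x$ (type $\II$), and we set $c(x)=d_kx$.
\end{itemize}
We must check the following:
\begin{itemize}
\item These new cells are non-degenerate and not previously present, because the attaching maps are pushouts along monomorphisms, so exactly the cells of $\Delta[n]$ not in the source are added.
\item Every non-degenerate cell of $B$ not in $A$ arises exactly once, so we get a partition and a bijection.
\item Uniqueness of $i$ with $c(x)=d_ix$ holds: the other faces $d_jx$, $j\neq k$, lie in the image of the horn and hence were present before, while $d_kx$ is new. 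So $d_jx \neq d_kx$ for $j \neq k$.
\item $0<k<n$, since all horns are inner.
\end{itemize}
For well-foundedness, index each type $\II$ cell by the ordinal stage at which it was added. If $x=d_jc^{-1}(y)$ with $x\neq y$, then $x$ is a face in the horn of $c^{-1}(y)$, hence was added strictly earlier. So the generated order maps into the ordinals strictly increasingly and is well-founded. (Faces of a critical cell are not involved in the order, so nothing more is needed.)

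\emph{From a \scheme to a transfinite composition.} This direction is the harder one, and I would run it by induction on dimension and then on rank. Define the rank of a type $\II$ cell $y$ as its ordinal height in the well-founded order, and give $c^{-1}(y)$ the same rank. Note that $y=d_kc^{-1}(y)$ has dimension $n-1$ where $n=\dim c^{-1}(y)$. Order the cells to be attached lexicographically:
\begin{itemize}
\item critical $n$-cells and type $\I$ $(n)$-cells come after everything of dimension below $n$ (and pairs $(x,c(x))$ sit at the dimension of $x$);
\item within a dimension, all critical cells are attached first, and then the pairs $(c^{-1}(y),y)$ in increasing rank, in arbitrary order within a rank (simultaneous pushouts are fine).
\end{itemize}
Let $B_\alpha$ be the simplicial subset generated by $A$ and the cells attached so far. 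Then:
\begin{itemize}
\item For a critical $n$-cell, its boundary consists of cells of dimension $<n$. Those are either in $A$ or already attached, since type $\II$ $(n-1)$-cells pair with type $\I$ $n$-cells, which fall in dimension-$n$ stage \dots
\end{itemize}

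Here is the main obstacle: a type $\II$ $(n-1)$-cell lying in the boundary of a critical $n$-cell has to be attached before it. I would fix this by attaching the pair $(x,c(x))$ at stage $\dim c(x) = n-1$, i.e. before the dimension-$n$ critical cells. The order is then:
\begin{itemize}
\item the critical cells of dimension $m$;
\item then the pairs whose type $\II$ cell has dimension $m$, by rank.
\end{itemize}
With this order, when attaching $(x,y)$ with $\dim x = n$, the faces $d_jx$ for $j\neq k$ have dimension $n-1$ and are of one of four kinds:
\begin{itemize}
\item in $A$;
\item critical, hence already attached;
\item type $\I$, hence paired with a type $\II$ cell of dimension $n-2$, already attached;
\item type $\II$ of strictly smaller rank, by definition of the order, hence already attached.
\end{itemize}
In each case the face is already present, as are all lower faces, possibly after closing under degeneracies. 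So $x$ attaches along $\Lambda^k[n]\to B_\alpha$, and $d_kx=y$ is new by the uniqueness condition. This gives a pushout of an inner horn.

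Finally, non-degeneracy plus the partition ensures $B=\colim B_\alpha$ and that each stage adds exactly the intended cells. That verifies (1)–(3).
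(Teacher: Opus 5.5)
Your proof is correct. It is essentially the argument the paper defers to (Section 2 of Moss, adapted to inner horns and critical cells): one direction tags cells by the stage at which they are attached, and the other attaches cells by dimension and then by well-founded rank, placing each type $\II$ cell together with its type $\I$ partner at the dimension of the type $\II$ cell.

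Your corrected ordering is exactly the filtration $A^{(n)} \subseteq A^{(n+)} \subseteq A^{(n+1)}$ that the paper itself uses in \cref{rk:scheme_give_presentation} and \cref{lem:bij_on_crit}. With it, the boundary check for critical cells that you left unfinished is immediate, since every non-degenerate $(n-1)$-cell is already present by the end of stage $n-1$.
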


\begin{proof}
  See for example Section 2 of \cite{moss2015another}. The statement there use arbitrary Horn inclusion instead of inner horn inclusions and has no critical cells, but the proof is exactly the same.
\end{proof}

For example, a cofibration with a \scheme with no critical cells is inner anodyne, and every inner anodyne cofibration is a retract of a cofibration with a \scheme having no critical cells.

In some cases it will be useful to ``restrict'' \schemes:

\begin{definition} Given $A \cto B$ a cofibration. A \scheme on $B$ is said to restrict to $A$ if the bijection between type $\I$ and type $\II$ cells as well as its inverses preserves $A$.\end{definition}

The following is immediate:

\begin{lemma}\label{lemma:restrict_scheme}
  If a \scheme on $B$ restricts to a subsimplicial set $A \overset{i}{\cto} B$ then:
  \begin{enumerate}
  \item We have a \scheme on $A$ which is the restriction of the one on $B$.
  \item We have a \scheme on the cofibration $i$ which is the restriction of the one on $B$.
  \end{enumerate}  
Where by ``the restriction of the one on $B$'' we mean that a cell is type $\I$, resp. type $\II$, resp. critical if and only if it is in $B$, and the bijection $c$ is the restriction of the one on $B$.
\end{lemma}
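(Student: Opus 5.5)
We check the axioms of \cref{def:qcat_scheme} directly for the two restricted structures. Note first that the final sentence of the statement should read ``if and only if it is in $A$'' for part (1), and ``in $B$ but not in $A$'' for part (2).

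\emph{Part (1).} Declare a non-degenerate cell of $A$ to be type $\I$, type $\II$ or critical exactly when it is so in $B$. This is a partition of the non-degenerate cells of $A$, since non-degeneracy is the same in $A$ and in $B$. Because the scheme restricts to $A$, $c$ sends type $\I$ cells of $A$ to type $\II$ cells of $A$, and $c^{-1}$ sends type $\II$ cells of $A$ to type $\I$ cells of $A$. Hence $c$ restricts to a bijection between the type $\I$ and type $\II$ cells of $A$. Condition (1) is a property of each individual type $\I$ cell $x$ and its faces, and faces of $x$ computed in $A$ agree with those computed in $B$, so it is inherited. For condition (2), the generating relation on type $\II$ cells of $A$ ($x\leqslant y$ if $x=d_j c^{-1}(y)$) is the restriction of the generating relation on $B$. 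So the order it generates is contained in the restriction of the order on $B$. Any infinite descending chain in $A$ would then be one in $B$, contradicting well-foundedness there.

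\emph{Part (2).} Classify the non-degenerate cells of $B$ not in $A$ as in $B$, and restrict $c$ to type $\I$ cells not in $A$. We must check that $c$ is a bijection onto the type $\II$ cells not in $A$. This is where the two-sided hypothesis is used: if $x\notin A$ but $c(x)\in A$, then $c^{-1}(c(x))=x\in A$, which is a contradiction. Symmetrically, if $y \notin A$ but $c^{-1}(y) \in A$, then $y = c(c^{-1}(y)) \in A$, again a contradiction. So $c$ and $c^{-1}$ also preserve the complement of $A$. Conditions (1) and (2) follow exactly as in part (1): (1) is cellwise, and (2) holds because the generated order on a subset is contained in the restriction of the ambient well-founded order.

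Nothing here is a real obstacle; the only point needing care is the complement argument for bijectivity in part (2).
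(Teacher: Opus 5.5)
Your proof is correct and is the direct verification the paper leaves as ``immediate'': the partition and the pairing are inherited from $B$, bijectivity uses that both $c$ and $c^{-1}$ preserve $A$ (and hence also preserve $B \setminus A$), and axioms (1) and (2) pass down because faces are computed the same way and a subrelation of a well-founded relation is well-founded. One small point: the last sentence of the statement is most naturally read as ``iff it is so in $B$'' rather than ``iff it is in $A$'', and ``iff it is so in $B$'' is in fact the convention your argument uses.
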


In particular:

\begin{cor}
  Let $B$ be equipped with a \scheme that restricts to a simplicial subset $A \subset B$ and $A$ contains all critical cells, then $A \to B$ is inner anodyne.
\end{cor}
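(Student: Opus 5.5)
The corollary should follow by combining \cref{lemma:restrict_scheme} with the proposition characterising \schemes as cell decompositions. First I will apply part (2) of \cref{lemma:restrict_scheme}: since the \scheme on $B$ restricts to $A$, we get a \scheme on the cofibration $A \cto B$. Its type $\I$, type $\II$ and critical cells are the non-degenerate cells of $B$ not in $A$ of the corresponding type in $B$, and its bijection $c$ is the restriction of the one on $B$.

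Next I will check that this restricted \scheme on $A \cto B$ has no critical cells. Its critical cells are the critical cells of $B$ lying outside $A$. By hypothesis every critical cell of $B$ lies in $A$, so there are none.

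Finally, I apply the proposition: $A \cto B$ is a transfinite composition of pushouts of boundary inclusions and inner horn inclusions, where the boundary inclusions correspond exactly to the critical cells. Since there are no critical cells, only pushouts of inner horn inclusions $\Lambda^k[n] \cto \Delta[n]$ with $0<k<n$ occur. A transfinite composition of such pushouts is inner anodyne by definition, since the inner anodyne maps form the saturated class generated by the inner horns. This is the same as the remark following the proposition that a cofibration with a \scheme having no critical cells is inner anodyne.

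The only point needing care is that the restricted data really is a \scheme on $A \cto B$, meaning that $c$ sends type $\I$ cells outside $A$ to type $\II$ cells outside $A$ and conversely. This holds because $c$ and $c^{-1}$ preserve $A$: if $c(x)$ were in $A$ then $x = c^{-1}(c(x))$ would be in $A$, and symmetrically. Well-foundedness passes to the subset, and the inner-face condition is inherited unchanged. This is exactly the content of \cref{lemma:restrict_scheme}, so no real obstacle remains.
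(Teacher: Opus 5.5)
Your proposal is correct and is essentially the argument the paper intends. The paper gives no separate proof: it obtains the corollary by restricting the scheme to $A \cto B$ via \cref{lemma:restrict_scheme}, noting that the restricted scheme has no critical cells, and then invoking the remark that a cofibration with a \scheme having no critical cells is inner anodyne — which is exactly your route, including your check that $c$ and $c^{-1}$ preserve the complement of $A$.
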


\begin{lemma}\label{lemma:Po_of_scheme}
  If $i:A \cto B$ is a cofibration equipped with a \scheme. Then any pushout of $i$
  \[\begin{tikzcd}
      A \ar[d,cto,"i"] \ar[r,"f"] & C \ar[d,cto,"j"] \\
      B \ar[r,"g"] & D
    \end{tikzcd}\]
is equipped with a \scheme where the type of a cell in $D-C$ is given by the type of its unique pre-image by $g$ and the pairing function is constructed in a similar way.
\end{lemma}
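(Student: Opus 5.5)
The key input is the standard description of the non-degenerate cells of a pushout along a monomorphism. Since $j$ is a cofibration and the square is a pushout, $g$ restricts to a bijection between the non-degenerate cells of $B$ not in $A$ and the non-degenerate cells of $D$ not in $C$, compatible with dimensions. This holds because pushouts of simplicial sets are computed levelwise, so $D_n = C_n \sqcup (B_n - A_n)$. A cell of $B_n - A_n$ is non-degenerate in $B$ iff its image is non-degenerate in $D$. Indeed, if $g(x) = s_i y$ then $y = d_i g(x)$, and either $y$ lies in $C$, in which case $g(x)=s_iy \in C$, a contradiction, or $y = g(y')$ with $y' \in B - A$, in which case $x = s_i y'$ by injectivity of $g$ on $B-A$. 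I will write $\bar{z} \in B-A$ for the unique preimage of a cell $z \in D - C$.

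Define the partition of the non-degenerate cells of $D - C$ by declaring $z$ critical, type $\I$ or type $\II$ according to the type of $\bar{z}$. Define $c_D(z) := g(c(\bar z))$ for $z$ of type $\I$. Since $c(\bar z)$ is type $\II$, it lies in $B - A$, so $c_D$ is a bijection from type $\I$ cells to type $\II$ cells of $D-C$; its inverse is $w \mapsto g(c^{-1}(\bar w))$. Because $g$ is a simplicial map, $c_D(z) = g(d_i \bar z) = d_i g(\bar z) = d_i z$ for the index $0<i<n$ attached to $\bar z$.

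For uniqueness of $i$, suppose $d_k z = c_D(z)$. Then $d_k z$ lies in $D-C$, so $d_k z = g(d_k \bar z)$ with $d_k\bar z \notin A$; otherwise $d_k z = f(d_k \bar z) \in C$. Injectivity of $g$ on $B-A$ gives $d_k \bar z = c(\bar z)$, hence $k=i$.

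For well-foundedness, suppose $x \leqslant_D y$ is a generating relation, i.e.\ $x = d_j c_D^{-1}(y)$ with $x,y$ type $\II$ in $D-C$. Then $x = d_j g(c^{-1}(\bar y)) = g(d_j c^{-1}(\bar y))$. Since $x \notin C$, the face $d_j c^{-1}(\bar y)$ is not in $A$, and so it equals $\bar x$. Thus $x \mapsto \bar x$ sends generating relations of the order on $D$ to generating relations of the order on $B$. Any infinite descending chain in $D$ would therefore map to one in $B$, so well-foundedness transfers.

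The only real obstacle is the bookkeeping in the first paragraph: the non-degeneracy correspondence and the fact that faces of cells outside $C$ which land outside $C$ are computed in $B-A$. The rest is formal transport along $g$.
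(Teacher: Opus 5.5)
Your proof is correct and uses the same argument as the paper. The paper only notes that $g$ induces a bijection between the non-degenerate cells of $D$ not in $C$ and those of $B$ not in $A$, and that the axioms of a \scheme only involve these cells; you have written out the checks (non-degeneracy, uniqueness of the face index, transfer of well-foundedness) that the paper calls immediate. One step is left implicit: to get $x = s_i y'$ from injectivity of $g$ on $B-A$ you need $s_i y' \notin A$, which holds because $d_i s_i y' = y' \notin A$.
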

\begin{proof}
  The fact that this is a \scheme on $j$ is immediate as the set of cells in $D$ not in $C$ is in bijection with the cells of $B$ not in $A$ and the definition of \scheme only depends on these.
\end{proof}

\begin{prop}\label{rk:scheme_give_presentation}
  Given a \scheme on $A \cto B$ we can define $A^{(n)} \subseteq B$ to be collection of:
  \begin{enumerate}
  \item All cells of $A$.
  \item All critical cell of dimension $\leqslant n$.
  \item All type $\II$ cell of dimension $<n$.
  \item All Type $\I$ cell of dimension $\leqslant n$.
  \item All degeneracies of cells mentioned above.
  \end{enumerate}

then each $A^{(n)}$ is a simplicial subset of $B$ and $B= \colim A^{(n)}$, and each $A^{(n-1)} \to A^{(n)}$ is a homotopy pushout of the form

    \[\begin{tikzcd}
    C_n \times \partial \Delta_n \ar[r,cto] \ar[d] \ar[dr,phantom,"\ulcorner"{description,very near end}] & C_n \times \Delta_n \ar[d] \\
    A^{(n-1)} \ar[r,cto] & A^{(n)}   
  \end{tikzcd}\]
where $C_n$ is the set of critical $n$-cells of $B$.
\end{prop}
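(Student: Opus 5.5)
We prove the three claims in order: that each $A^{(n)}$ is a simplicial subset, that $B$ is their union, and that each step $A^{(n-1)} \to A^{(n)}$ is a homotopy pushout. Throughout, $B = \colim A^{(n)}$ means an increasing union.

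\textbf{Each $A^{(n)}$ is a simplicial subset.} It suffices to check that every face of a non-degenerate cell listed in $A^{(n)}$ lies in $A^{(n)}$. Degeneracies are included by definition, and a face of a degenerate cell is a degeneracy of a face of a non-degenerate cell. Faces of cells of $A$ stay in $A$.

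Now take a critical or type $\II$ cell $x$ listed in $A^{(n)}$. Its dimension is $\leqslant n$, so its faces have dimension $\leqslant n-1$. Any non-degenerate cell of dimension $\leqslant n-1$ lies in $A^{(n)}$ whatever its type: type $\II$ cells need dimension $<n$, while critical and type $\I$ cells need dimension $\leqslant n$.

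Finally take a type $\I$ cell $x$ of dimension $m \leqslant n$. Its faces have dimension $m-1 \leqslant n-1$, so the same remark applies. The union is all of $B$ because every non-degenerate cell has some finite dimension and therefore appears in $A^{(n)}$ for $n$ large.

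\textbf{The cells added at stage $n$.} Passing from $A^{(n-1)}$ to $A^{(n)}$ adds three kinds of non-degenerate cells:
\begin{itemize}
\item the critical $n$-cells $C_n$;
\item the type $\I$ $n$-cells;
\item the type $\II$ $(n-1)$-cells, which are exactly the cells $c(x)$ for $x$ a type $\I$ $n$-cell, since $c(x)=d_i x$ has dimension $n-1$.
\end{itemize}
Let $A' = A^{(n-1)} \cup C_n$ together with degeneracies. Every face of a critical $n$-cell has dimension $n-1$, and all non-degenerate $(n-1)$-cells except type $\II$ ones already lie in $A^{(n-1)}$. So $A'$ is a simplicial subset only if critical cells have no type $\II$ faces, which fails in general.

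\textbf{Ordering the attachments.} We therefore order the attachments differently: adjoin all critical $n$-cells together with the type $\I$/type $\II$ pairs in dimension $n$ at the same stage. We claim that the map $A^{(n-1)} \cup C_n \times \partial\Delta[n] \to A^{(n)}$ is a strict pushout along $C_n \times \partial\Delta[n] \cto C_n \times \Delta[n]$ followed by inner anodyne maps, up to reordering. To make this precise, we use the well-founded order on type $\II$ cells from \cref{def:qcat_scheme}. For a type $\II$ $(n-1)$-cell $y$, every face of $c^{-1}(y)$ other than $y$ is either in $A^{(n-1)}$ or is a type $\II$ $(n-1)$-cell strictly smaller than $y$. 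Adjoining the pairs $(c^{-1}(y), y)$ by transfinite induction along this order is then a transfinite composite of pushouts of inner horns $\Lambda^i[n] \cto \Delta[n]$. Critical $n$-cells may have type $\II$ faces, so they are attached afterwards: along their boundary to $A^{(n-1)} \cup \{\text{type }\II\ (n-1)\text{-cells and type }\I\ n\text{-cells}\}$.

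\textbf{The homotopy pushout.} Set $\tilde A = A^{(n-1)} \cup \{\text{type }\I\ n\text{-cells}\}$. This is a simplicial subset containing every non-degenerate $(n-1)$-cell, and $A^{(n-1)} \to \tilde A$ is inner anodyne by the argument above. Moreover $\tilde A \to A^{(n)}$ is a strict pushout of $C_n \times \partial\Delta[n] \to C_n \times\Delta[n]$. It is a pushout because the only missing cells are the critical $n$-cells, whose boundaries lie in $\tilde A$, and non-degenerate simplices are attached freely.

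The boundary map $C_n \times \partial\Delta[n] \to \tilde A$ then factors, up to the Joyal weak equivalence $A^{(n-1)} \simeq \tilde A$, through $A^{(n-1)}$ in the homotopy category. Because $A^{(n-1)} \to \tilde A$ is a trivial cofibration, we can form the strict pushout of $C_n \times\Delta[n]$ along a homotopy lift. More cleanly, we can note that the square with corner $A^{(n-1)}$ and the composite map to $\tilde A$ exhibits $A^{(n)}$ as a homotopy pushout, since the homotopy pushout is invariant under replacing $A^{(n-1)}$ by the weakly equivalent $\tilde A$.

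\textbf{Main obstacle.} The main subtlety is that the boundary map of a critical cell does not literally land in $A^{(n-1)}$; this is why the statement says ``homotopy pushout''. The key work is verifying the inner anodyne step via the well-founded order, so that the boundary map is only defined up to the equivalence $A^{(n-1)} \simeq \tilde A$.
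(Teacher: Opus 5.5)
Your proof is correct and follows the paper's strategy. You restrict the \scheme to $A^{(n-1)} \cto A^{(n)}$, fill the type $\I$/type $\II$ pairs by inner horn pushouts along the well-founded order, and attach the critical $n$-cells along their boundaries.

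Your ordering is more accurate than the paper's wording. You fill the horns first to get $\tilde A$, which is exactly the $A^{(n-1+)}$ used in the proof of \cref{lem:bij_on_crit}, and only then attach the critical cells, with the attaching map landing in $A^{(n-1)}$ only up to the equivalence $A^{(n-1)} \simeq \tilde A$. The paper instead glues the critical cells first, even though their boundaries can contain type $\II$ $(n-1)$-cells, e.g.\ $(c,ab)$ in the boundary of $(c,b,a)$ in \cref{ex:N3}.
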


 In particular, the map $A \to B$ is a (homotopy) colimit of a sequence of cofibration
\[A \cto A^{(0)} \cto A^{(1)} \cto \dots \cto A^{(n)} \cto \dots \cto B\]
exactly as in \cref{main_th_intro}. So in order to prove \cref{main_th_intro} all we need is to equip the cofibration $\varnothing \cto N \Ccal$ with a \scheme whose critical cells correspond to critical branchings of the rewrite system presenting $\Ccal$. Which is essentially what we will do in \cref{def:Browm_collapsing} and \cref{prop:Brown_collaps}, see also \cref{sec:weak_vs_strong_criticallity} for the connection between the critical cell of the \scheme we will introduce in \cref{def:Browm_collapsing} and critical branching.

\begin{proof}
It is clear that $A^{(n)}$ is a simplicial subset: For each cell of $A^{(n)}$ its degeneracies are also in $A^{(n)}$ by definition, All the non-degenerate cells of $A^{(n)}$ are either in $A$ or of dimension $\leqslant n$, so their faces are all either in $A$ or of dimension $<n$ and both cells in $A$ and cells of dimension $<n$ are in $A^{(n)}$, so all faces of non-degenerate cells in $A^{(n)}$ are also in $A^{(n)}$. As all degenerate cells in $A^{(n)}$ are by definition degeneracies of a non-degenerate cell in $A^{(n)}$, all their faces are also in $A^{(n)}$. The fact that $B$ is the colimit of the $A^{(n)}$ is clear as directed union are colimits in the category of simplicial sets. It is also immediate that the \scheme on $A \to B$ restricts to a \scheme on $A^{(n-1)} \to A^{(n)}$ whose critical cells are exactly the critical $n$-cells of $A \to B$, that is $A^{(n-1)} \to A^{(n)}$ is obtained by gluing the critical $n$-cells, i.e. a pushout like the one in the proposition, and then attaching all remaining type $\I$ and $\II$ cells by pushouts of horn inclusions - which does not change the resulting object up to homotopy equivalence, and hence we have the homotopy pushout claimed in the proposition.
\end{proof}

Finally, \scheme are also convenient to check that some square are homotopy pushouts:

\begin{lemma}\label{lem:bij_on_crit}
  Let $A \cto B$ and $A \cto B'$ be two cofibrations equipped with \scheme. Assume there is a commutative triangle
  \[\begin{tikzcd}
    A \ar[d,cto] \ar[dr,cto] \\
    B \ar[r,"f"swap] & B'
  \end{tikzcd}\]
   where $f$ induces a bijection between the critical cells of $B$ and the critical cells of $B'$. Then $f$ is a weak equivalence. 
\end{lemma}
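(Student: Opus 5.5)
The plan is to proceed by induction along the skeletal-type filtrations of \cref{rk:scheme_give_presentation}. Write $A^{(n)} \subseteq B$ and $A'^{(n)} \subseteq B'$ for the filtrations attached to the two \schemes. The first point to settle is whether $f$ sends $A^{(n)}$ into $A'^{(n)}$. This is not automatic, since $f$ need not respect types $\I$ and $\II$. I therefore avoid using the filtrations of $f$ directly and instead compare both sides with a common ``model''.

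Concretely, let $C_n$ be the set of critical $n$-cells of $B$; by hypothesis $f$ identifies it with the set of critical $n$-cells of $B'$. I would build, by induction on $n$, a sequence $A = Y^{(-1)} \cto Y^{(0)} \cto \dots$, where $Y^{(n)}$ is obtained from $Y^{(n-1)}$ as an honest pushout along $C_n \times \partial\Delta[n] \cto C_n \times \Delta[n]$, and then followed by an inner anodyne completion (for instance a fibrant replacement relative to $Y^{(n-1)} \cup C_n\times\Delta[n]$ by inner horns). Alongside it I would construct compatible maps $Y^{(n)} \to B$ and $Y^{(n)} \to B'$, both under $A$, with the second equal to $f$ composed with the first up to homotopy. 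These maps exist as follows. Each critical cell $x \in C_n$ has its boundary in $A^{(n-1)}$. By induction, $A^{(n-1)}$ is weakly equivalent to $Y^{(n-1)}$ via the comparison map, so the boundary lifts to $Y^{(n-1)}$ up to homotopy. We then glue in $x$, and on the $B'$ side we glue in $f(x)$.

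Passing to the $\infty$-categorical viewpoint makes this cleaner. By \cref{rk:scheme_give_presentation}, $B$ is the colimit of a sequence in which each step is a homotopy pushout along $C_n\times\partial\Delta_n \to C_n\times\Delta_n$, and likewise for $B'$. The key claim is that $f$ restricts to a weak equivalence $A^{(n)} \to A'^{(n)}$ for every $n$; taking colimits then gives the result, since sequential colimits of cofibrations are homotopy colimits.

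To prove the key claim, I would first check that $f$ maps $A^{(n)}$ into $A'^{(n)}$ up to inner anodyne enlargement. Since $B' = \bigcup A'^{(m)}$, the image of the $A^{(n)}$-cells lies in the union. Then I use \cref{lemma:Po_of_scheme} and a cube argument: the square for $A^{(n-1)} \to A^{(n)}$ maps to the square for $A'^{(n-1)} \to A'^{(n)}$. On the corner $C_n\times\partial\Delta_n$ this map is an isomorphism (the bijection on critical cells) and on $C_n \times \Delta_n$ it is also an isomorphism; by induction it is an equivalence on $A^{(n-1)}$. A map of homotopy pushout squares that is an equivalence on three corners is an equivalence on the fourth.

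The main obstacle is exactly the compatibility issue from the first paragraph. The map $f$ must carry critical $n$-cells to critical $n$-cells of the same dimension, and it must carry $A^{(n)}$ into something weakly equivalent to $A'^{(n)}$ under $A'^{(n-1)}$. I would handle this by replacing $A'^{(n)}$ with $A'^{(n)} \cup f(A^{(n)})$. This contains only extra type $\I$/$\II$ cells of $B'$, so I would argue, via a well-founded induction on the order in \cref{def:qcat_scheme}, that adjoining to it all the paired cells gives an inner anodyne extension. If that restriction argument fails, the fallback is a purely homotopical one: compare the two presentations as colimits in the $\infty$-category of $\infty$-categories, and note that the attaching maps agree because they are computed inside the common image of $f$.
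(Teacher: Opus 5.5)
\textbf{The gap: there is no map of squares.} Your outline matches the paper's: filter by the $A^{(n)}$, glue in the critical cells, pass to the colimit. But the step that carries all the weight, the ``map of homotopy pushout squares'', does not exist under the stated hypotheses, and no enlargement of the target repairs it.

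First, the boundary of a critical $n$-cell need not lie in $A^{(n-1)}$, as you claim; it may contain type $\II$ $(n-1)$-cells. So the square presenting $A^{(n)}$ has as its corner the paper's $A^{(n-1+)}$, that is, $A^{(n-1)}$ plus the type $\II$ $(n-1)$-cells and the type $\I$ $n$-cells. A map of squares therefore needs $f(A^{(n-1+)})\subseteq A'^{(n-1+)}$. This means $f$ must send type $\I$ cells to type $\I$ or degenerate cells, which is exactly what you noticed need not hold.

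Enlarging the target does not help. Your closure of $A'^{(n)}\cup f(A^{(n)})$, or the paper's $A'^{(n+)}$ (which contains every cell of dimension $\leqslant n$), fixes containment of images, but not compatibility of the attaching data. Incidentally, adjoining the type $\I$ partners of type $\II$ cells that are already present attaches them along their full boundary, not along a horn. So $A'^{(n)}\cup f(A^{(n)})$ is generally not equivalent to $A'^{(n)}$. Your fallback, that the attaching maps ``agree because they are computed inside the common image of $f$'', is precisely the false step. So is the claim in your second paragraph that the two lifts of a boundary can be chosen compatibly.

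\textbf{The statement is false as written.} Let $B$ have one vertex $v$, two loops $e,w$, and two non-degenerate $2$-simplices $\tau,\sigma$, both with $(d_0,d_1,d_2)=(e,w,e)$. Declare $v,e,\sigma$ critical, $\tau$ of type $\I$ and $w$ of type $\II$, with $c(\tau)=w=d_1\tau$. Let $f\colon B\to B$ be the identity except that $\tau\mapsto\sigma$. Then $f$ is the identity on critical cells. But $H_2(B)\cong\mathbb{Z}$ is generated by the class of $\tau-\sigma$, and $f(\tau-\sigma)=0$. So $f$ is not even a Kan--Quillen equivalence. Here $A^{(1)}=A'^{(1)}=\{v,e\}$, and your inductive step fails exactly at $n=2$.

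You can also make $f$ injective and send non-critical cells to non-critical cells. Add to the target a $2$-simplex $z$ with boundary $(e,w,e)$ and a $3$-simplex $\rho$ with faces $(\sigma,z,s_0w,s_0e)$, with $z$ of type $\II$, $\rho$ of type $\I$ and $c(\rho)=z$, and send $\tau\mapsto z$. Then $[z-\sigma]=-[\partial\rho]=0$, while the target still has $H_2\cong\mathbb{Z}$.

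\textbf{The paper's proof and the correct statement.} The paper's proof has the same hole, hidden in ``both are defined from $A^{(n-1)}$ and $A'^{(n-1+)}$ by taking the same homotopy pushout''. The true statement adds the hypothesis that $f$ sends type $\I$ cells to type $\I$ or degenerate cells. Under that hypothesis:
\begin{itemize}
\item $f(A^{(n)})\subseteq A'^{(n)}$ and $f(A^{(n+)})\subseteq A'^{(n+)}$.
\item $f$ gives an honest map of spans from $A^{(n+)}\leftarrow C_{n+1}\times\partial\Delta[n+1]\cto C_{n+1}\times\Delta[n+1]$ to the corresponding span for $B'$.
\item Two-out-of-three for the inner anodyne inclusions $A^{(n)}\cto A^{(n+)}$ and $A'^{(n)}\cto A'^{(n+)}$, the gluing lemma, and passage to the sequential colimit then finish the proof, with no need for your auxiliary models $Y^{(n)}$.
\end{itemize}
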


\begin{proof}
  Consider $A^{(n+)}$ to be defined similarly to $A^{(n)}$ in \cref{rk:scheme_give_presentation} but we add to it the type $\II$ cells of dimension $n$ and the type $\I$ cells of dimension $n+1$, as well as all the degeneracies. Then a similar argument shows that $A^{(n+)}$ is also a subsimplicial set of $B$, and we have a sequence of inclusions $A^{(n)} \subseteq A^{(n+)} \subseteq A^{(n+1)} \subseteq A^{(n+1+)}$ with the \scheme on $B$ restricting to all these inclusions and showing that each $A^{(n)} \to A^{(n+)}$ is inner anodyne, while $A^{(n+)} \to A^{(n+1)}$ is just adding freely the critical $(n+1)$-cells.

  Writing $A'^{(n+)}$ for the similar objects constructed in $B'$, we immediately see that the map $f$ sends each $A^{(n)}$ to $A'^{(n+)}$ as $A'^{(n+)}$ contains all cells of dimension $\leqslant n$ (and all cells in $A$) and the non-degenerate cells of $A^{(n)}$ are all either in $A$ or of dimension $\leqslant n$. One can then show by induction that $f$ induces a homotopy equivalence between $A^{(n)}$ and $A'^{(n+)}$ because both are defined from $A^{(n-1)}$ and $A'^{(n-1+)}$ by taking the same homotopy pushout. Taking the countable union (which is a homotopy colimit, as a colimit of a sequence of cofibrations) gives us that $f$ induces an equivalence between $B = \bigcup A^{(n)}$ and $B' = \bigcup A'^{(n+)}$.
\end{proof}

\begin{cor}\label{cor:h_pushout_from_rw}
  Given a square in $\sSet$
  \[\begin{tikzcd}
      A \ar[d,cto,"i"] \ar[r] & C \ar[d,cto,"j"] \\
      B \ar[r] & D
    \end{tikzcd}\]
Assume that $i$ and $j$ are equipped with \schemes such that the map $B \to D$ induces a bijection between the critical cells of $B$ and $D$. Then the square is a homotopy pushout. \end{cor}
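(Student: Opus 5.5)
We reduce the claim to \cref{lem:bij_on_crit} by comparing $D$ with the actual pushout. Let $P = B \coprod_A C$ be the strict pushout in $\sSet$, with canonical maps $C \cto P$ and $\phi : P \to D$ induced by the square; $\phi$ restricts to the identity on $C$. Since $i$ is a cofibration and the Joyal model structure is left proper (all objects cofibrant), the strict pushout $P$ is a homotopy pushout. Hence the square is a homotopy pushout if and only if $\phi$ is a weak equivalence in the Joyal model structure.

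Next we equip $C \cto P$ with a \scheme. By \cref{lemma:Po_of_scheme}, the \scheme on $i$ transports to a \scheme on the pushout $C \cto P$. In it, the non-degenerate cells of $P$ not in $C$ are in canonical bijection, via the map $g : B \to P$, with the non-degenerate cells of $B$ not in $A$, and types are preserved. In particular the critical cells of $P$ (relative to $C$) correspond bijectively, via $g$, to the critical cells of $B$ (relative to $A$).

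We then apply \cref{lem:bij_on_crit} to the two cofibrations $C \cto P$ and $C \cto D$, the latter being $j$ with its given \scheme, together with the triangle formed by $\phi$. The composite $B \to P \to D$ is the given map $B \to D$. By hypothesis this map sends critical cells of $B$ bijectively onto the critical cells of $D$, and critical cells of $B$ correspond bijectively to critical cells of $P$. Therefore $\phi$ induces a bijection between the critical cells of $P$ and those of $D$. \cref{lem:bij_on_crit} then gives that $\phi$ is a weak equivalence, which concludes the proof.

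The main point needing care is the interpretation of ``critical cells of $B$ and $D$'': they must mean critical cells of the cofibrations $i$ and $j$, i.e.\ cells outside $A$ and outside $C$ respectively. We also need that the map $B \to D$ genuinely sends critical cells of $i$ to critical cells of $j$, which is part of the hypothesis. If \cref{lem:bij_on_crit} is read as requiring only a bijection induced on critical cells, with no further compatibility, the argument goes through directly. Otherwise one checks that the inductive comparison in its proof, of $A^{(n)}$ with $A'^{(n+)}$, only uses that $\phi$ maps each critical $n$-cell of $P$ to a critical $n$-cell of $D$ bijectively. That holds here because $\phi \circ g$ is the given map.
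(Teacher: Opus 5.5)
Your proposal is correct and is essentially the paper's own proof. The paper likewise transports the \scheme to $C \cto B \coprod_A C$ via \cref{lemma:Po_of_scheme}, applies \cref{lem:bij_on_crit} to the comparison map $B \coprod_A C \to D$, and concludes because a pushout along a cofibration is a homotopy pushout.
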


\begin{proof} By \cref{lemma:Po_of_scheme} we have a \scheme on $C \cto B \coprod_A C$. And the map $ B \coprod_A C \to D$ is a bijection on critical cells, so by \cref{lem:bij_on_crit} it is a weak equivalence. As $ B \coprod_A C$ is a homotopy pushout (pushout along a cofibration) this proves the result.
\end{proof}

\section{Brown's \scheme}

In this section, we fix $\Ccal=|G,R|$, a $1$-category presented by a convergent rewrite system $(G,R)$, and we will construct a quasicategorical \scheme on $N(\Ccal)$. We mostly follows Brown's \cite{brown1992geometry}, with the exception that Brown only do the case where $\Ccal$ is a monoid and only claim to build a scheme in the sense of Kan-Quillen model structure, but the same works for the Joyal model structure as well.

The arrows of $\Ccal$ can hence be represented as normal path in $(G,R)$ and composition in $\Ccal$ corresponds to composition of path followed by normalization.

\begin{remark}
  The nerve of $N \Ccal$ hence has for $n$-simplexes the sequences
  \[ (\omega_0, \dots,\omega_{n-1}) \]
  of composable paths in normal form. Degeneracy maps correspond to inserting an empty path, inner face maps take the composite of two successive paths (remove the comma) and apply the normalization procedure to the composite. Outer faces remove $\omega_{n-1}$ or $\omega_0$ from the list.

A cell $(\omega_0, \dots,\omega_{n-1})$ is non-degenerate if and only if none of the $\omega_i$ is the empty word.
\end{remark}

We can now present Brown's \scheme from \cite{brown1992geometry}, which is the central object of this note:

\begin{construction}[Brown \cite{brown1992geometry}]
\label{def:Browm_collapsing}
  A non-degenerate cell $(\omega_0,\dots,\omega_{n-1})$ of $N\Ccal$ is said to be critical if it satisfies the following three conditions:
  \begin{enumerate}
  \item[$(C0)$] $\omega_0$ is a generator (a single letter word).
  \item[$(C1_i)$] Each composite $\omega_{i-1} \omega_i$ is reducible (not normal)
  \item[$(C2_i)$] All strict initial segments of $\omega_{i-1} \omega_i$ are  normal.
  \end{enumerate}

  A non-degenerate, non-critical cell is said to be of type $\II$ if either:
  \begin{itemize}
  \item $\omega_0$ is not a generator, or
  \item at the smallest $i$ where either conditions $(C1_i)$ and $(C2_i)$ fails, it is condition $(C2_i)$ fails.
  \end{itemize}
  
  And finally, it is type $\I$ otherwise, i.e.:

  \begin{itemize}
  \item $\omega_0$ is a generator and,
  \item at the smallest $i$ where either conditions $(C1_i)$ and $(C2_i)$ fails, it is condition $(C1_i)$ that fails\footnote{Note that $(C1_i)$ and $(C2_i)$ cannot fail at the same time as if some initial segment of $\omega_{i-1}\omega_i$ is reducible, then so is $\omega_{i-1}\omega_i$.}. 
  \end{itemize}

  Finally, we define the pairing functions $c$ on Type $\I$ cells as
  \[ c(\omega_0,\dots,\omega_{n-1}) = (\omega_0, \dots, \omega_{i-1} \omega_{i}, \dots ,\omega_n)\]
  where $i$ is the first index where condition $(C1_i)$ fails. Importantly, by definition of condition $(C1_i)$, this composite $\omega_{i-1}\omega_i$ is already in normal form.
\end{construction}

\begin{example}
  For an arbitrary category $\Ccal$, we have seen in \cref{ex:std_rewrite} that we can consider the ``standard rewrite system''. The cells of $N(\Ccal)$ are then just sequences $(f_0,\dots,f_{n-1})$ of composable arrows of $\Ccal$. Non-degenerate cells are the sequences of non-identity cells. Expressions of these as normal form paths are of the form $(p_0,\dots,p_{n-1})$ where each $p_i$ is either $[f]$ for $f$ a non-identity arrow or $p_i$ is a trivial path $1_x$. So non-degenerate cells are of the form $([f_0],\dots,[f_{n-1}])$ and it turns out they are all critical. So in the special case of this standard rewrite system, Brown's \scheme is not very interesting: it just makes every cell critical.
\end{example}

\begin{prop}[Brown \cite{brown1992geometry}] \label{prop:Brown_collaps}
 For any category $\Ccal$ presented by a convergent rewrite system, the structure from \cref{def:Browm_collapsing} is a \scheme on $\varnothing \cto N(\Ccal)$.
\end{prop}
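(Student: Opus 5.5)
The plan is to check the two conditions of \cref{def:qcat_scheme} directly. For condition (1) we show that $c$ is a bijection from type $\I$ cells onto type $\II$ cells given by an inner face. For condition (2) we find a well-founded measure on type $\II$ cells that strictly decreases along the generating relation $d_j c^{-1}(y) \leqslant y$, whenever $d_j c^{-1}(y)$ is type $\II$ and different from $y$. The idea is Brown's original one: we only have to recheck that the faces used are inner, and that nothing in the argument uses invertibility.

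\emph{Step 1: $c$ lands in type $\II$.} Let $x=(\omega_0,\dots,\omega_{n-1})$ be type $\I$, and let $i$ be its first failing index, so that $\omega_{i-1}\omega_i$ is normal and $1 \leqslant i \leqslant n-1$. Then $c(x)=d_i x$ is an inner face, non-degenerate, and no normalization is needed. If $i=1$, the first entry $\omega_0\omega_1$ has length $\geqslant 2$, so $c(x)$ is type $\II$. If $i>1$, conditions $(C1_j),(C2_j)$ for $j<i-1$ are unchanged. At index $i-1$ the pair is $(\omega_{i-2},\omega_{i-1}\omega_i)$. Its composite is reducible, because it has the reducible strict initial segment $\omega_{i-2}\omega_{i-1}$, which by the same token makes $(C2_{i-1})$ fail. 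So $c(x)$ is type $\II$, with first failure at $i-1$ coming from $(C2)$.

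\emph{Step 2: the inverse of $c$.} Let $y$ be type $\II$. If $\omega_0$ is not a generator, write $\omega_0=a\omega'$ with $a$ its first letter and set $x=(a,\omega',\omega_1,\dots)$. This $x$ is type $\I$ with failing index $1$, since $a\omega'$ is normal. Otherwise let $i$ be the first index where $(C2_i)$ fails, and take the shortest reducible initial segment of $\omega_{i-1}\omega_i$. It has the form $\omega_{i-1}u$ with $u$ a non-empty proper prefix of $\omega_i=uv$: $u$ is non-empty because $\omega_{i-1}$ is normal, and proper because the segment is strict. Set $x=(\dots,\omega_{i-1},u,v,\dots)$. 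By minimality the pair $(\omega_{i-1},u)$ satisfies $(C1),(C2)$, while $uv=\omega_i$ is normal, so $x$ is type $\I$ with $c(x)=y$. One then checks that the two constructions are mutually inverse; the choice of $u$ is forced by the analysis of Step 1.

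The uniqueness of $i$ with $c(x)=d_ix$ needs a separate argument comparing $d_jx$ with $d_ix$ entrywise for $j \neq i$. This is a point I would treat carefully, possibly by comparing the concatenated total words and their first letters rather than the entries themselves.

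\emph{Step 3: well-foundedness.} To a cell attach the pair (total word $w=\omega_0\cdots\omega_{n-1}$, $n-k$), where $k$ is the first failing index (take $k=0$ if $\omega_0$ is not a generator). Order this lexicographically, using in the first coordinate the relation $\succ$ generated by one-step reductions and passage to a proper subpath. First I would prove that $\succ$ is well-founded from termination. A reduction applied to a subpath extends to a reduction of the ambient path, so any infinite chain containing infinitely many reduction steps lifts to an infinite reduction of the initial word; a chain with only finitely many reduction steps eventually only shrinks length. Now take $x=c^{-1}(y)$ and a face $d_jx \neq y$:
\begin{itemize}
\item $d_0x$ and $d_nx$ have total word a proper suffix or proper prefix of $w$;
\item inner faces at indices before the failing index merge a reducible pair, so their total word is a strict reduction of $w$;
\item inner faces at indices after the merge index of $c$ keep $w$ and keep the good prefix of $x$, which is strictly longer than that of $y$, so the second coordinate drops.
\end{itemize}

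I expect the main obstacle to be this last case analysis: tracking the first failing index of faces near the merge position, for example when the merged pair $(u,v\omega_{i+1})$ sits right after the critical prefix. The uniqueness clause in Step 2 is a secondary subtlety.
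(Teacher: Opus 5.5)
Your proposal follows the paper's proof closely. You check that $c(x)$ is type $\II$ in the same way, via the reducible strict initial segment $\omega_{i-2}\omega_{i-1}$. You build the same inverse, splitting off the first letter or cutting at the shortest reducible initial segment. Your well-foundedness argument is also the paper's: the total word decreases by a reduction or by passing to a proper subpath, and otherwise the critical prefix gets longer. Your lexicographic measure is a cleaner packaging of the paper's infinite-descent argument.

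The one point you do not actually prove is the uniqueness clause of condition (1), and the method you float for it does not work on its own. For $i<j<n$ with $\omega_{j-1}\omega_j$ normal, $d_jx$ and $c(x)$ have the same total word and the same first letter. For example, the type $\I$ cell $(a,b,c)$ with $ab$ and $bc$ normal has $d_1=(ab,c)$ and $d_2=(a,bc)$.

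The paper splits the argument at $i$.
For $j<i$, compare total words. The face $d_0x$ has a strictly shorter total word. For $0<j<i$, condition $(C1_j)$ makes the merged pair reducible, so $|x|\Rightarrow^+|d_jx|$, while $|c(x)|=|x|$ and termination rules out $|x|\Rightarrow^+|x|$.
For $j>i$, including the last face, compare entries. The entry of $d_jx$ in position $i-1$ is untouched and equals $\omega_{i-1}$, whereas the corresponding entry of $c(x)$ is $\omega_{i-1}\omega_i\neq\omega_{i-1}$, since $\omega_i$ is non-empty.

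There is also a small inaccuracy in your third bullet of Step 3. For faces $d_jx$ with $j$ beyond the merge index, the total word is not necessarily kept: the newly merged pair, e.g.\ $v\omega_{i+1}$, may be reducible, so $w$ can strictly reduce. This does no harm, since the first coordinate of your lexicographic measure then drops. The case should read ``keep $w$ or strictly reduce it'', exactly as in the paper's fourth case.
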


\begin{proof}
  The proof from \cite{brown1992geometry} can be applied with essentially no changes. For completeness we include the details of the proof:

\textbf{The pairing $c$ is a bijection between type $\I$ and type $\II$ cells:}

Recall from \cref{def:Browm_collapsing} that for any type $\I$ cell $x=(\omega_0,\dots,\omega_n)$,
we define
\[ c(x) = (\omega_0,\dots,\omega_{i-1}\omega_i, \dots,\omega_n)\]
where $i$ is the smallest index such that $\omega_{i-1}\omega_i$ is irreducible. 

First observe that $c(\omega_0,\dots,\omega_{n-1})$ is always a type $\II$ cell:
\begin{itemize}
\item either $i=1$ in which case $(\omega_0, \dots, \omega_{i-1} \omega_i, \dots ,\omega_{n-1})$ will start with $(\omega_0 \omega_1,\dots)$ and hence the first path will not be a generator,
\item or $i>1$ in which case $\omega_{i-2}\omega_{i-1} \omega_{i}$ will have an initial segment $\omega_{i-2}\omega_{i-1}$ which is reducible, while $(\omega_0,\dots,\omega_{i-2})$ is a critical cell.
\end{itemize}

Also note that in both cases, we can reconstruct $x$ from $c(x)$ by this observation: if the first word of $c(x)$ is not a generator then we are in the first case and we get $x$ by separating the first letter back. If $c(x)=(v_0,\dots,v_{n-1})$ is type $\II$ because the first failure of \cref{def:Browm_collapsing} is that $v_{j-1}v_{j}$ has a reducible initial segment, then $x$ can be reconstructed by separating this first initial reducible segment. Indeed, in $c(x) = (\omega_0,\dots,\omega_{i-1}\omega_i, \dots,\omega_n)$ as $\omega_{i-2}\omega_{i-1}$ has no reducible strict initial segment, $\omega_{i-2}\omega_{i-1}$ is exactly the first reducible initial segment of $\omega_{i-2}\omega_{i-1}\omega_i$.

Finally, the same discussion also shows that if we start from a type $\II$ cell $x$, and do the same construction as described above we obtain the unique type $\I$ cell $y$ such that $x=c(y)$.

\textbf{There is a unique $i$ such that $c(y)=d_i(y)$.}

Indeed for any type $\I$ cell $y$, $c(y)$ is by definition obtained by ``removing a comma'' from $y$, so it is $d_i(y)$ for some inner $0<i<n$. Taking $d_j(y)$ for $0\leqslant j< i$ will either remove the first letter, or result in the application of a reduction rule. So it cannot produce the same $c(y)$. Taking $d_j(y)$ for $j>i$ will result in the application of a reduction rule, or in a cell starting with $(\omega_0, \dots,\omega_{i-1},\dots)$ which cannot be equal to $c(y)$ as $\omega_{i-1} \omega_i \neq \omega_{i-1}$ (remember that as the cells under consideration are all non-degenerate, we cannot have $\omega_{i} = 1$).

\textbf{The \scheme is well-founded.}

The relation $x \leqslant y$ as defined in \cref{def:qcat_scheme}, in the case of Brown's collapsing scheme \cref{def:Browm_collapsing}, means that $y= (\omega_0,\dots,\omega_n)$ is a type $\II$ cell, and its corresponding type $\I$ cell $c^{-1}y$ is obtained by ``splitting'' one of the $\omega_i$. Either $i=0$, in which case $\omega_0$ is not a single letter and we write $\omega_0=uv$ where $u$ is its first letter, or we consider the first $\omega_i$ such that $\omega_{i-1} \omega_i$ has a reducible initial segment, and we write $\omega_i=uv$ so that $\omega_{i-1}u$ is the smallest such reducible initial segment. In both cases we can write $c^{-1}= (\omega_0,\dots,\omega_{i-1},u,v,\omega_{i+1},\dots,\omega_n)$. Then $x$ is any face of $c^{-1}y$. For any cell $y =(\omega_0,\dots,\omega_n)$ we denote by $|y|$ the total composite $\omega_0 \dots \omega_n$ as a (possibly) non-reduced path. Note that, in our situation above, $|y| = |c^{-1}(y)|$.

There are a few different things that can happen:

\begin{enumerate}
\item If $x = d_0(c^{-1} y )$ or $x=d_{n+1}(c^{-1} y)$, this removes the first or last $\omega_i$ from $y$. In this case $|x|$ is a strict subpath of $|y|$.
\item If $x = d_jc^{-1} y$ for $j\leqslant i$, then a reduction always occurs in the new composite that appears when removing the comma, so $|y| \Rightarrow^* |x|$ is a non-trivial reduction.
\item If $j=i+1$, then $x=y$ and we can ignore this case, as well-foundedness of $\leqslant$ implicitly means well-foundedness of the corresponding strict order relation $<$.
\item $i+1 < j <n+1$, then $|x|$ can be either $|y|$ or obtained from $|y|$ by some reduction, but the cell $x$ is ``closer to being critical'', in the sense that the largest initial part of $y$ that is critical was $(\omega_0,\omega_1,\dots,\omega_{i-1})$ and for $x$ it is now $(\omega_0,\dots,\omega_{i-1},u)$.
\end{enumerate}

Assuming, toward a contradiction, that there is an infinite sequence of such steps, there can only be a finite number of reductions as in the last case in a row, as the dimension of the cell is preserved, so the length of the critical part cannot increase forever; every other reduction replaces $|y|$ with either a subpath or a strict reduction $|x|$. So any infinite sequence of such steps will always produce (by skipping all steps corresponding to the last case) an infinite sequence of paths $y_1,\dots,y_n,\dots$ such that each $y_{i+1}$ is either a subpath or a reduction of $y_i$. As there can only be a finite number of ``subpath'' steps in a row, this in turn produces an infinite sequence of reductions $y_0 \Rightarrow y_1 \Rightarrow \dots \Rightarrow y_n \Rightarrow \dots$, by skipping all the subpath steps and adding back to all subsequent terms all the parts of the path that have been removed. This would contradict the assumption that the rewrite system is terminating. 
\end{proof}

\begin{remark}
  Combining \cref{prop:Brown_collaps} and \cref{rk:scheme_give_presentation} this proves \cref{main_th_intro} - with the $C_n$ being the set of critical cells. The relation between critical cells and critical branching needed to really complete the proof is discussed in the next section.
\end{remark}

We conclude this section by the following easy lemma that will be useful later:

\begin{lemma} \label{lem:extension_of_rw}
Let $A$ and $B$ be two categories presented by convergent rewrite systems $(G_A,R_A)$ and $(G_B,R_B)$ such that $G_A \subseteq G_B$ and $R_A \subseteq R_B$. Assume that each normal path in $(G_A,R_A)$ is still normal in $(G_B,R_B)$. Then the induced functor $F:A \to B$ gives a cofibration $N(A) \to N(B)$ and Brown's \scheme on $N(B)$ restricts to $N(A)$. The \scheme it induces on $N(A)$ is Brown's \scheme for $(G_A,R_A)$, and it also induces a \scheme on the cofibration $N(A) \cto N(B)$.
\end{lemma}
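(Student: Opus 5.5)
The plan has three parts: show the functor is injective via normal forms, show that every criterion in Brown's construction gives the same verdict in both rewrite systems for cells coming from $A$, and then invoke \cref{lemma:restrict_scheme}.

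\textbf{Step 1: $N(A)\to N(B)$ is a cofibration.} A cofibration of simplicial sets is a monomorphism, so it suffices to show that $F$ is injective on objects and on arrows. Objects are vertices of $G_A \subseteq G_B$. An arrow of $A$ is a normal path $\omega$ of $(G_A,R_A)$, and $F$ sends it to the class of the same path in $B$. By hypothesis $\omega$ is still normal in $(G_B,R_B)$, so it is the normal form of $F(\omega)$. Normal forms are unique, so distinct normal paths of $A$ go to distinct arrows of $B$.

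Consequently a simplex $(\omega_0,\dots,\omega_{n-1})$ of $N(A)$ is sent to the simplex of $N(B)$ with literally the same normal words. The image of $N(A)$ is therefore exactly the set of simplices of $N(B)$ all of whose words are $A$-normal paths in $G_A$. Non-degeneracy means no $\omega_i$ is empty, so it is also preserved and reflected.

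\textbf{Step 2: Brown's classification is the same in $A$ and in $B$.} We need two facts.

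\begin{enumerate}
\item[(a)] A path $p$ in $G_A$ is $A$-normal if and only if it is $B$-normal. The direction ``$A$-normal implies $B$-normal'' is the hypothesis. Conversely, if $p$ contains an $R_A$-redex, that redex is also an $R_B$-redex because $R_A \subseteq R_B$, so $p$ is not $B$-normal.
\item[(b)] The normal form of a path $p$ in $G_A$ is the same whether computed in $A$ or in $B$. Reduce $p$ to its $A$-normal form $p'$ using rules of $R_A$. These rules lie in $R_B$, so $p \Rightarrow^* p'$ in $B$, and $p'$ is $B$-normal by (a). Uniqueness of $B$-normal forms then identifies the two.
\end{enumerate}

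The conditions $(C0)$, $(C1_i)$ and $(C2_i)$ only involve whether $\omega_0$ is a letter and whether various paths in $G_A$ are normal. By (a), each of these conditions holds for a cell of $N(A)$ exactly when it holds for its image in $N(B)$. Hence critical, type $\I$ and type $\II$ cells of $N(A)$ for $(G_A,R_A)$ are exactly the cells of $N(A)$ of those types in $N(B)$.

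\textbf{Step 3: $c$ and $c^{-1}$ preserve $N(A)$, then conclude.} For a type $\I$ cell $x$ of $N(A)$, $c(x)$ is obtained by concatenating $\omega_{i-1}\omega_i$. This concatenation is already normal, so $c(x)$ is the same in both schemes and lies in $N(A)$. For a type $\II$ cell $y$ of $N(A)$, $c^{-1}(y)$ splits a word at a position determined by normality tests on paths in $G_A$. By (a) this position is the same in $A$ and $B$, and a subword of a normal word is normal, so $c^{-1}(y)$ lies in $N(A)$ and coincides with Brown's pairing for $A$. The restriction property therefore holds, and \cref{lemma:restrict_scheme} gives both the induced scheme on $N(A)$, which by Step 2 is Brown's scheme for $(G_A,R_A)$, and the scheme on the cofibration $N(A)\cto N(B)$.

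The main obstacle is the converse direction in (a), that $B$-normal implies $A$-normal, since the stated hypothesis only gives the other direction. It is resolved by $R_A \subseteq R_B$ as explained above. Once (a) and (b) are in place, everything else is bookkeeping.
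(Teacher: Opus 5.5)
Your proof is correct and follows the paper's argument: injectivity of $F$ from uniqueness of normal forms, then the observation that $c$ and $c^{-1}$ only remove or insert a comma and so never leave $N(A)$, concluding with \cref{lemma:restrict_scheme}. Your Step 2 spells out something the paper leaves implicit, namely that $A$-normality and $B$-normality coincide on paths in $G_A$. This is what justifies both that membership in $N(A)$ is decided by the symbols alone and that the induced scheme on $N(A)$ is Brown's scheme for $(G_A,R_A)$.
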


\begin{proof}
  Faithfulness of $F$ is immediate: as normal paths in $(G_A,R_A)$ are assumed to still be normal in $(G_B,R_B)$, they cannot reduce to the same thing without being already equal as paths. This immediately implies that $N(A) \to N(B)$ is a cofibration (i.e. a monomorphism).

    The key observation is then simply that by construction, the pairing between type $\I$ and type $\II$ cells does not change the list of symbols that appear in a cell, as a cell will be in $N(A)$ if and only if, (after normalization), all the symbols that appear in it are in $G_A$, hence the pairing preserves $N(A)$, and we conclude by \cref{lemma:restrict_scheme}.
\end{proof}

\section{Relation between critical cell and critical branching}
\label{sec:weak_vs_strong_criticallity}

In this section we relate the critical cells of Brown's \scheme with the ``critical $n$-branchings''. The only difficulty here is that it turns out there are two non-equivalent definitions of critical branching in the literature, which we will refer to as ``weakly critical'' and ``strongly critical''. The critical $n$-cells of Brown's \scheme as introduced in \cref{def:Browm_collapsing} correspond to what we call the \emph{strongly critical $(n-1)$-branching}, while the weakly critical branching is probably the more natural notion, which is used for example in \cite{guiraud2012higher}. We also show that it is possible to modify Brown's \scheme so that the critical $n$-cells correspond to the weakly critical $(n-1)$-branchings.

Our discussion of ``weakly critical'' and ``strongly critical'' branching is exactly the discussion in \cite{groves2006rewriting} of the comparison between Groves ``critical $n$-stars'', which corresponds to our weakly critical $n$-branching and what he calls the ``special critical $n$-stars'' or ``minimal non-overlapping critical $n$-stars'', which corresponds to our strongly critical $n$-branching and to Anick's $n$-chains from \cite{anick1986homology}. So we expect the results in this section might not be new to people already familiar with rewriting, but the author thought this might be useful to people interested in applications to $\infty$-category theory - and it will come up in some of our examples later.

In order to discuss this, we need to first do some ``clean-up'' in our rewrite system, and assume it is \emph{reduced} in the following sense:

\begin{definition}\label{def:reduced}
  A rewrite system $(G,R)$ is said to be reduced if:
  \begin{enumerate}
  \item If $p:\gamma \Rightarrow \gamma'$ is a rules in $R$, then no other rules can be applied to $\gamma$.  
  \item No rules of $R$ has a single generator as its domain.
  \end{enumerate}
\end{definition}

\begin{remark}
  If a convergent rewrite system is not reduced, we can make it both reduce and convergent without changing the category it presents by:
  \begin{enumerate}
  \item For any rule $p:\gamma \Rightarrow \gamma'$ such that another rule $p'$ can be applied to $\gamma$, we can remove $\gamma$ from the rules. Removing a rule preserve termination, and this change does not affect the set of normal paths, so it also preserves confluence and the presented category.
   \item For any rule $p:x \Rightarrow \gamma$ such that $x$ is a letter, then after the previous step $x$ cannot appear in the domain of any other rules. We can then remove $x$ from $G$ and replace $x$ by $\gamma$ in the target of any other rules. For any path in which $x$ appear, we can replace $x$ by $\gamma$ without changing its image in the presented category.
   \end{enumerate}

   In the rest of this section, we will always assume that our rewrite system is reduced.
  
\end{remark}

\begin{construction}\label{cstr:order_on_subrules}
  If a rewrite system $(G,R)$ is reduced, $p$ is a path in $G$ and $f_1,f_2$ two subrules of $p$, then the following are equivalents:
  \begin{enumerate}
  \item The start position of $f_1$ is strictly before the start position of $f_2$.
  \item The end position of $f_1$ is strictly before the end position of $f_2$.
  \end{enumerate}
  Moreover, if $f_1 \neq f_2$ then neither their start position or end position can be the same. Indeed, any configuration where this is not the case corresponds to an inclusion of $f_1$ as a subpath of $f_2$, or an inclusion of $f_2$ as a subpath of $f_1$, which would contradict the assumption that $(G,R)$ is reduced.

  We write $f_1 < f_2$ if the start (equivalently end) position of $f_1$ is strictly before the start (equivalently end) position of $f_2$. For any two such subpaths (that are domain of rules) we have $f_1 < f_2$ or $f_1=f_2$ or $f_2 < f_1$.

  If $f_1 < f_2$ we say that $f_1$ and $f_2$ overlap is their positions overlap, i.e. if the start position of $f_2$ is before or equal to the end position of $f_1$.

\end{construction}

\begin{definition}\label{def:weakly_crit_branching} Let $(G,R)$ be a rewrite system.
  \begin{enumerate}
  \item An $n$-Branching of a rewrite system is a path $p$ together with $n$ chosen subrules $(f_1 < \dots < f_n)$ of $p$.
  \item If $(p; f_1,\dots,f_n)$ and $(q; g_1,\dots,g_m)$ are an $n$-branching and an $m$-branching with $p$ and $q$ composable, then $(pq;f_1,\dots,f_n,g_1,\dots,g_m)$ is an $(n+m)$ branching.
   \item An $n$-branching $w$ is said to be ``weakly critical'' if it cannot be written as a composite of two branchings $w=pq$ as in the previous point unless $p=\varnothing$ or $q = \varnothing$.
  \end{enumerate}
\end{definition}

It is immediate that:

\begin{lemma}\label{lem:weakly_crit_branching}
  In a reduced rewrite system, an $n$-branching $(p;f_1,\dots,f_n)$ is weakly critical if and only if:
  \begin{enumerate}
  \item The start position of $f_1$ is the start of $p$.
  \item The end position of $f_n$ is the end of $p$.
  \item each $f_i$ overlap with $f_{i+1}$.
  \end{enumerate}    
\end{lemma}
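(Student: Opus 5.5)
We prove both directions directly from \cref{def:weakly_crit_branching} and the ordering of subrules from \cref{cstr:order_on_subrules}. Throughout, $(p;f_1,\dots,f_n)$ is an $n$-branching in a reduced system, with $f_1<\dots<f_n$. By \cref{cstr:order_on_subrules}, both the start and the end positions of the $f_i$ are strictly increasing in $i$. We encode a decomposition $p=uv$ of the path by a cut position $k$ along $p$, with $u$ the first $k$ letters.

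\textbf{Key observation.} A decomposition of the branching as $w=(u;f_1,\dots,f_j)(v;f_{j+1},\dots,f_n)$ is the same thing as a cut position $k$ such that:
\begin{itemize}
\item $f_1,\dots,f_j$ lie entirely in $u$, i.e. end at or before $k$;
\item $f_{j+1},\dots,f_n$ lie entirely in $v$, i.e. start at or after $k$.
\end{itemize}
Because end and start positions are both monotone in $i$, the only condition to check is that $\mathrm{end}(f_j)\le k\le \mathrm{start}(f_{j+1})$, with the natural conventions for $j=0$ and $j=n$. The decomposition is non-trivial exactly when $0<k<|p|$. One point needs care: if $u$ or $v$ is the empty path, we require that it carries no subrules. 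This holds automatically, since by reducedness no rule has an empty domain. (A rule with empty domain would apply to every path, contradicting reducedness applied to any other rule.)

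\textbf{Conditions imply weakly critical.} Assume (1)--(3) and suppose a non-trivial cut $0<k<|p|$ exists, with index $j$.
\begin{itemize}
\item If $j=0$, then $f_1$ starts at position $0$ by (1), so $\mathrm{start}(f_1)=0<k$, which is impossible.
\item If $j=n$, then $f_n$ ends at $|p|$ by (2), so $\mathrm{end}(f_n)=|p|>k$, which is impossible.
\item If $0<j<n$, overlap (3) gives $\mathrm{start}(f_{j+1})<\mathrm{end}(f_j)$. Overlap here means sharing at least one letter, matching the non-trivial $a$ in the definition of critical pair. This contradicts $\mathrm{end}(f_j)\le k\le\mathrm{start}(f_{j+1})$.
\end{itemize}

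\textbf{Weakly critical implies the conditions.} We argue by contrapositive, producing a non-trivial cut whenever a condition fails.
\begin{itemize}
\item If (1) fails, take $k=\mathrm{start}(f_1)>0$ with $j=0$. This gives $u$ non-empty carrying no branches.
\item If (2) fails, take $k=\mathrm{end}(f_n)<|p|$ with $j=n$.
\item If (3) fails for some $i$, take $k=\mathrm{end}(f_i)$ with $j=i$. Then $k>0$ because $f_i$ is non-empty. Also $k\le \mathrm{start}(f_{i+1})<|p|$, so the cut is non-trivial.
\end{itemize}

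The only real subtlety is fixing the meaning of ``overlap'' so that it means sharing at least one letter (non-trivial $a$). With this reading, the "position" convention in \cref{cstr:order_on_subrules} corresponds exactly to the inequalities above. If instead one reads ``before or equal'' as allowing the two rules merely to touch at a vertex, condition (3) must be read with strict inequality for the equivalence to hold. I would state this convention explicitly before giving the argument.
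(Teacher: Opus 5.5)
Your proof is correct and follows essentially the same route as the paper's, written out in more detail: a nontrivial decomposition is a cut between $f_j$ and $f_{j+1}$. The monotonicity of start and end positions coming from reducedness (\cref{cstr:order_on_subrules}) reduces everything to three cases (letters before $f_1$, letters after $f_n$, or a non-overlapping adjacent pair), and you are right to read ``overlap'' as sharing at least one letter.

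One small slip is in your parenthetical about empty domains. Reducedness alone does not exclude a rule $1_x \Rightarrow \gamma'$ when no other rule's domain passes through $x$. What excludes it is termination, which holds for the convergent systems this section works with, via $\gamma' = 1_x\gamma' \Rightarrow \gamma'\gamma' \Rightarrow \cdots$.
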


\begin{proof}
  Indeed, a decomposition of $p$ as a composite $p=p_1p_2$ of a $k$ branching and an $(n-k)$ branching with $p_1$ and $p_2$ non-trivial, would either correspond to some letters before the first rule if $k=0$, a few letters after the last rule if $n = k$, or an absence of overlap between rule $f_k$ and the rule $f_{k+1}$ if $0<k<n$. Here we are using that as the rewrite system is reduced, it follows from the observation made in \cref{cstr:order_on_subrules} that if $f_k$ and $f_{k+1}$ are non-overlapping then no rule $f_i$ for $i \leqslant k$ overlaps with any $f_j$ for $j \geqslant k+1$.
\end{proof}

\begin{construction}\label{cstr:cell_from_wcrit_branch}
  Given a weakly $n$-branching $(p;f_1,\dots,f_n)$ we can associate to it a unique $n+1$-cell $c(p;f_1,\dots,f_n) = (\omega_0,\dots,\omega_n)$ in $N|G,R|$ such that:
  \begin{enumerate}
  \item $p=\omega_0\dots\omega_n$
  \item $\omega_0$ is a single generator.
  \item the end point of the rule $f_i$ coincide with the end point of the subword $\omega_i$.
  \end{enumerate}

Indeed, the condition above clearly define the end point of each $\omega_i$, and hence as well their start point, as subpath of $p$. The end point of $f_n$, and hence of $\omega_n$ is indeed the end point of $p$, so the first condition is satisfied. we only need the lemma below to show that this indeed make sense:
  
\end{construction}

\begin{lemma}\label{lem:weakly_crit_cell_normal} Given a weakly critical $n$-branching $(p;f_1,\dots,f_n)$ in a reduced rewrite system,  all the $\omega_i$ in the previous construction are normal paths.\end{lemma}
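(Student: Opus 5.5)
The plan is to argue by contradiction using the ordering of subrules from \cref{cstr:order_on_subrules}. By \cref{cstr:cell_from_wcrit_branch}, $\omega_0$ is the first letter of $p$. For $i \geqslant 1$, $\omega_i$ is the subpath of $p$ running from just after the end of $f_{i-1}$ to the end of $f_i$; when $i=1$, "the end of $f_0$" is read as the end of the first letter. A single letter is always normal, so $\omega_0$ is fine. Note also that $f_1$ starts at the start of $p$ by \cref{lem:weakly_crit_branching}. Since no rule has a single generator as its domain, $f_1$ has length $\geqslant 2$, so it ends after the first letter and $\omega_1$ is nonempty. Each later $\omega_i$ is nonempty because $f_{i-1}<f_i$ forces strictly increasing end positions.

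Now suppose some $\omega_i$ with $i \geqslant 1$ is not normal. Then it contains a subrule $g$, viewed as a subpath of $p$ lying entirely inside $\omega_i$. So $g$ ends at or before the end of $f_i$, and $g$ starts strictly after the end of $f_{i-1}$ (for $i=1$, strictly after the first letter of $p$).

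Compare $g$ with $f_i$ using the total order of \cref{cstr:order_on_subrules}. The end of $g$ is at or before the end of $f_i$. If the ends coincide, then $g=f_i$, because distinct subrules have distinct end positions. But $f_i$ starts at or before the end of $f_{i-1}$ by the overlap condition of \cref{lem:weakly_crit_branching}, or at the start of $p$ when $i=1$. So $f_i$ is not contained in $\omega_i$, which is a contradiction. Hence $g<f_i$, and so the start of $g$ is strictly before the start of $f_i$. Combined with the above, $g$ starts strictly after the end of $f_{i-1}$, while $f_i$ starts at or before it. So $g$ starts after $f_i$ starts, which contradicts $g<f_i$. For $i=1$ the same argument works: $f_1$ starts at position $0$, so $g$ cannot start strictly before it.

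The key input is the fact that in a reduced system, start order and end order of subrules agree. This is exactly what excludes a rule nested strictly inside the window $\omega_i$. The main point to check carefully is the boundary bookkeeping. Overlap means the start of $f_{i+1}$ is at or before the end of $f_i$, measured in letter positions. I will fix a convention (positions counted as letter indices, with $\omega_i$ spanning letters $e_{i-1}+1,\dots,e_i$ where $e_j$ is the index of the last letter of $f_j$ and $e_0=0$). With that convention the strict and non-strict inequalities above match. Once this is settled, the contradiction is immediate.
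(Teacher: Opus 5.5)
Your proof is correct and is essentially the paper's argument: a subrule lying inside $\omega_i$ ends no later than $f_i$, but it starts strictly after $f_i$ starts (by the overlap condition, or because $f_1$ starts at the beginning of $p$), and reducedness forbids this. The differences are cosmetic: you phrase the contradiction through the agreement of start and end orders on subrules rather than as ``a strict subrule of $f_i$'', you treat $i=1$ by the same argument instead of via $f_1=\omega_0\omega_1$, and you spell out the case $g=f_i$ and the letter-index convention for overlap, both of which the paper leaves implicit.
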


\begin{proof} $\omega_0$ is normal because part of the assumption that $(G,R)$ is reduced is that all generators are normal. $\omega_1$ is normal because the rule $f_1$ ends at the end of $\omega_1$ and starts at the beginning of $p$, so $f_1 = \omega_0\omega_1$. Any subrule of $\omega_1$ would be a strict subrule of $f_1$, which is impossible as the rewrite system is reduced. For $i>1$, if $\omega_i$ contained a subrule $r$, then, when considered as a subrule of $p$,  we would have $r \leqslant f_i$ and the start of $r$ would be after the end of the rule $f_{i-1}$ (as $r$ is a subpath of $\omega_i)$. But as we are assuming that $(p;f_1,\dots,f_n)$ is weakly critical we have that $f_i$ starts before (or equal to) the end of $f_{i-1}$, so this would force $r$ to be a subrule of $f_i$, which is impossible as $(G,R)$ is reduced.
\end{proof}

\begin{example} \label{ex:en=>1_weaklycrit}
  Consider the rewrite system where $G$ has a single arrow $e: \bullet \to \bullet$ and a single rewrite rule $e^n \Rightarrow 1$. Then it is easy to see that for each $k$, there is a unique critical $k$-cell $c_k$ in Brown \scheme:

  \[ c_0 =() \qquad c_1=(e) \qquad c_2=(e,e^{n-1}) \qquad c_3 =(e,e^{n-1},e) \quad \dots \]
  \[ c_k = (e,e^{n-1},e,e^{n-1},e\dots ) \qquad \dots\]

  but there are significantly more weakly critical branching, for example the critical pair
  \[ e^{2n-1} = (e^{n})e^{n-1} = e^{n-1}(e^n) \]
  is a weakly critical $2$-branching, whose corresponding cell under \cref{cstr:cell_from_wcrit_branch} is $(e,e^{n-1},e^{n-1})$ and is a type $\II$ cell in the sense of Brown's \scheme as $e^{n-1}e^{n-1}$ has a reducible strict initial segment.

  More generally, a weakly critical $k$-branching necessarily starts with its first rule $f_1 = e^n$, but then $f_2$ is another $e^n$ that can be placed anywhere overlapping with $f_1$, which leaves $(n-1)$ possibilities (the overlap can be anything from $e^1$ to $e^{n-1}$), and $f_3$ can be anything overlapping with $f_2$, which leaves $(n-1)$ possibilities again, so in total we get $(n-1)^{k-1}$ different weakly critical $k$-branchings.

The corresponding weakly critical $(k+1)$-cells under \cref{cstr:cell_from_wcrit_branch} are all the $(k+1)$-cells of the form
 \[ (e,e^{n-1},e^{i_2},\dots,e^{i_k}) \]
 where each $0<i_j < n$.
  
\end{example}

\begin{definition}\label{def:weakly_crit_cell}
  We will say that an $n$-cells $(\omega_0,\dots,\omega_{n-1})$ of $N|G,R|$ is weakly critical if:
  \begin{itemize}
  \item $\omega_0$ is a single generator.
  \item For each $i>0$, there is a subrule of $\omega_0 \dots\omega_{n-1}$ that ends at the same place as $\omega_i$.
  \end{itemize}
\end{definition}

\begin{remark}
 Crucially, a weakly critical cell can be either type $\I$ if the rule that ends at the same place as $\omega_i$ is not included in $\omega_{i-1}\omega_i$ and it can be type $\II$ if this rule is included in $\omega_{i-1}\omega_i$ but there is another subrule earlier inside $\omega_{i-1} \omega_i$. Going back to \cref{ex:en=>1_weaklycrit}, the cell $(e,e^{n-1},e^2)$ is type $\II$ and the cell $(e,e^{n-1},e,e)$ is type $\I$ but both are weakly critical. They also corresponds to each other under the pariring function of Brown's \scheme.
\end{remark}

\begin{prop} For any reduced rewrite system, \cref{cstr:cell_from_wcrit_branch} induces a bijection between the set of weakly critical $n$-branchings in the sense of \cref{def:weakly_crit_branching} and \cref{lem:weakly_crit_branching} and the set of weakly critical $(n+1)$-cells in the sense of \cref{def:weakly_crit_cell}.\end{prop}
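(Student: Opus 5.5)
We have to check three things: the map of \cref{cstr:cell_from_wcrit_branch} lands in weakly critical $(n+1)$-cells, it is injective, and every weakly critical $(n+1)$-cell arises from it. In all three steps the reducedness of $(G,R)$ enters through \cref{cstr:order_on_subrules}: a subrule of a path is determined by either its start position or its end position.

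\textbf{Well-definedness.} Let $(p;f_1,\dots,f_n)$ be weakly critical. By \cref{lem:weakly_crit_cell_normal} every $\omega_i$ is normal, so $(\omega_0,\dots,\omega_n)$ is a simplex of $N|G,R|$. It is non-degenerate as soon as no $\omega_i$ is empty. We have $\omega_0$ a single letter. For $i\geqslant 1$, $\omega_i$ ends where $f_i$ ends, and $\omega_{i-1}$ ends where $f_{i-1}$ ends (or after the first letter when $i=1$). The end of $f_i$ is strictly after the end of $f_{i-1}$ because $f_{i-1}<f_i$. For $i=1$, $f_1$ starts at the start of $p$ and is not a single letter (condition (2) of \cref{def:reduced}), so it ends strictly after the first letter. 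Hence every $\omega_i$ is nonempty. The cell is then weakly critical in the sense of \cref{def:weakly_crit_cell}: $\omega_0$ is a generator, and $f_i$ is a subrule of $\omega_0\cdots\omega_n=p$ ending where $\omega_i$ ends.

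\textbf{Injectivity.} From the cell we recover $p=\omega_0\cdots\omega_n$. Each $f_i$ is the unique subrule of $p$ ending at the end position of $\omega_i$, since two distinct subrules never share an end position. So the branching is determined by its cell.

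\textbf{Surjectivity.} Let $(\omega_0,\dots,\omega_n)$ be a weakly critical cell and put $p=\omega_0\cdots\omega_n$. For each $i\geqslant1$ let $f_i$ be the subrule of $p$ ending at the end of $\omega_i$; it is unique, again by \cref{cstr:order_on_subrules}. The end positions strictly increase because the $\omega_i$ are nonempty, so $f_1<\dots<f_n$ and $(p;f_1,\dots,f_n)$ is an $n$-branching. It remains to check the three conditions of \cref{lem:weakly_crit_branching}.
\begin{enumerate}
\item $f_n$ ends at the end of $p$, since it ends with $\omega_n$.
\item $f_1$ starts at the start of $p$. It ends at the end of $\omega_1$, so it lies inside $\omega_0\omega_1$. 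If it began after the first letter, it would be a subrule of the normal word $\omega_1$, which is impossible.
\item $f_{i}$ overlaps $f_{i+1}$. If $f_{i+1}$ started strictly after the end of $f_i$, which is the end of $\omega_i$, then $f_{i+1}$ would lie inside $\omega_{i+1}$, again contradicting normality of $\omega_{i+1}$.
\end{enumerate}
Finally, applying \cref{cstr:cell_from_wcrit_branch} to this branching returns the original cell, because both have $\omega_0$ a single letter and the same end positions for the $\omega_i$.

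I expect the main care point to be this surjectivity step: one has to use the normality of the entries of a simplex to force the overlap and start conditions. It is also where one must note that each entry is a normal form and not just an arbitrary subword.
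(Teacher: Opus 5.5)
Your proof is correct and follows the same route as the paper. The image is weakly critical by \cref{lem:weakly_crit_cell_normal}; injectivity holds because in a reduced system a subrule is determined by its end position; and surjectivity uses normality of $\omega_1$ and $\omega_{i+1}$ to force $f_1$ to start at the beginning of $p$ and consecutive rules to overlap. The one addition is that you explicitly check the $\omega_i$ are nonempty, and you use non-degeneracy of the given cell to get $f_1<\dots<f_n$; the paper leaves this implicit because it only ever considers non-degenerate cells.
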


\begin{proof}
  From \cref{lem:weakly_crit_cell_normal} it is immediate that \cref{cstr:cell_from_wcrit_branch} always produce a weakly critical $(n+1)$-cell in the sense of \cref{def:weakly_crit_cell}. Moreover, the word $p$ can be recovered from $\omega_0\dots \omega_n$ and the endpoints of the $\omega_i$ for $i>0$ are the endpoints of the subrules $f_1,\dots,f_n$. As the rewrite system is reduced, any subrule is fully determined by its endpoint, so this shows that \cref{cstr:cell_from_wcrit_branch} is injective.

  Finally, given a weakly critical $(n+1)$-cell $(\omega_0,\dots,\omega_n)$ in the sense of \cref{def:weakly_crit_cell}, the word $p=\omega_0\dots \omega_{n}$, and let $f_i$ be the rules ending at the same place as $\omega_i$, which exists by \cref{def:weakly_crit_cell}. This defines an $n$-branching, we just need to check it is weakly critical, as its image by \cref{cstr:cell_from_wcrit_branch} will clearly be $(\omega_0,\dots,\omega_n)$. The rule $f_1$ needs to start at the beginning of $p$ because $\omega_0$ is just a single generator and so otherwise $f_1$ would be included in $\omega_1$ which would make $\omega_1$ not normal. Similarly if $f_i$ does not overlap with $f_{i-1}$ then $f_i$ would be included in $\omega_i$ which would make $\omega_i$ not normal. Finally $f_n$ ends at the end of $p$, so this concludes the proof.  
\end{proof}

\begin{prop}\label{prop:strongly_critical_branching} In a reduced rewrite system, a weakly critical branching $(p;f_1,\dots,f_n)$ corresponds under \cref{cstr:cell_from_wcrit_branch} to a critical cell if and only if for all $i$:

  \begin{enumerate}
  \item[$(\I_i)$] For $i>2$, $f_i$ does not overlap with $f_{i-2}$.
  \item[$(\II_2)$] $f_2$ is minimal, i.e. there are no subrules $r$ of $p$ such that $f_1 < r < f_2$.   
  \item[$(\II_i)$] For $i>2$, $f_i$ is minimal for condition $\I_i$. i.e. there are no subrules $r$ of $p$ such that $f_{i-1}<r<f_i$ and $r$ does not overlap with $f_{i-2}$.
  \end{enumerate}

Otherwise, if $i$ is the smallest index where either of these conditions fails, then $c(p;f_1,\dots,f_n)$ is type $\I$ if condition $\I_i$ fails and type $\II$ if $\I_i$ holds but $\II_i$ fails.
\end{prop}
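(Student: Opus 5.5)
The plan is to translate Brown's conditions $(C0)$, $(C1_i)$, $(C2_i)$ from \cref{def:Browm_collapsing}, applied to the cell $c(p;f_1,\dots,f_n)=(\omega_0,\dots,\omega_n)$ of \cref{cstr:cell_from_wcrit_branch}, index by index into conditions on the subrules of $p$. Once $(C1_i)\Leftrightarrow(\I_i)$ and, given this, $(C2_i)\Leftrightarrow(\II_i)$, both claims of the proposition follow at once. The clause on types only compares, at the first failing index, which condition fails. Condition $(C0)$ holds by construction, since $\omega_0$ is a single generator.

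\textbf{Step 1: which subrules lie in $\omega_{i-1}\omega_i$.} Since $\omega_{i-1}$ and $\omega_i$ are normal by \cref{lem:weakly_crit_cell_normal}, any subrule $r$ of $p$ contained in $\omega_{i-1}\omega_i$ must straddle the comma between them. In particular $r$ ends strictly after the end of $\omega_{i-1}$, which is the end of $f_{i-1}$, so $r>f_{i-1}$ by \cref{cstr:order_on_subrules}. Also $r$ ends no later than the end of $\omega_i$, which is the end of $f_i$, so $r\leqslant f_i$. For $i\geqslant 3$, $\omega_{i-1}$ starts just after the end of $\omega_{i-2}$, i.e. after the end of $f_{i-2}$. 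So "$r$ is contained in $\omega_{i-1}\omega_i$" is equivalent to: $f_{i-1}<r\leqslant f_i$ and $r$ does not overlap $f_{i-2}$. For $i=2$, $\omega_1$ starts after the first letter, and every $r>f_1$ starts after the start of $p$, so the condition is just $f_1<r\leqslant f_2$. For $i=1$, $\omega_0\omega_1=f_1$, and since the system is reduced both $(C1_1)$ and $(C2_1)$ hold.

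\textbf{Step 2: $(C1_i)\Leftrightarrow(\I_i)$.} If $(\I_i)$ holds, or if $i\leqslant 2$, then $f_i$ itself is contained in $\omega_{i-1}\omega_i$, so the composite is reducible. Conversely, suppose $f_i$ overlaps $f_{i-2}$ and some $r$ is contained in $\omega_{i-1}\omega_i$. Then $r<f_i$, so $r$ starts strictly after $f_i$ starts. But $f_i$ starts no later than the end of $f_{i-2}$, while $r$ starts after the end of $f_{i-2}$. This does not immediately contradict $r<f_i$, so the argument needs care here. What we do get is that $r$ lies strictly between $f_{i-1}$ and $f_i$ and avoids $f_{i-2}$. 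I expect this to be the delicate point.

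To settle it, I would return to the weak criticality of the branching. Both $f_{i-1}$ and $f_i$ overlap their predecessors, and subrules are totally ordered with strictly increasing start and end positions. From this I would derive that a rule starting after the end of $f_{i-2}$ and ending before the end of $f_i$ forces $f_i$ itself to start after the end of $f_{i-2}$. If that derivation fails, I would fall back to checking the type assignment directly. In that situation $(C2_i)$ fails because the initial segment ending at $r$ is reducible, and I would compare this with the intended ordering of the conditions in the statement.

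\textbf{Step 3: $(C2_i)\Leftrightarrow(\II_i)$, given $(C1_i)$.} A strict initial segment of $\omega_{i-1}\omega_i$ is reducible if and only if some contained subrule ends before the end of $\omega_i$, i.e. some contained $r<f_i$. By Step 1 this means there is an $r$ with $f_{i-1}<r<f_i$, not overlapping $f_{i-2}$ when $i>2$, which is exactly the negation of $(\II_i)$ (respectively $(\II_2)$). Finally, the footnote in \cref{def:Browm_collapsing} says $(C1_i)$ and $(C2_i)$ cannot fail simultaneously. So at the smallest failing index the cell is type $\I$ exactly when $(\I_i)$ fails, and type $\II$ exactly when $(\I_i)$ holds but $(\II_i)$ fails. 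This matches \cref{def:Browm_collapsing} verbatim.
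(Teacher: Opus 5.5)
Your strategy is the paper's: translate $(C1_i)$ into $(\I_i)$ and $(C2_i)$ into $(\II_i)$ index by index. Steps 1 and 3 are correct. The gap is the converse half of Step 2, namely that failure of $(\I_i)$ forces failure of $(C1_i)$, which you leave open. The type clause needs it: when $(\I_i)$ is the first condition to fail, the statement asserts type $\I$, i.e.\ that $(C1_i)$ fails rather than $(C2_i)$. Your fallback cannot rescue this. If $\omega_{i-1}\omega_i$ were reducible while $f_i$ overlaps $f_{i-2}$, the contained subrule would make $(C2_i)$ fail, and Brown's definition would then assign type $\II$, contradicting the statement. The ``critical iff'' part alone can in fact be assembled from your Steps 1 and 3, as you hint; only the type clause is left unproved.

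You got stuck because of a reversed inequality. In \cref{cstr:order_on_subrules}, $r<f_i$ means that $r$ starts strictly \emph{before} $f_i$, not after. With the correct reading the contradiction is immediate. A subrule $r$ contained in $\omega_{i-1}\omega_i$ starts after the end of $f_{i-2}$ and satisfies $r\leqslant f_i$. Since $f_i$ starts no later than the end of $f_{i-2}$, we get $r\neq f_i$, hence $r<f_i$. So $r$ starts before $f_i$, which starts no later than the end of $f_{i-2}$, which in turn precedes the start of $r$, which is absurd. The paper states the same fact as a containment. If $f_i$ overlaps $f_{i-2}$, then $f_i$ starts before $\omega_{i-1}$ and ends where $\omega_i$ ends. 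So $\omega_{i-1}\omega_i$ is a strict subpath of the domain of $f_i$, and it is normal because the system is reduced. This uses only reducedness, not weak criticality, so your proposed detour through weak criticality is unnecessary. With this two-line argument inserted, your proof is complete.
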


These will be refers to as ``strongly critical branching'', what the proposition says is that our previous construction restrict to a correspondence between critical $n$-cells and strongly critical $(n+1)$-branching.

\begin{proof}
  These two conditions correspond to the direct translation of conditions $(C1_i)$ and $(C2_i)$ of \cref{def:Browm_collapsing}:
  \begin{itemize}
  \item A failure of condition $\I_i$ (for $i>2$) means that $f_i$ overlaps with $f_{i-2}$, which means that $f_i$ starts before $\omega_{i-1}$ and hence $\omega_{i-1}\omega_i$ is a strict subpath of $f_i$. This means that $\omega_{i-1}\omega_i$ is irreducible as the rewrite system is reduced (so no rules are strict subpaths of $f_i$), i.e. condition $(C1_i)$ fails. Conversely if condition $\I_i$ holds, then $f_i$ is a subrule of $\omega_{i-1}\omega_i$ which is hence reducible and condition $(C1_i)$ holds. Finally, note that conditions $(C1_1)$ and $(C1_2)$ can never fail for a weakly critical cell, as $\omega_0 \omega_1$ is the rule $f_1$ so is never normal, and the rule $f_2$ cannot include the first letter of $p$, as otherwise it would strictly contain the rule $f_1$, which is impossible, so $f_2$ is a subrule of $\omega_1\omega_2$, which is hence reducible.
  \item A failure of condition $\II_i$ (for $i \geqslant 2$) means there is a rule $r$ that ends before the end of $f_i$ (i.e. the end of $\omega_i$) but does not overlap with $f_{i-2}$ (if $i>2$), i.e. a subrule of $\omega_{i-1} \omega_{i}$ that ends before the end of $\omega_i$. This exactly means that $\omega_{i-1} \omega_i$ has a strict initial segment that is reducible, i.e. a failure of condition $(C2_i)$. Finally, note that condition $(C2_1)$ can never fail for a weakly critical cell in a reduced rewrite system: as $f_1 = \omega_0\omega_1$, there can never be a subrule of $\omega_0 \omega_1$ other than $f_1$ itself.
  \end{itemize}
 
\end{proof}

\begin{prop}
 The pairing $c$ between type $\I$ and type $\II$ cells of Brown's \scheme from \cref{def:Browm_collapsing} restricts to a bijection between weakly critical type $\I$ and weakly critical type $\II$ cells.
\end{prop}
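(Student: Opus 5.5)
Since $c$ is already a bijection between type $\I$ and type $\II$ cells by \cref{prop:Brown_collaps}, it suffices to prove two things. First, if $x$ is a weakly critical type $\I$ cell, then $c(x)$ is weakly critical. Second, if $y$ is a weakly critical type $\II$ cell, then $c^{-1}(y)$ is weakly critical. The key observation is that $c$ and $c^{-1}$ do not change the total word $|x| = \omega_0\cdots\omega_{n-1}$. They only delete or insert one comma. Weak criticality (\cref{def:weakly_crit_cell}) asks that $\omega_0$ be a single generator and that every comma position after $\omega_0$ be the endpoint of some subrule of the total word. So the whole question is whether the removed comma or the inserted comma behaves correctly.

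\textbf{Direction $c$.} Let $x=(\omega_0,\dots,\omega_{n-1})$ be weakly critical of type $\I$, and let $i$ be the first index where $(C1_i)$ fails. Then $c(x)$ merges $\omega_{i-1}$ and $\omega_i$. If $i=1$, then $\omega_0\omega_1$ would be normal. But weak criticality gives a subrule of $p$ ending at the end of $\omega_1$, and since $\omega_0$ is a single letter it must start at the beginning of $p$ (otherwise it lies inside the normal word $\omega_1$). Hence $\omega_0\omega_1$ is reducible, a contradiction. So $i\geqslant 2$, and the first entry of $c(x)$ is still $\omega_0$. Every remaining comma of $c(x)$ is a comma of $x$, so it is still the endpoint of a subrule. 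Therefore $c(x)$ is weakly critical.

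\textbf{Direction $c^{-1}$.} Let $y=(\omega_0,\dots,\omega_{n-1})$ be weakly critical of type $\II$. Since $\omega_0$ is a generator, we are in the second case of the definition. So there is a first index $i$ at which $(C2_i)$ fails while all earlier $(C1),(C2)$ hold. Then $c^{-1}(y)$ splits $\omega_i = uv$, where $\omega_{i-1}u$ is the shortest reducible initial segment of $\omega_{i-1}\omega_i$. The only new comma sits at the end of $u$. I will show it is the endpoint of a subrule of $p$. Because $\omega_{i-1}u$ is reducible but no strictly shorter initial segment of $\omega_{i-1}\omega_i$ is, it contains a subrule, and any such subrule must end exactly at the end of $u$. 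This subrule of $\omega_{i-1}u$ is also a subrule of the total word $p$. The old commas are unchanged and the first entry is still $\omega_0$, so $c^{-1}(y)$ is weakly critical.

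\textbf{Main obstacle.} The main point to check carefully is the case $i=1$ in the first direction. There I must rule out that $c$ merges $\omega_0$ into $\omega_1$, which would destroy the condition that the first entry is a single generator. This is exactly where the reducedness assumption and the normality of $\omega_1$ are used. I also need to confirm that under $c^{-1}$ the cell stays non-degenerate, which holds because $v$ is nonempty (the segment is a strict initial segment). Everything else is bookkeeping of comma positions.
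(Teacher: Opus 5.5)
Your proof is correct, but it takes a different route from the paper's.

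\textbf{The paper's route.} It works through the dictionary with branchings. Via \cref{prop:strongly_critical_branching}, a weakly critical type $\II$ cell corresponds to a branching $(p;f_1,\dots,f_n)$ whose first defect is a subrule $r$ with $f_{i-1}<r<f_i$ not overlapping $f_{i-2}$. The paper shows that $c^{-1}$ inserts the minimal such $r$ into the branching. Conversely, on a type $\I$ cell where $f_{i-1}$ and $f_{i+1}$ overlap, $c$ deletes $f_i$. In both cases the overlapping chain, and hence weak criticality, survives.

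\textbf{Your route.} You stay at the level of cells. You reduce to showing that $c$ and $c^{-1}$ preserve weak criticality, which is valid because $c$ is already a bijection by \cref{prop:Brown_collaps}. You then observe that neither map renormalizes or changes the total word; each only deletes or inserts one comma. So only two things need checking: the first entry, and the single new comma.

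\textbf{What each route buys.} Your argument is shorter. It also never uses reducedness, so it holds for any convergent system with \cref{def:weakly_crit_cell} as the definition. In particular, your remark that the case $i=1$ is where reducedness enters is inaccurate. That case uses only that $\omega_0$ is a single letter and $\omega_1$ is normal: a subrule ending at the end of $\omega_1$ must then be all of $\omega_0\omega_1$. The paper's route instead gives an explicit description of the pairing in branching terms, namely which rule is added or removed. This fits the combinatorics of strongly critical branchings used elsewhere in that section. The price is that it depends on the reduced hypothesis and on \cref{prop:strongly_critical_branching}.
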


\begin{proof}
  By the proof of \cref{prop:strongly_critical_branching}, given a weakly critical type $\II$ cell, corresponding to the weakly critical branching $(p;f_1,\dots,f_n)$ where the first failure of the conditions from \cref{prop:strongly_critical_branching} is a subrule $r$ of $p$ such that $f_{i-1}<r<f_i$ but $r$ does not overlap with $f_{i-2}$, the pairing from \cref{def:Browm_collapsing} will add an additional cut to the word $p$ at the end of the smallest such rule $r$, i.e. add the rule $r$ to our branching to give $f_1 < f_2 <\dots <f_{i-1} < r < f_i < \dots <f_n$. This is now a type $\I$ cell, as before $f_i$ conditions $\I_i$ and $\II_i$ always hold, and $f_{i-1} < r < f_i$ all overlap, i.e. condition $\I_{i+1}$ fails. Conversely, for any weakly critical type $\I$ cell $(\omega_0,\dots,\omega_{n-1})$, it corresponds by \cref{prop:strongly_critical_branching} to a weakly critical branching $(p;f_1,\dots,f_n)$ where the first failure of the conditions of \cref{prop:strongly_critical_branching} is that $f_{i-1}$ and $f_{i+1}$ overlap. As this is the first such failure, it means in particular that $f_i$ does not overlap with $f_{i-2}$ and $f_i$ is the smallest subrule of $p$ such that $f_{i-1} < f_i$ does not overlap with $f_{i-2}$. It follows that $(f_1,\dots,f_{i-1},f_{i+1},\dots,f_n)$ corresponds to a type $\II$ weakly critical cell whose associated type $\I$ cell, as in the previous half of the proof, would be $(f_1,\dots,f_{i-1},f_i,f_{i+1},\dots,f_n)$, which concludes the proof.\end{proof}

This immediately implies:

\begin{cor}\label{cor:weakly_crit_scheme}
  For a category $\Ccal$ presented by a reduced convergent rewrite system $(G,R)$, there is \scheme on $N(\Ccal)$ where the critical $n$-cells are the weakly critical $n$-cells of $N(\Ccal)$, i.e. corresponds to the weakly critical branching of $(G,R)$, and the rest of the structure is defined as in \cref{def:Browm_collapsing} for the other cells. 
\end{cor}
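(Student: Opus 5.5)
We define the modified scheme directly from Brown's scheme of \cref{def:Browm_collapsing}. Declare a non-degenerate cell of $N(\Ccal)$ critical if it is weakly critical in the sense of \cref{def:weakly_crit_cell}. A cell that is not weakly critical keeps its Brown type ($\I$ or $\II$), and the pairing $c$ is restricted to type $\I$ cells that are not weakly critical. We then check the two axioms of \cref{def:qcat_scheme}.

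The first step is to see that this data is well defined. Every Brown-critical cell is weakly critical: condition $(C0)$ gives the first clause of \cref{def:weakly_crit_cell}, and $(C1_i)$, $(C2_i)$ together say that the minimal reducible initial segment of $\omega_{i-1}\omega_i$ is all of it. That segment ends with a subrule ending exactly where $\omega_i$ ends. So the new critical set contains the old one, and the remaining cells keep a Brown type. The preceding proposition says that $c$ restricts to a bijection between weakly critical type $\I$ cells and weakly critical type $\II$ cells. Since $c$ is a bijection between all type $\I$ and all type $\II$ cells, it therefore also restricts to a bijection between the non-weakly-critical type $\I$ cells and the non-weakly-critical type $\II$ cells. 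This is the one point that needs care, and it reduces to the set-theoretic fact that a bijection preserving a subset also preserves its complement.

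The second step is the two axioms. Axiom (1), that $c(x)=d_ix$ for a unique inner $i$, is a property of each individual pair $(x,c(x))$. It is inherited verbatim from \cref{prop:Brown_collaps}. For axiom (2), the generating relation of the new scheme, $x\le y$ iff $x=d_jc^{-1}(y)$ with $x,y$ type $\II$ in the new scheme, is a subrelation of Brown's relation restricted to the new type $\II$ cells. A subrelation of a well-founded relation is well-founded, so well-foundedness also follows from \cref{prop:Brown_collaps}.

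Finally, we identify the critical cells. By the bijection proposition between weakly critical branchings and weakly critical cells (via \cref{cstr:cell_from_wcrit_branch}), the critical $n$-cells correspond to the weakly critical $(n-1)$-branchings of $(G,R)$. I expect no real obstacle. The only subtle point is the inclusion ``Brown-critical implies weakly critical'', which uses that $(G,R)$ is reduced.
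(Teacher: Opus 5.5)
Your proposal is correct and is essentially the paper's argument: the paper deduces the corollary immediately from the preceding proposition that Brown's pairing $c$ restricts to a bijection between weakly critical type $\I$ and weakly critical type $\II$ cells, and you have just made explicit the routine checks it leaves implicit (Brown-critical cells are weakly critical, $c$ then restricts to a bijection between the complements, axiom (1) is inherited, and well-foundedness passes to a subrelation). One small correction: the inclusion of Brown-critical cells among weakly critical ones does not use reducedness, since by $(C1_i)$ and $(C2_i)$ any subrule of $\omega_{i-1}\omega_i$ must end at its end; reducedness only enters through the propositions you cite.
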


Another consequence of this characterization of strongly critical branchings given by \cref{prop:strongly_critical_branching}, as well as a good reason to use strongly critical branchings (or equivalently critical cells) compared to weakly critical ones, is that strongly critical branchings are often easier to list combinatorially:

\begin{cor}\label{cor:comb_strongly_critical} Given a reduced convergent rewrite system and $r_1,\dots,r_n,r_{n+1}$ a sequence of rewrite rules, then

  \begin{enumerate}
  \item There is at most one strongly critical branching whose rewrite rules are $r_1 < r_2 < \dots <r_n$.
  \item If such a strongly critical branching exists, call it $m$, then there will be one with rewrite rules $r_1 < \dots <r_n < r_{n+1}$ if and only if, given $w$ the part of $r_{n}$ at the end of $m$ that is not included in $r_{n-1}$, we can find an overlap between $w$ and $r_{n+1}$ in that order.
  \end{enumerate}
\end{cor}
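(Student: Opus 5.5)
The plan is to work entirely through the characterization of strongly critical branchings in \cref{prop:strongly_critical_branching}, together with the rigidity of subrules in a reduced system from \cref{cstr:order_on_subrules}. That rigidity says a subrule of a path is determined by its end position (equivalently its start position), and distinct subrules are strictly ordered by both.

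\textbf{Part (1).} We show by induction on $k$ that the rules $r_1,\dots,r_k$ determine the path $m_k$ and the positions of $f_1,\dots,f_k$ in it. This determines the whole strongly critical $k$-branching, so it gives uniqueness.

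For $k=1$, weak criticality (\cref{lem:weakly_crit_branching}) forces $p=f_1=\operatorname{dom}(r_1)$.

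For the inductive step, suppose $m_{k-1}$ and the positions of $f_1,\dots,f_{k-1}$ are determined. The path $m_k$ must end at the end of $f_k$, so $m_k = m_{k-1} w'$ where $w'$ is the part of $f_k$ sticking out past $f_{k-1}$. The constraints on $f_k$ are as follows:
\begin{itemize}
\item it overlaps $f_{k-1}$ (weak criticality);
\item it does not overlap $f_{k-2}$ (condition $(\I_k)$);
\item it is minimal (condition $(\II_k)$): no subrule $r$ with $f_{k-1}<r<f_k$ avoiding $f_{k-2}$.
\end{itemize}
Given the label $r_k$, the only remaining freedom is the start position of $f_k$. That start position must lie inside the portion $w$ of $f_{k-1}$ lying after the end of $f_{k-2}$ (for $k=2$, strictly after the start of $f_1$).

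Suppose two admissible start positions $s<s'$ existed, giving candidate branchings $m$ and $m'$. Take the one with the later start, $s'$. In $m'$, consider the subpath consisting of the first $|f_k|$ letters starting at $s$. If it lies within $m'$ (i.e.\ the occurrence with the smaller overlap ends before the other), then it is an occurrence of $\operatorname{dom}(r_k)$ that also avoids $f_{k-2}$ and satisfies $f_{k-1}<r<f_k$, which contradicts minimality $(\II_k)$ for $m'$.

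\textbf{Main obstacle.} The delicate point is that the letters of the would-be occurrence at position $s$ inside $m'$ must actually spell $\operatorname{dom}(r_k)$. This is not automatic, because the letters past $f_{k-1}$ differ between $m$ and $m'$. I expect to resolve this by a careful comparison of overlaps: both candidates agree on the prefix $m_{k-1}$, and the overlap of $\operatorname{dom}(r_k)$ with $w$ is read off the same letters. If that route fails, the fallback is to state uniqueness as ``at most one per choice of overlap'' and then use minimality to pick the largest admissible overlap, that is, the earliest start.

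\textbf{Part (2).} Again by \cref{prop:strongly_critical_branching}, extending the branching $m$ by $r_{n+1}$ amounts to choosing an occurrence $f_{n+1}$ with the following properties:
\begin{itemize}
\item it overlaps $f_n$;
\item it does not overlap $f_{n-1}$, i.e.\ it starts inside $w$;
\item it satisfies minimality $(\II_{n+1})$.
\end{itemize}
The first two properties together are exactly an overlap between a suffix of $w$ and a prefix of $\operatorname{dom}(r_{n+1})$, which gives necessity. For sufficiency, among all such overlaps choose the one with earliest start, which satisfies minimality; the earlier conditions $(\I_i),(\II_i)$ for $i\le n$ are unaffected, since they only involve subrules ending before $f_n$. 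Finally, by \cref{lem:weakly_crit_cell_normal} the resulting cell is well defined.
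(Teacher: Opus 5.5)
\textbf{There is a genuine gap, and it cannot be closed: item (1) is false as stated.} Consider the one-object system with generators $x,a,c$ and rules $xaa\Rightarrow xa$ and $aac\Rightarrow ac$.
\begin{itemize}
\item It is reduced.
\item It is terminating, since both rules shorten words.
\item Its only critical pairs are $xaac$ and $xaaac$. In each, the two one-step reducts coincide (they are $xac$ and $xaac$ respectively), so the system is convergent.
\end{itemize}
Both $(xaac;\,xaa,aac)$ and $(xaaac;\,xaa,aac)$ are strongly critical $2$-branchings on the rules $r_1<r_2$. In $xaaac$ the only subrules are the two chosen ones, so $(\II_2)$ holds. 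Equivalently, $(x,aa,c)$ and $(x,aa,ac)$ are both critical $3$-cells of Brown's scheme: the strict initial segments $a,aa,aaa$ of $aaac$ are all normal.

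This is exactly the mechanism you isolated. The would-be occurrence at the earlier start reads $aaa$ inside $m'=xaaac$, and $aaa$ is not a rule domain. Your fallback does not rescue the claim. ``At most one per choice of overlap'' is a different, trivial statement. ``Take the largest admissible overlap'' is precisely the paper's own argument (place $r_{n+1}$ at the leftmost admissible position). That argument tacitly assumes the leftmost placement still produces a subrule inside the word built from a later placement, which is the step that fails here.

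\textbf{Part (2) has an independent gap.} Choosing the earliest start does exclude a blocking occurrence of $\operatorname{dom}(r_{n+1})$ itself; that part of your argument is fine. But $(\II_{n+1})$ forbids \emph{every} subrule starting in $w$ strictly before $f_{n+1}$, and a different rule can sit there. Take generators $x,a,b,c,d,P,Q,R$ and the rules
\[ xab\Rightarrow P,\quad abc\Rightarrow Q,\quad bcd\Rightarrow R,\quad Pc\Rightarrow xQ,\quad Qd\Rightarrow aR. \]
\begin{itemize}
\item The system is reduced.
\item It is terminating: give weight $2$ to each of $x,a,b,c,d$ and weights $5,4,3$ to $P,Q,R$; every rule strictly lowers total weight.
\item Its critical pairs $xabc$, $abcd$ and $xabcd$ are all joinable.
\end{itemize}
Here $m=xab$ is strongly critical, and its $w$ overlaps $bcd$ in $b$. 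But the unique candidate $(xabcd;\,xab,bcd)$ contains the subrule $abc$ strictly between $f_1$ and $f_2$, so the cell $(x,ab,cd)$ is of type $\II$. Hence no strongly critical branching on $xab<bcd$ exists, and the ``if'' direction of (2) fails too.

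\textbf{What does hold.} Your necessity argument and your remark that $(\I_i),(\II_i)$ for $i\leqslant n$ are unaffected by the extension are correct. The provable statement is this: strongly critical extensions of a \emph{given} $m$ by $r_{n+1}$ correspond to those overlaps of $w$ with $\operatorname{dom}(r_{n+1})$ whose resulting word has no subrule starting in $w$ strictly before $f_{n+1}$. Several such extensions may exist.
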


A consequence of this proposition is that if a rewrite system has $n$ rewrite rules, then for $k>1$, Brown's \scheme has at most $n^{k-1}$ critical cells.

\begin{proof} This is proved by induction on $n$: item $(1)$ is clearly true for $n=1$; if item $(1)$ is true for some $n$, then item $(2)$ for $n$ follows immediately by the characterization of strongly critical branching given in \cref{prop:strongly_critical_branching}. And it also follows that the only way to make $r_1,\dots,r_{n+1}$ into a strongly critical branching is to place $r_{n+1}$ at the leftmost place at the end that overlaps with $r_n$ but not with $r_{n-1}$, hence this proves item $(2)$ for $n+1$, and by induction this establishes it for all $n$.
  
\end{proof}

\section{Examples and first applications}
\label{sec:example_main}

\begin{example}\label{ex:Free_graphs}
  Let $G$ be a graph. The rewrite system $(G,\varnothing)$ is trivially convergent and present the free category $G^*$ on the graph $G$, i.e. the category of paths in $G$. Our result applied to this rewrite system gives that we have an inner anodyne cofibration
  \[ N(G) \cto N(G^*) \]
  where $N(G^*)$ is the usual nerve of the category $G^*$ of paths in $G$ and $N(G)$ is the subsimplicial sets that only contains the $0$-cells (the vertex of $G$), the $1$-cells corresponding to edges of $G$, and all the degeneracies of the previous cells.

  That is, for any quasicategory $\Xcal$, we have an equivalence between the category $\Fun(G^*,\Xcal)$ of functors from $G^*$ to $\Xcal$ and the category $\Fun(N(G),\Xcal)$ whose objects can be described as morphisms of graphs from $G$ to the underlying graph of $\Xcal$ (i.e. the graph $\Xcal([1]) \rightrightarrows \Xcal([0])$).

  This can be seen as a special case of Proposition 5.25 of \cite{heuts2022simplicial} which we will recover in the following \cref{ex:Freely_adding_arrow}. A further special case of this example is that the spine inclusion $\text{Sp}[n] \to \Delta[n]$ is an inner anodyne cofibration.
\end{example}

\begin{example}\label{ex:Freely_adding_arrow}
  Let $\Ccal$ be an arbitrary category. Let $a,b$ be two objects of $\Ccal$. We consider the category $\Dcal = \Ccal[a \overset{g}{\to} b]$ obtained by freely adding an arrow from $a$ to $b$ to $\Ccal$.

  We can start from any convergent rewrite system for $\Ccal$, for example the standard rewrite system from \cref{ex:std_rewrite}, and add to it a single generator ``$g$'' from $a$ to $b$, which produces a presentation of $\Dcal = \Ccal[a \overset{g}{\to} b]$.

  As this new system has the exact same set of rules as the one for $\Ccal$, it is still convergent, and its critical cells are exactly the same as the critical cells of $\Ccal$ together with just one more $1$-dimensional critical cell corresponding to $g$. By \cref{lemma:Po_of_scheme} this proves that our pushout of categories:
\[\begin{tikzcd}
    \{0,1\} \ar[d,cto] \ar[r,"(a{,}b)"] \ar[dr,phantom,"\ulcorner"{description,very near end}]& \Ccal \ar[d,cto] \\
    (0 \to 1) \ar[r] & \Dcal 
  \end{tikzcd}\]
is also a homotopy pushout of quasicategories. This is exactly Proposition 5.25 of \cite{heuts2022simplicial}. We can do the exact same thing adding any set of new generators simultaneously.
\end{example}

This can be generalized even further:

\begin{example}
  Consider two categories $\Ccal_1$ and $\Ccal_2$, together with a set $S$ and two functions\footnote{We mean functions from $S$ to the sets of objects.} $S \to \Ccal_1$ and $S \to \Ccal_2$. Up to replacing the categories by equivalent ones, we can freely assume that these functions are injective. Starting from convergent rewrite systems $(G_1,R_1)$ and $(G_2,R_2)$ for $\Ccal_1$ and $\Ccal_2$, for example their standard rewrite system, we can form a convergent rewrite system for the pushout $\Ccal_1 \coprod_S \Ccal_2$: We simply take the ``pushout'' of the underlying graphs and the disjoint union of the sets of rewrite rules.

  The resulting rewrite system $(G_1 \coprod_S G_2, R_1 \coprod R_2)$ is convergent:

  \begin{itemize}
  \item It is terminating as an infinite sequence of reduction steps would need to have either an infinite number of steps from $R_1$ or from $R_2$; say it has an infinite number of steps in $R_1$. Our starting path looks like an alternating composite $a_1b_1\dots a_nb_n$ where the $a_i$ are paths in $G_1$ and the $b_i$ are paths in $G_2$. During our infinite sequence of steps, some of the $b_i$ might reduce to the identity, and some might not (for example, maybe they do not even have the same source and target!). But we can simply remove all the $b_i$ that reduce to identities at some point, and the sequence of all $R_1$ steps in the infinite sequence can be represented as acting directly on the resulting path where these $b_i$ have been removed, hence providing an infinite sequence of $R_1$-only reductions, which contradicts the fact that $(G_1,R_1)$ is terminating.
  \item It is confluent because no critical pairs can mix a rule in $R_1$ and a rule in $R_2$ as they never overlap. So all critical pairs are confluent, and we can conclude using Newman's \cref{lem:newman}.
  \end{itemize}

  Finally, since rules for $R_1$ and $R_2$ can never overlap, for cells of dimension $>0$, critical cells of $N(\Ccal_1 \coprod_S \Ccal_2)$ are just the critical cells of $\Ccal_1$ and $\Ccal_2$ taken separately, and as all $0$-dimensional cells are critical we recover the set $|\Ccal_1| \coprod_S |\Ccal_2|$ of objects. It follows that the pushout of $1$-categories

\[  \begin{tikzcd}
    S \ar[r] \ar[d] \ar[dr,phantom,"\ulcorner"{very near end,description}] & \Ccal_2 \ar[d] \\
   \Ccal_1 \ar[r]&  \Ccal_1 \coprod_S \Ccal_2
 \end{tikzcd}\]
is already a homotopy pushout of quasicategories. We have shown:
\end{example}

\begin{prop}\label{prop:pushout_of_1cat}
  A homotopy pushout of $\infty$-categories of the form:
  \[\begin{tikzcd}
    S \ar[r] \ar[d] & \Ccal_1 \ar[d] \\
    \Ccal_2 \ar[r] & P
  \end{tikzcd}\]
where $S$ is a set and $\Ccal_1$ and $\Ccal_2$ are $1$-categories, then the pushout $P$ is again a $1$-category.
\end{prop}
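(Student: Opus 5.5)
The plan is to reduce the statement to the strict pushout computation carried out in the preceding example. First, since $S$ is a discrete set, any map from $S$ to the $\infty$-category $\Ccal_i$ is determined up to equivalence by a function from $S$ to the set of objects of $\Ccal_i$. By replacing $\Ccal_1$ and $\Ccal_2$ with equivalent $1$-categories, I will assume these functions are injective. Concretely, I would replace $\Ccal_i$ by the category whose objects are $S \coprod (\text{objects of } \Ccal_i)$, with morphisms pulled back along the evident map to the objects of $\Ccal_i$. This category is equivalent to $\Ccal_i$, so its nerve is Joyal equivalent to $N(\Ccal_i)$. Since homotopy pushouts are invariant under replacing the vertices of the span by equivalent objects, the homotopy pushout $P$ is unchanged up to equivalence.

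Next, I would model the span at the level of simplicial sets as $N(\Ccal_2) \leftarrow S \to N(\Ccal_1)$, where both maps are now monomorphisms, hence cofibrations in the Joyal model structure. The strict pushout of simplicial sets $N(\Ccal_1) \coprod_S N(\Ccal_2)$ is then a homotopy pushout. By the example just before the statement, this strict pushout maps to $N(\Ccal_1 \coprod_S \Ccal_2)$ by a weak equivalence. The underlying argument is this: equip $\Ccal_1 \coprod_S \Ccal_2$ with the convergent rewrite system $(G_1 \coprod_S G_2, R_1 \coprod R_2)$ built from the standard rewrite systems. Its Brown critical cells of positive dimension are exactly those of $\Ccal_1$ and of $\Ccal_2$, and its $0$-cells are the objects. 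By \cref{lem:extension_of_rw}, Brown's \scheme restricts to each $N(\Ccal_i)$. Then \cref{lem:bij_on_crit} or \cref{cor:h_pushout_from_rw}, applied to the cofibration $S \cto N(\Ccal_1)\coprod_S N(\Ccal_2)$, which carries the \scheme glued from the two pieces via \cref{lemma:Po_of_scheme}, shows the comparison map is a weak equivalence. Hence $P \simeq N(\Ccal_1 \coprod_S \Ccal_2)$, which is the nerve of a $1$-category.

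The main obstacle is the application of \cref{lem:extension_of_rw}. That lemma requires normal paths in $(G_i,R_i)$ to remain normal in the combined system, which holds because rules from $R_1$ and $R_2$ involve disjoint edge sets. It also requires checking that the combined scheme on $N(\Ccal_1 \coprod_S \Ccal_2)$ restricts simultaneously to both $N(\Ccal_i)$. I would verify this directly: the pairing never changes the letters appearing in a cell, so a cell using only letters from $G_i$ is paired with another such cell. The remaining points, namely the convergence of the combined system and the identification of its critical cells, are exactly the verifications already carried out in the example, and I would simply cite them.
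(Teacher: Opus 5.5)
Your proposal is correct and follows the paper's own argument, given in the example preceding the statement. You make $S \to \mathrm{Ob}(\Ccal_i)$ injective by passing to equivalent categories, and you use the combined rewrite system $(G_1 \coprod_S G_2, R_1 \coprod R_2)$, whose positive-dimensional Brown-critical cells are exactly those of $\Ccal_1$ and $\Ccal_2$; you then conclude via \cref{cor:h_pushout_from_rw}/\cref{lem:bij_on_crit}, spelling out the restriction through \cref{lem:extension_of_rw} and the gluing through \cref{lemma:Po_of_scheme}, which the paper leaves implicit in its ``it follows that''.
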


\begin{remark}
  The proposition can be alternatively phrased in terms of strict pushout of category with the assumption that the map $S \to \Ccal_1$ is injective on objects. It can also be deduced from \cref{thm:flat_rewrite} in the next section as explained in \cref{rk:flat_rewrite_imp_pushout_sets}.
\end{remark}

\begin{prop}\label{prop:adding_retract}
  Let $\Ccal$ be a $1$-category and let $i:A \to B$ be a monomorphism, and $t:A \to X$ an arrow in $\Ccal$. Let $\Dcal$ be the $\infty$-category which is generated from $\Ccal$ by freely adding an arrow $s:B \to X$ making the triangle commute:
\[
  \begin{tikzcd}
    A \ar[d,"i",hook] \ar[r,"t"] & X\\
    B \ar[ur,"s",dotted]
  \end{tikzcd}
\]
  i.e. the homotopy pushout:
  \[\begin{tikzcd}
    {\Lambda^0[2]} \ar[d,cto] \ar[r,"v"] & \Ccal \ar[d,cto] \\
    {\Delta[2]} \ar[r]& \Dcal 
  \end{tikzcd} \]
  where $v$ is the morphism sending $0 \to 1$ to $i$ and $0\to 2$ to $t$. Then $\Dcal$ is again a $1$-category.
\end{prop}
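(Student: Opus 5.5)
We will present the $1$-categorical pushout $\Dcal_1 := \Ccal[s]/(is = t)$ by a convergent rewrite system that extends one for $\Ccal$. We then compare critical cells through \cref{cor:h_pushout_from_rw}, in the same way as \cref{ex:Freely_adding_arrow}. Start from the standard rewrite system $(G,R)$ of $\Ccal$ (\cref{ex:std_rewrite}). Add a generator $s:B\to X$ and one new rule $[i]\, s \Rightarrow [t]$. Call the result $(G',R')$. It presents $\Dcal_1$ by construction.

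\textbf{Convergence.} Termination holds because every rule strictly decreases length. For confluence we use Newman's \cref{lem:newman} and only need to inspect the new critical pairs. These are overlaps of some rule ending in $[i]$ with the rule $[i]s$. There are two kinds. The pair $[f][i]s$ reduces to $[fi]s$ and to $[f][t]\Rightarrow[ft]$. The pair $[\id_A][i]s$ is of the same shape with $f=\id_A$.

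The crucial point is that $[fi]s$ is \emph{normal} unless $fi$ is of the form $i$, since no rule starts with $[g]s$ for $g\neq i$. So local confluence would fail for $fi\ne i$. We therefore enlarge the system. For each arrow $g:Y\to B$ that factors as $g=fi$, add the rule $[g]s\Rightarrow[ft]$. This is well defined because $i$ is a monomorphism, so $f$ is uniquely determined by $g$. This is exactly where the monomorphism hypothesis enters.

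With the rules $[fi]s\Rightarrow[ft]$ for all $f$, the critical pairs become the following. First, $[h][fi]s$ reduces to $[hfi]s\Rightarrow[hft]$ and to $[h][ft]\Rightarrow[hft]$, so these agree. Second, overlaps with identities are trivial. There is no overlap on the right, since $s$ is a terminal-type letter that starts no rule. Hence the extended system is convergent.

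\textbf{Critical cells.} The normal paths are the normal paths of $\Ccal$, together with paths $[g]s$ where $g$ does not factor through $i$, and $s$ alone. By \cref{lem:extension_of_rw}, $N\Ccal\subseteq N\Dcal_1$ and Brown's \scheme restricts, so the cells of $N\Dcal_1$ not in $N\Ccal$ carry a \scheme. We now compute the new critical cells, i.e.\ those involving $s$.
\begin{itemize}
\item The $1$-cell $(s)$ is critical.
\item For a $2$-cell $(\omega_0,\omega_1)$ we need $\omega_0$ a single letter with $\omega_0\omega_1$ minimally reducible. The new options are $([fi],s)$ for every $f$. So there are many critical $2$-cells, not just $([i],s)$.
\item The $3$-cells $([h],[fi],s)$ are critical as well. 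These come from the standard system's critical pairs overlapping the new rules.
\end{itemize}

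So a naive bijection with the critical cells of $\Delta[2]$ relative to $\Lambda^0[2]$ fails. That comparison would require the new critical cells to be only $(s)$ and $([i],s)$.

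\textbf{Main obstacle.} The remaining problem, which I expect to be the main difficulty, is choosing the presentation well. I would first try replacing the standard system by a smaller one where $\Ccal$'s arrows are generated so that the only overlap with the new rule is with $[i]$ itself. For instance, one could use generators $[f]$ only for $f$ not of the form $f'i$, plus $[i]$, with normal forms $[f'][i]$, making $i$ a "letter" at the end. Monicity of $i$ is what makes this factorization unique, and hence gives a convergent presentation.

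In that setting, the new rule $[i]s\Rightarrow[t]$ overlaps only with rules ending in $[i]$. One would then arrange, through the restriction in \cref{lem:extension_of_rw} or a modified pairing as in \cref{cor:weakly_crit_scheme}, that the extra critical cells $([fi],s)$ and $([h],[fi],s)$ are paired off as type $\I$/$\II$ cells. Only $(s)$ and $([i],s)$ should remain critical. These match the nondegenerate cells of $\Delta[2]$ not in $\Lambda^0[2]$, namely the edge $1\to2$ and the $2$-cell.

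With this matching, \cref{cor:h_pushout_from_rw}, applied to the trivial \scheme on $\Lambda^0[2]\cto\Delta[2]$ and the restricted Brown \scheme on $N\Ccal\cto N\Dcal_1$, shows that the strict pushout $\Dcal_1$ is the homotopy pushout $\Dcal$. Hence $\Dcal$ is a $1$-category.
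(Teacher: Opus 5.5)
Your overall strategy is the paper's: present the strict pushout $\Dcal_1$ by a convergent system extending one for $\Ccal$, restrict Brown's \scheme via \cref{lem:extension_of_rw}, and conclude with \cref{cor:h_pushout_from_rw}. You are also right that the standard rewrite system cannot work. But there is a genuine gap: the proof stops at the decisive step, and the mechanism you anticipate there is not the right one.

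\textbf{The missing idea.} The presentation of $\Ccal$ must have $[i]$ as a generator that is \emph{never the last letter of a rule's domain}. Concretely:
\begin{itemize}
\item take as generators $[i]$ and the non-identity arrows not factoring through $i$;
\item take as normal forms these letters and the words $[x][i]$;
\item put one rule $[f][g]\Rightarrow(\text{normal form of } fg)$ for each composable pair of generators with $g\neq i$.
\end{itemize}
Then there are \emph{no} rules ending in $[i]$, not merely ``only some''. So $[i]s\Rightarrow[t]$ overlaps with no rule at all. There are no new critical pairs, so confluence of the extension is immediate from \cref{lem:newman}, and there are no new critical branchings.

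\textbf{Consequence for the critical cells.} Since all rule domains have length $2$, Brown's critical cells are exactly the tuples of single letters $(a_0,\dots,a_{n-1})$ in which every $a_{j-1}a_j$ is a rule domain. No rule starts with $s$ and none ends with $[i]$, so the only new critical cells are $(s)$ and $([i],s)$. The cells $([fi],s)$ and $([h],[fi],s)$ that you planned to ``arrange'' away are simply not critical here. For $f\neq\id$, $fi$ is not a generator and its normal form is $[f][i]$. The cell $([f][i],s)$ has a non-letter first word, so it is type $\II$, and Brown's own pairing matches it with the type $\I$ cell $([f],[i],s)$. No modified pairing is needed. Moreover, \cref{cor:weakly_crit_scheme} would go the wrong way: weakly critical cells include all of Brown's critical cells, so that scheme has \emph{more} critical cells, not fewer.

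\textbf{Termination is not automatic.} You also do not verify that this presentation of $\Ccal$ is convergent. Rules $[f][g]\Rightarrow[h][i]$ and $[i][g]\Rightarrow[h][i]$ do not shorten words. The paper proves termination with a lexicographic measure: length first, then the number of letters other than $[i]$, then how far to the right the $[i]$'s sit. Confluence then comes for free. The irreducible words biject onto the arrows of $\Ccal$, with monicity of $i$ making $x$ in $[x][i]$ unique, so every terminating reduction lands on the same normal form. Adding $[i]s\Rightarrow[t]$, which shortens words, preserves termination.

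With these two points supplied, your final appeal to \cref{cor:h_pushout_from_rw} goes through as you wrote it.
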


\begin{remark}
  Pushouts as in \cref{prop:adding_retract} are relevant for various theory of algebraic structures. Freely adding a map $s$ fitting in a triangle like this corresponds to ``freely adding an operation'' both in the setting of dependently typed algebraic theories and in monad-theory adjunction of Bourke and Garner \cite{bourke2018monads} (generalized to the $\infty$-categorical setting in \cite{henry2021higher}). We believe this proposition will be useful to prove coherence theorem in these setting - but we leave this to future work.
\end{remark}

\begin{proof}We can freely assume that $i$ is not invertible. We will first establish that $\Ccal$ admits a presentation by a rewriting system where $i$ is a generator, but $i$ is never the last\footnote{Remember that we are using diagrammatic composition order when talking about rewrite systems.} symbol of the domain of a rewrite rule. The underlying graph $G$ of our rewrite system is the subgraph of $\Ccal$ made of:
  \begin{enumerate}
  \item All objects of $\Ccal$.
  \item The arrow $i:A \to B$.
  \item All other arrows in $\Ccal$, except the arrows $f:X \to B$ that can be factored through $i:A \to B$.
  \end{enumerate}  
  As this is a subgraph of $\Ccal$, we have a map $G^* \to \Ccal$. We let $\Ncal \subseteq G^*$ (which will later be our set of normal paths) be the set of words of the form:
  \begin{enumerate}
  \item The elements of $G$ other than $i$, i.e. all arrows of $\Ccal$ not factoring through $i$.
  \item The words of the form $x i $ where $x:T \to A$.
  \end{enumerate}
  By definition, the composite $\Ncal \to G^* \to \Ccal$ is a bijection. We then set up rewrite rules so that $\Ncal$ is exactly the set of normal forms. For each pair $f,g$ of composable arrows in $G$, where $g$ is not $i$, we have a rule $[f][g] \Rightarrow x$ where $x$ is the unique element of $\Ncal$ that is equal to $fg$ in $\Ccal$. The normal paths are then exactly the single-character paths, and $f i$, like any other chain of two elements of $G$, is the domain of a rewrite rule. As we already have a bijection between normal forms and elements of $\Ccal$, we only need to check termination of the rewrite system to get confluence for free (as the end point of a chain of rewrites will always have to be the unique element of $\Ncal$ with the same image in $\Ccal$).

  For termination we need to distinguish three types of rewrite rules:
  \begin{enumerate}
  \item Rewrite rules of the form $[f] [g] \Rightarrow [r]$.
  \item Rewrite rules of the form $[f][g] \Rightarrow [h][i]$ where $f \neq i$.
  \item Rewrite rules of the form $[i][g] \Rightarrow [h] [i]$.
  \end{enumerate}

  If we assume toward a contradiction that we have an infinite sequence of rewrite rules, there can only be a finite number of rules of the first type as they decrease the length and no rule increases the length. After the last rule of the first type, there can only be a finite number of rules of the second type as they decrease the number of symbols not equal to $i$ and no other rule diminishes that number, and finally after the last rules for the first two types there can only be a finite number of rules of the last type as they are ``moving the $i$ to the right'', with everything else (the number of $i$'s and the length) being preserved, so there can only be a finite number of these as well.

 So our rewrite system is convergent, and it presents $\Ccal$. The $1$-category obtained by considering the pushout as in the proposition can be presented by the same rewrite system where we add an arrow $[r]: B \to X$ and a rule of the form $[i] [r] \Rightarrow [t]$. This new rewrite system is still terminating as we have added a rule that decreases the length to a system that had no rule increasing the length. The key point now is that because the original rewrite system for $\Ccal$ had no rewrite rule ending with $[i]$ and this one has one additional rule of the form $[i][r]$, there are no new critical pairs nor critical branchings in the new rewrite system. This immediately implies that this new rewrite system is convergent by Newman's \cref{lem:newman}. Note that the rewrite system for $\Dcal$ is clearly reduced as all rewrite rules have a domain of length $2$.

Now investigating the critical cells of $\Dcal$, in terms of the description of strongly critical branching given in \cref{cor:comb_strongly_critical}, we see that as the rewrite rule $[i] [r] \Rightarrow [t]$ has no overlap with the other critical cells from $\Ccal$, we see that the critical cells of $\Dcal$ are exactly the critical cells of $\Ccal$, the cell $[r]$ and the $2$-cell corresponding to the rewrite rule $[i] [r] \Rightarrow [t]$, and hence the map from $\Ccal \coprod_{\Lambda^0[2]} \Delta[2]$ to $\Dcal$ is anodyne, which proves that $\Dcal$ is the homotopy pushout described in the proposition.
\end{proof}

\begin{example}\label{ex:N2} The rewrite system for the monoid $\Nb^2$ from \cref{ex:Nk} is given by two generators $a,b$ and one rewrite rule $ba \Rightarrow ab$. There is no way to overlap $ba$ with itself, so there are no critical pairs or critical $n$-branchings for $n\geqslant 2$. It follows that the only critical cells of $N(B\Nb^2)$ are:

  \begin{enumerate}
  \item The unique $0$-cell $\bullet$.
  \item The two generators $(a)$ and $(b)$.
  \item The unique critical $2$-cell is $(b,a)$. The boundary of this cell can be drawn\footnote{Remember that in our convention, the path denoted $ab$ is the composite $b \circ a$.} as:
   \[ \begin{tikzcd}
      \bullet \ar[dr,"b"swap] \ar[rr,"ab"] & & \bullet \\
      & \bullet \ar[ur,"a"swap]  &
    \end{tikzcd} \]
    so essentially, it is a homotopy $b\circ a \simeq a \circ b$.
  \end{enumerate}

  So our main result shows that $N(B\Nb^2)$ is generated by these cells up to homotopy. For a quasicategory, a functor $N(B\Nb^2) \to \Xcal$ is, up to a contractible space of choices, the same as the data of an object $\bullet \in \Xcal$, two endomorphisms $a,b:\bullet \to \bullet $ in $\Xcal$ and a homotopy $ba \simeq ab$ in $\Xcal$.
\end{example}

\begin{example}\label{ex:N3} The rewrite system for the monoid $\Nb^3$ from \cref{ex:Nk} has generators $a,b,c$, and rewrite rules
  \[\{ ba \Rightarrow ab, cb \Rightarrow bc, ca \Rightarrow ac\}\]
  The critical cells are then given by:

  \begin{enumerate}
  \item The unique $0$-cell $\bullet$.
  \item The three generating $1$-cells $a$, $b$ and $c$.
  \item The three $2$-cells corresponding to rewrite rules $(b,a)$, $(c,b)$, $(c,a)$, whose boundary can be drawn as:
   \[ \begin{tikzcd}
      \bullet \ar[dr,"b"swap] \ar[rr,"ab"] &\ar[d,phantom,"{(b,a)}"{description,font=\scriptsize}] & \bullet \\
      & \bullet \ar[ur,"a"swap]  &
    \end{tikzcd} \qquad  \begin{tikzcd}
      \bullet \ar[dr,"c"swap] \ar[rr,"bc"] & \ar[d,phantom,"{(c,b)}"{description,font=\scriptsize}] & \bullet \\
      & \bullet \ar[ur,"b"swap]  &
    \end{tikzcd} \qquad  \begin{tikzcd}
      \bullet \ar[dr,"c"swap] \ar[rr,"ac"] & \ar[d,phantom,"{(c,a)}"{description,font=\scriptsize}]  & \bullet \\
      & \bullet \ar[ur,"a"swap]  &
    \end{tikzcd}  \]
\item Finally, there is a unique critical $3$-cell $(c,b,a)$, corresponding to the unique critical pair $(cb)a =c(ba)$. Computing its boundary using the usual simplicial face operation and reduction in normal form gives us:

      \[\begin{tikzcd}
    & \bullet \ar[dr,"b"] \ar[dd,"ab"description] &  & & &\bullet \ar[dr,"b"] & \\
    \bullet \ar[dr,"abc"swap] \ar[ru,"c"] & \ar[l,phantom,"{(c,ab)}"{description,font=\scriptsize}]  \ar[r,phantom,"{(b,a)}"{description,font=\scriptsize}] & \bullet \ar[dl,"a"]  & \simeq & \bullet \ar[rr,"bc"description] \ar[dr,"abc"swap] \ar[ru,"c"] &  \ar[u,phantom,"{(c,b)}"{description,font=\scriptsize}]  \ar[d,phantom,"{(bc,a)}"{description,font=\scriptsize}] & \bullet \ar[dl,"a"] \\
    & \bullet & & & & \bullet & 
  \end{tikzcd}\]
\end{enumerate}
Now, importantly, the boundary of this critical $3$-cell  involves some non-critical $2$-cell $(c,ab)$ and $(bc,a)$, both are type $\II$ cell, so we need to follow the process given by Brown's \scheme to understand how to further decompose these into critical cells. For example, $(c,ab)$ is associated to the type $\I$-cells $(c,a,b)$ whose boundary is

\[\begin{tikzcd}[ampersand replacement =]
    & \bullet \ar[dr,"a"] \ar[dd,"ab"description] &  & & &\bullet \ar[dr,"a"] & \\
    \bullet \ar[dr,"abc"swap] \ar[ru,"c"] & \ar[l,phantom,"{(c,ab)}"{description,font=\scriptsize}]  \ar[r,phantom,"{(a,b)}"{description,font=\scriptsize}] & \bullet \ar[dl,"b"]  & \simeq & \bullet \ar[rr,"ac"description] \ar[dr,"abc"swap] \ar[ru,"c"] &  \ar[u,phantom,"{(c,a)}"{description,font=\scriptsize}]  \ar[d,phantom,"{(ac,b)}"{description,font=\scriptsize}] & \bullet \ar[dl,"b"] \\
    & \bullet & & & & \bullet & 
  \end{tikzcd}\]
which expresses that $(c,ab)$ is homotopic to a composite of $(c,a)$ and $(ac,b)$ (the last $2$-cell $(a,b)$ is a type $\I$ cell expressing that $ab$ is just the composite of $a$ and $b$). $(c,a)$ is one of our critical cells, while $(ac,b)$ is another type $\II$ cell that is associated to the type $\I$ cell $(a,c,b)$ whose boundary includes the cells $(a,c)$,$(a,bc)$ that are type $\I$ (hence just express composition relation), the cell $(ac,b)$ we are trying to understand and finally the cell $(c,b)$ which is critical. So putting everything together, we can say informally that $(c,ab)$ is a homotopy $cab \simeq abc$ obtained by composing
\[ (ca)b \Rightarrow (ac)b = a (cb) \Rightarrow a (bc)\]
A similar discussion shows that $(bc,a)$ is a homotopy $bca \simeq abc$ obtained as a composite
\[ b(ca) \Rightarrow b (ac) = (ba)c \Rightarrow (ab)c \]

And so overall, the critical $3$-cell is a homotopy that expresses the usual ``hexagonal'' condition familiar for example from the definition of a braided or symmetric monoidal category:

\[ \begin{tikzpicture}
\node (0) at (90:1.5) {$cba$};
\node (1) at (150:1.5) {$cab$};

\node (5) at (30:1.5) {$bca$};

\draw[draw = none] (0) -- (5) node[midway,sloped] {\scalebox{1.5}{ $\Rightarrow$}};

\draw[draw = none] (0) -- (1) node[midway,sloped] {\scalebox{1.5}{ $\Leftarrow$}};

\node (2) at (210:1.5) {$acb$};
\node (3) at (-90:1.5) {$abc$};

\node (4) at (-30:1.5) {$bac$};

\draw[draw = none] (1) -- (2) node[midway,sloped] {\scalebox{1.5}{ $\Rightarrow$}};
\draw[draw = none] (2) -- (3) node[midway,sloped] {\scalebox{1.5}{ $\Rightarrow$}};

\draw[draw = none] (5) -- (4) node[midway,sloped] {\scalebox{1.5}{ $\Rightarrow$}};
\draw[draw = none] (4) -- (3) node[midway,sloped] {\scalebox{1.5}{ $\Leftarrow$}};

\end{tikzpicture}
\]

So to give a functor $B\Nb^3 \to \Xcal$, for $\Xcal$ an $\infty$-category, it is enough - up to a contractible space of choices - to specify the image of the object $\bullet$, three endomorphisms $a,b,c$ of this object, three homotopies expressing the three commutation relations and finally one ``homotopy between homotopies'' expression that our three homotopies satisfy the hexagonal compatibility conditions from above. 
\end{example}

\begin{example}
  More generally, the rewrite system for $\Nb^k$ given in \cref{ex:Nk} has generators $a_1,\dots,a_k$ and its critical $n$-cells are exactly the
  \[ (a_{i_1},a_{i_2},\dots,a_{i_n})\]
  where $i_1 >i_2 > \dots >i_n$ are all in $\{1,\dots,k\}$.
  Unpacking their shape as we did before should in theory lead us to the formalism of permutahedra, but this seems difficult to do in a systematic way in the simplicial formalism.
\end{example}

\begin{example}
  The category $\Delta$ has a well-known presentation by generators and relations: it has objects $[n]$ for all $n \geqslant 0$, generating arrows:
  \[\delta^n_i:[n-1] \to [n] \qquad \sigma^n_i: [n+1] \to [n]\]
  with the so-called simplicial relations, which we write in the normal composition order, contrary to our general convention in the rest of the paper, to match usual practice:
\[\begin{array}{cccc}
    \forall i \leqslant j, & \delta^{n+1}_i \delta^n_j& \Rightarrow &\delta^{n+1}_{j+1} \delta^n_i \\
    \forall i \leqslant j, & \sigma^n_j \sigma^{n+1}_i & \Rightarrow& \sigma^n_i \sigma^{n+1}_{j+1} \\
    \forall i <j, & \sigma^n_j \delta^{n+1}_i &\Rightarrow& \delta^n_i \sigma^{n-1}_{j-1}\\
    \forall i=j \text{ or } i=j+1, & \sigma^n_j \delta^{n+1}_i&  \Rightarrow & 1_{[n]} \\
    \forall i>j+1, & \sigma^n_j \delta^{n+1}_i &\Rightarrow & \delta^n_{i-1} \sigma^{n-1}_{j}
\end{array}\]

The choice of orientation has been to produce the classical ``normal form'' for maps in $\Delta$ as:

\[ \delta_{i_1} \dots \delta_{i_k} \sigma_{j_1} \dots \sigma_{j_l}\]
where $i_1 >i_2 > \dots >i_k$ and $j_1 < j_2 <\dots < j_l$. It is easy to see that iteratively applying the rewrite rules always ends up at such a normal form, hence the rewrite system is terminating and confluence can be checked using Newman's lemma.

So we obtain a presentation of $\Delta$ by generators as an $\infty$-category.

Jardine's supercoherence theorem from \cite{jardine1991supercoherence} gives a description of a pseudo-functor $X:\Delta^\op \to \text{Cat}$ in terms of a sequence of objects $X([i])$, maps $X(\delta^n_i): X([n]) \to X([n-1])$ and $X(\sigma_i^n):[n] \to [n+1]$, natural transformations corresponding to the simplicial identities, and a sequence of 17 families of coherence equations relating these natural transformations.

It is possible to check that these 17 equations\footnote{The number 17 is somewhat arbitrary as it corresponds to 17 infinite families of equations. Fundamentally there are four types of critical pairs, depending on how many $\sigma$'s and $\delta$'s they involve; the rest of the subdivision depends on how we separate them into subfamilies according to the value of the indices involved, which involves some choices in how it is done.} correspond exactly to the critical pairs of the rewrite system above. In particular, our generalization of Brown's result - up to some calculation of the boundary maps that we are omitting - allows us to recover Jardine's theorem when applied to this rewrite system. For example, this immediately generalizes Jardine's result by replacing $\text{Cat}$ with an arbitrary weak $2$-category, by working in the underlying $(2,1)$-category and treating it as an $(\infty,1)$-category. In theory, this also provides a proof of an $n$-categorical version of the supercoherence theorem, with more coherence conditions in higher dimensions, but explicitly unpacking all the diagrams involved by hand seems like a daunting task - especially using the simplicial formalism.
\end{example}

The previous few examples show that it can be difficult to compute explicitly the real ``shape'' of the boundary of the critical cell in terms of the lower-dimensional critical cells. However, there are a lot of situations where we can apply our theorem very effectively by exploiting the fact that there are just not many, or not even any, critical $n$-cells. For example, the following is used in another work to appear: 

\begin{example}
  Consider the free $E_1$-monoid $X$ freely generated by elements $z,e_1,\dots,e_n$ and homotopies $z e_1 \simeq e_1 z, \dots, z e_n \simeq e_n z$. Then $X$ is $0$-truncated, i.e. corresponds to the ordinary monoid $\Nb \times F_n$ where $F_n$ is the free monoid on $n$ generators.
  
Indeed, consider the rewrite system with generators $z,e_1,\dots,e_n$ and rewrite rules $e_1 z \Rightarrow z e_1$, $\dots$, $e_n z \Rightarrow z e_n$. It is terminating as each application of a rule ``moves'' one instance of $z$ to the left, so the number of pairs of an $e_i$ appearing before a $z$ strictly decreases with each rule application, and there are no critical pairs as the domains of the rules cannot overlap, so confluence follows from Newman's \cref{lem:newman}. It also follows that the corresponding monoid $\Nb \times F_n$ is the free $\infty$-category with a single object (i.e. free $E_1$-monoid) generated by elements and homotopies as specified above, which proves the previous claim.  
\end{example}

\section{Applications to Dwyer maps}
\label{sec:Dwyer_maps}

In this section we show how rewriting methods can be used to give a new proof of the main result from \cite{hackney2024pushouts} about pushouts of Dwyer maps between categories being $(\infty,1)$-categorical pushouts. 
The idea is to show that Dwyer maps can be described by rewrite systems of a special form, for which pushouts are well behaved; namely, we will consider rewrite systems having the following properties:

\begin{assumption}\label{assum:flat_rewrite}
  Let $F:A \to B$ be a functor such that:
  \begin{enumerate}
  \item $A$ is presented by a convergent rewrite system $(G_A,R_A)$.
  \item $B$ is presented by a rewrite system $(G_B,R_B)$, such that $G_A \subseteq G_B$ and $R_A \subseteq R_B$, with $F$ being induced by this inclusion.
  \item Every rule in $R_B$ is either in $R_A$ or does not have any arrow in $G_A$ in its domain.
  \item The rewrite system obtained from the above by adding rules $f \Rightarrow 1$ for all $f \in G_A$, and identifying all the vertices in $G_A$, is terminating.
   \item $(G_B,R_B)$ is locally confluent (i.e. satisfies the assumption of Newman's \cref{lem:newman}).
  \end{enumerate}
\end{assumption}

\begin{lemma} \label{lem:flat_rewrite_terminante}
  Under \cref{assum:flat_rewrite}, the rewrite system $(G_B,R_B)$ for $B$ is also convergent.
\end{lemma}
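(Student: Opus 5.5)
By Newman's \cref{lem:newman} and item (5) of \cref{assum:flat_rewrite}, confluence is automatic once termination is known. So the whole task is to prove that $(G_B,R_B)$ is terminating.

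Suppose, toward a contradiction, that there is an infinite reduction sequence $p_0 \Rightarrow p_1 \Rightarrow \dots$ in $(G_B,R_B)$. Let $(G',R')$ be the auxiliary system of item (4): add the rules $f \Rightarrow 1$ for $f\in G_A$ and identify the vertices of $G_A$. Let $\pi$ be the operation that deletes every letter of $G_A$ from a path. Since the vertices of $G_A$ are identified, $\pi(p)$ is still a path in the graph of $(G',R')$, and $p \Rightarrow^* \pi(p)$ there.

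Each step $p_k \Rightarrow p_{k+1}$ is one of two kinds.
\begin{itemize}
\item \textbf{An $R_A$ step.} By item (3), the rule lies in $R_A$. Its domain and codomain are paths in $G_A$, so $\pi(p_k)=\pi(p_{k+1})$.
\item \textbf{An $R_B \setminus R_A$ step.} By item (3), the domain $\gamma$ contains no letter of $G_A$, so $\pi(u\gamma v)=\pi(u)\gamma\pi(v)$. This reduces in one step of $R'$ to $\pi(u)\gamma'\pi(v)$, and then, deleting the $G_A$ letters of $\gamma'$ via the rules $f\Rightarrow 1$, to $\pi(u\gamma' v)$. Hence $\pi(p_k)\Rightarrow^+\pi(p_{k+1})$ is a nonempty reduction in $(G',R')$.
\end{itemize}
Since $(G',R')$ is terminating, only finitely many steps can be of the second kind. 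So there is some $N$ beyond which all steps are $R_A$ steps.

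It remains to rule out an infinite sequence of $R_A$ steps starting at $p_N$. Write $p_N=a_0b_1a_1\dots b_ma_m$ with the $a_j$ maximal subpaths in $G_A$ and the $b_j$ letters outside $G_A$. An $R_A$ rule has its domain entirely in $G_A$, so it acts inside a single $a_j$. Its codomain also lies in $G_A$, so this block decomposition is preserved, with the same letters $b_j$. An infinite sequence of such steps would therefore give infinitely many reductions in some fixed block $a_j$, an infinite $R_A$ reduction. This contradicts the termination of $(G_A,R_A)$ from item (1).

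The main obstacle is the projection step: checking that an $R_B\setminus R_A$ step really yields a strict reduction after $\pi$. This is exactly where the identification of the vertices of $G_A$ is needed, so that $\pi(p)$ is a well-formed path. It is also where item (3) is needed, so that $\gamma$ survives $\pi$ intact and the rule can fire in $(G',R')$.
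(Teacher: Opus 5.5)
Your proof is correct and follows the paper's route. You use item (4) to show that only finitely many $R_B\setminus R_A$ steps occur, then split the remaining $R_A$-only tail into $G_A$-blocks separated by untouched letters of $G_B\setminus G_A$ to contradict termination of $(G_A,R_A)$, with confluence from Newman's lemma and item (5). Your explicit projection $\pi$, including the rules $f\Rightarrow 1$ to erase $G_A$-letters in codomains of $R_B\setminus R_A$ rules, makes precise the step the paper calls ``immediately translated''.
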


\begin{proof}
  Assume toward a contradiction that there is an infinite sequence of rewrite steps in $(G_B,R_B)$. Assume also that this sequence has an infinite number of steps that are in $R_B-R_A$. Then as the domain of these rules only involves the symbols in $G_B-G_A$ and the other rules cannot affect these symbols, this can be immediately translated into an infinite sequence of rewrite steps for the modified rewrite system of \cref{assum:flat_rewrite}(4), which is contradictory. So there are only finitely many steps involving rules in $R_B - R_A$, and it follows that we have an infinite sequence of rewrite steps taken from $R_A$. Given such a sequence using only rules from $R_A$, the symbols not in $G_A$ are not affected, so we can see the initial path as a finite set of subpaths separated by symbols in $G_B - G_A$, and each rule applies to one of the subpaths in this set. So there will have to be an infinite number of steps applying to the same subpath, which contradicts the fact that $(G_A,R_A)$ is terminating. As \cref{assum:flat_rewrite} includes the fact that $(G_B,R_B)$ is locally confluent, this is enough to show it is convergent.
\end{proof}

\begin{theorem}\label{thm:flat_rewrite}
  Given a pushout square of $1$-categories
  \[ \begin{tikzcd}
      A \ar[d,"F"] \ar[r] & C \ar[d] \\
      B \ar[r] & D 
    \end{tikzcd} \]
  where $F:A \to B$ satisfies \cref{assum:flat_rewrite}, then it is also a pushout square of $\infty$-categories. 
\end{theorem}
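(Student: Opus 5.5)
The plan is to present everything in the square by convergent rewrite systems that extend one another. Then Brown's \scheme restricts along the inclusions by \cref{lem:extension_of_rw}, and \cref{cor:h_pushout_from_rw} finishes the argument. We proceed in four steps.

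\textbf{Step 1: present $C$ compatibly with $A$.} Replace $C$ by its standard rewrite system (\cref{ex:std_rewrite}). The functor $A \to C$ need not be injective, so we do not try to embed $(G_A,R_A)$ into a system for $C$. Instead we build a system for $D$ directly. Take the graph $G_D = G_C \coprod_{\text{ob}} (G_B - G_A)$: the edges of $G_B$ not in $G_A$ are glued to the objects of $C$ through the map on objects induced by $A \to C$ (objects of $B$ not in $A$ are kept). The rules are those of $C$ together with, for each rule $\gamma \Rightarrow \gamma'$ in $R_B - R_A$, the rule $\gamma \Rightarrow \overline{\gamma'}$. Here $\overline{\gamma'}$ is obtained by replacing each maximal subpath of $\gamma'$ lying in $G_A$ by the corresponding normal path in $C$, i.e.\ its image letter $[f]$, or the empty path if $f$ is an identity. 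By \cref{assum:flat_rewrite}(3) the domain $\gamma$ contains no letter of $G_A$, so these domains consist only of symbols from $G_B - G_A$. This system presents the pushout $D$, since the pushout of categories is presented by the union of the presentations.

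\textbf{Step 2: check that the system for $D$ is convergent.} For termination, argue as in \cref{lem:flat_rewrite_terminante}. An infinite reduction has either infinitely many steps from $R_B - R_A$, or else eventually only steps of $C$. In the first case, erasing all $C$-letters and identifying their vertices gives an infinite reduction in the modified system of \cref{assum:flat_rewrite}(4). That modified system sends both $\gamma'$ and $\overline{\gamma'}$ to the same path, because all $G_A$ letters become trivial. In the second case, the $C$-steps act on the finitely many maximal $C$-segments separated by $G_B - G_A$ letters, and the standard system is terminating on each. For confluence use \cref{lem:newman}. Rules of $C$ never overlap rules from $R_B - R_A$, because the latter have no $C$-letters in their domains. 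Critical pairs among $C$ rules are handled as in \cref{ex:std_rewrite}. Critical pairs between two rules of $R_B - R_A$ are joinable in $(G_B,R_B)$ by \cref{assum:flat_rewrite}(5). Such a joining uses rules of $R_A$ and of $R_B - R_A$; we transport it by the map $\gamma \mapsto \overline{\gamma}$, which turns $R_A$-steps into equalities or standard $C$-reductions. Checking that this transport really yields joinability is the delicate point. I expect it to be the main obstacle, and I would first verify that $\overline{\,\cdot\,}$ commutes with one-step rewriting up to $\Rightarrow^*$ in $D$.

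\textbf{Step 3: identify the critical cells.} Brown's \scheme on $N(D)$ restricts to $N(C)$ by \cref{lem:extension_of_rw}. The hypothesis of that lemma holds because normal $C$-words stay normal: no new rule has a domain containing $C$-letters. Likewise $(G_B,R_B)$ extends $(G_A,R_A)$, with $R_A$-normal words staying normal by (3), so the \scheme on $N(B)$ restricts to $N(A)$. These restrictions give \schemes on $N(A) \cto N(B)$ and $N(C) \cto N(D)$. It remains to show that $N(B) \to N(D)$ induces a bijection between critical cells outside $N(A)$ and critical cells outside $N(C)$. A critical cell outside $N(A)$ contains some letter of $G_B - G_A$. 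By (C1) and (C2), each consecutive pair $\omega_{i-1}\omega_i$ is a minimal reducible word, and its reducing rule must lie either entirely in $G_A$-letters or be a rule of $R_B - R_A$. A Brown-critical chain therefore starts with a generator and proceeds through overlapping rules. Since no rule overlaps both kinds of letters, a critical cell is either entirely in $N(A)$ or entirely made of letters from $G_B - G_A$ and rules of $R_B - R_A$. The same dichotomy holds in $D$ with $C$ in place of $A$. Cells made of $G_B - G_A$ letters correspond bijectively on both sides, because the conditions (C0)--(C2) only involve those letters and rules, and normal forms of such words agree on both sides. The one thing to watch is that the vertices are glued, but the cells are sequences of edges, so this does not affect the correspondence.

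\textbf{Step 4: conclude.} Apply \cref{cor:h_pushout_from_rw} to the square of nerves. This gives that the square is a homotopy pushout of $\infty$-categories.
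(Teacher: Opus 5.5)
Your proposal is correct and follows the paper's proof essentially step for step. Both use the same glued presentation of $D$ (standard system for $C$ plus the generators of $G_B - G_A$ and the rules of $R_B - R_A$), convergence via the argument of \cref{lem:flat_rewrite_terminante} and Newman's lemma, restriction of Brown's schemes by \cref{lem:extension_of_rw}, the non-overlap of $C$-rules with $(R_B-R_A)$-rules to match critical cells, and finally \cref{cor:h_pushout_from_rw}. The confluence transport you flag as the main obstacle does go through, and the paper simply calls it immediate: an $R_A$-step leaves $\overline{p}$ unchanged, and an $(R_B-R_A)$-step becomes one $D$-step followed by the standard rules $[f][g]\Rightarrow[fg]$ and $[\id_x]\Rightarrow 1_x$. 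For your gluing caveat, the precise reason a $D$-critical word of new letters is already a path in $G_B$ is that any two adjacent letters of a critical cell lie in a common rule domain.
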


\begin{proof}
  Pick a convergent rewrite system $(G_C,R_C)$ for $C$, for example the standard rewrite system as in \cref{ex:std_rewrite}. We can then construct a rewrite system $(G_D,R_D)$ for $D$ as follows:

  \begin{enumerate}
  \item Start from the rewrite system for $C$ (i.e. $G_D$ and $R_D$ contain $G_C$ and $R_C$).
  \item For each vertex in $G_B$ not in $G_A$, add a vertex to $G_D$.
  \item For each edge in $G_B$ not in $G_A$ add an edge to $G_D$ between the corresponding vertices (the one from the previous step for endpoints in $G_B-G_A$, and their image under the functor $A \to C$ for endpoints in $G_A$).
  \item Add all rewrite rules in $R_B - R_A$.
  \end{enumerate}

  This produces a rewrite system that presents $D$ (it is the pushout of presentations). We can immediately check that $(G_D,R_D)$ still satisfies all the assumptions of \cref{assum:flat_rewrite}, so in particular by \cref{lem:flat_rewrite_terminante} it is convergent.

  The key observation is that the critical cells of $N(D)$ for this rewrite system are all either critical cells of $B$ or of $C$. Indeed this is because the rules coming from $B$ and from $C$ can never overlap, as their domains involve distinct sets of symbols, and so it is impossible that a critical cell (seen as a critical branching as explained in \cref{sec:weak_vs_strong_criticallity}) involves rules from both sets at the same time.

  The result then immediately follows from \cref{cor:h_pushout_from_rw}: Brown's \schemes on $N(B)$ and $N(D)$, coming from their presentation by rewrite systems, restrict to \schemes on $N(A) \to N(B)$ and $N(C) \to N(D)$ by \cref{lem:extension_of_rw}; the previous ``key observation'' then shows that $N(B) \to N(D)$ induces a bijection between the critical cells of $N(A) \to N(B)$ and those of $N(C) \to N(D)$, as these only use the symbols from $G_B$ not in $G_A$.
  
\end{proof}

\begin{remark}\label{rk:flat_rewrite_imp_pushout_sets} We can immediately recover \cref{prop:pushout_of_1cat} from \cref{thm:flat_rewrite}: if $S$ is a set and $S \to \Ccal$ is an injection of $S$ into the set of objects of $\Ccal$, then the standard rewrite system for $\Ccal$ satisfies all of \cref{assum:flat_rewrite}.
\end{remark}

\begin{definition}
  A \emph{Dwyer map} is a full subcategory inclusion $F: \Ccal \subseteq \Dcal$ that factors as $\Ccal \subseteq \Wcal \subseteq \Dcal$ such that:
  \begin{enumerate}
  \item $\Ccal \to \Dcal$ is a sieve inclusion.
  \item $\Wcal \to \Dcal$ is a cosieve inclusion.
  \item $\Ccal \subseteq \Wcal$ is coreflective.
  \end{enumerate}
\end{definition}

\begin{lemma}\label{lem:Dwyer_rewrite}
  Given a Dwyer map $F: \Ccal \to \Dcal$, we can construct rewrite systems for $\Ccal$ and $\Dcal$ satisfying \cref{assum:flat_rewrite}.
\end{lemma}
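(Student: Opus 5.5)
The plan is to write down the rewrite systems explicitly, using the coreflection to handle arrows that leave $\Ccal$. Fix a factorization $\Ccal \subseteq \Wcal \subseteq \Dcal$ as in the definition of a Dwyer map. Let $r:\Wcal \to \Ccal$ be the right adjoint to the inclusion, with counit $\varepsilon_w : r(w) \to w$.

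We first record three consequences of the definition.
\begin{enumerate}
\item Since $\Ccal$ is a sieve, every arrow of $\Dcal$ whose target is in $\Ccal$ lies in $\Ccal$.
\item Since $\Wcal$ is a cosieve containing $\Ccal$, every arrow out of an object of $\Ccal$ lands in $\Wcal$.
\item By coreflectivity, every arrow $c \to w$ with $c \in \Ccal$ and $w \in \Wcal$ factors uniquely as $f$ followed by $\varepsilon_w$ with $f : c \to r(w)$ in $\Ccal$.
\end{enumerate}
Hence the arrows of $\Dcal$ split into three disjoint kinds: arrows of $\Ccal$; arrows $c \to w$ with $c \in \Ccal$ and $w \in \Wcal - \Ccal$, each uniquely of the form $f\varepsilon_w$ in diagrammatic order; and arrows whose source and target are both outside $\Ccal$.

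For $(G_A,R_A)$ we take the standard rewrite system of $\Ccal$ from \cref{ex:std_rewrite}. For $(G_B,R_B)$ we add to it the following data.
\begin{itemize}
\item Vertices: one for each object of $\Dcal - \Ccal$.
\item Generators $[\varepsilon_w]$: one for each $w \in \Wcal - \Ccal$.
\item Generators $[h]$: one for each arrow $h$ of $\Dcal$ between objects not in $\Ccal$.
\item Rules $[h][k] \Rightarrow [hk]$ and $[\id_x] \Rightarrow 1_x$ for such arrows.
\item Rules $[\varepsilon_w][h] \Rightarrow [g][\varepsilon_{w'}]$, where $h : w \to w'$ and $\varepsilon_w h = g\,\varepsilon_{w'}$ is the unique factorization. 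Here $w'$ lies in $\Wcal - \Ccal$ by the cosieve and sieve properties. We read $[g]$ as the empty path when $g$ is an identity.
\end{itemize}
None of the new rules has a symbol of $G_A$ in its domain, so \cref{assum:flat_rewrite}(3) holds by construction, and (1) and (2) are clear.

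Next we check that this system presents $\Dcal$. Every rule preserves the value in $\Dcal$. By the trichotomy above, the normal paths are in bijection with the arrows of $\Dcal$. They are: the normal paths of $\Ccal$; the paths $[f][\varepsilon_w]$, or $[\varepsilon_w]$ alone when $f$ is an identity; and the single letters $[h]$. In particular no path of the form $[h][f]$ with $f$ in $\Ccal$ exists, since $h$ cannot have target in $\Ccal$.

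For \cref{assum:flat_rewrite}(4), we kill all $\Ccal$-symbols and identify the vertices of $\Ccal$. Every new rule then strictly decreases length:
\begin{itemize}
\item $[h][k] \Rightarrow [hk]$ goes from length two to length one;
\item $[\varepsilon_w][h] \Rightarrow [g][\varepsilon_{w'}]$ becomes $[\varepsilon_w][h] \Rightarrow [\varepsilon_{w'}]$ once $[g]$ is deleted;
\item the identity rules and the added rules $f \Rightarrow 1$ also shorten.
\end{itemize}
So the modified system terminates. The termination half of the argument in \cref{lem:flat_rewrite_terminante} uses only (1) and (4), not local confluence, so $(G_B,R_B)$ is terminating.

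For local confluence (5), a terminating system in which every path has a value and distinct normal forms have distinct values is confluent. Two normal forms reached from the same path have the same value, and are therefore equal by the bijection above. Confluence in particular gives local confluence. One could instead check the critical pairs $[h][k][l]$, $[\varepsilon_w][h][k]$ and those involving identities directly, using uniqueness of the factorization through $\varepsilon$.

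The main obstacle is the normal-form bijection. It requires getting the sieve and cosieve directions right, so that an arrow out of $\Ccal$ always factors through some $\varepsilon_w$, and no $\Dcal - \Ccal$ arrow is ever followed by a $\Ccal$ symbol. It also requires handling the degenerate case where the $\Ccal$-part $g$ of a factorization is an identity.
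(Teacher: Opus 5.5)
Your proof is correct, and your rewrite system is the paper's. Your $[\varepsilon_w]$ are its generators $p_a$, and your generators $[h]$ with their rules form the standard system on the full subcategory of objects outside $\Ccal$. Your rule $[\varepsilon_w][h]\Rightarrow[g][\varepsilon_{w'}]$ is its $p_a f \Rightarrow c(f)p_{a'}$; you correctly restrict to $w\in\Wcal-\Ccal$, where the paper writes ``$a\in\Wcal$'' loosely. Conditions (1)--(4) are checked the same way.

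The difference is in (5) and in identifying the presented category.
\begin{itemize}
\item The paper checks local confluence on critical pairs. The only new one is $p_a f g$, which closes because $c(f)c(g)=c(fg)$. It then gets convergence from \cref{lem:flat_rewrite_terminante}, lists the normal forms, and leaves compatibility with composition as a ``straightforward verification''.
\item You compute the normal forms first and show they biject with the arrows of $\Dcal$ via the value map; this is where the sieve/cosieve/coreflection trichotomy is used. You get termination from (4), and then confluence and the presentation claim both follow from injectivity of normal forms. This is the same trick the paper uses in \cref{prop:adding_retract}.
\end{itemize}
Your route avoids the critical-pair analysis and makes the ``presents $\Dcal$'' step rigorous rather than asserted. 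In exchange, it needs the arrows of $\Dcal$ in closed form up front, whereas the paper's critical-pair check is purely local.

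Some small imprecisions, none of them gaps:
\begin{itemize}
\item The normal forms between objects outside $\Ccal$ are the non-identity $[h]$ together with the trivial paths $1_x$.
\item The termination argument of \cref{lem:flat_rewrite_terminante} also uses (3), not only (1) and (4).
\item In the modified system of (4), the rule $[\varepsilon_w][h]\Rightarrow[g][\varepsilon_{w'}]$ does not itself shorten paths. The clean measure is lexicographic: first the number of letters not in $G_A$, then the number of letters in $G_A$. The paper's phrase ``can be replaced by'' has the same looseness.
\end{itemize}
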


\begin{proof}
  We can freely assume that $\Ccal$ is replete in $\Dcal$ (i.e. all objects isomorphic to an object of $\Ccal$ are in $\Ccal$). We write $\Acal$ for the full subcategory of $\Dcal$ of objects that are not in $\Ccal$ (and hence not isomorphic to objects of $\Ccal$ either).

We take the standard rewrite system on $\Ccal$. We extend it to a rewrite system for $\Dcal$ as follows:

  \begin{enumerate}
  \item We add all the generators and rules of the standard rewrite system for $\Acal$.
  \item For each $a \in \Wcal$ we add an arrow $p_a: c(a) \to a$, where $c(a)$ is the coreflection of $a$ on $\Ccal$.
  \item For each arrow $f:a \to a'$ for $a$ and $a'$ in $\Wcal$ we have a rule\footnote{Recall (\cref{notation:reverse_order}) that we use diagrammatic composition order when discussing paths in graphs.}
    \[ p_{a} f \Rightarrow c(f) p_{a'} \]
    corresponding to the commutative square:
    \[\begin{tikzcd}
      a \ar[r,"f"] & a' \\
      c(a) \ar[u,"p_a"] \ar[r,"c(f)"] & c(a') \ar[u,"p_{a'}"]
    \end{tikzcd}\]
\end{enumerate}

We check it satisfies \cref{assum:flat_rewrite}. Conditions (1) to (3) are clear (we will check that it presents $\Dcal$ later). For condition (4), if we add all the rules $c \Rightarrow 1$ for all $c \in \Ccal$, then the rule $p_{a} f \Rightarrow c(f) p_{a'}$ can be replaced by $p_{a} f \Rightarrow p_{a'}$, and all remaining rules decrease the length, so termination is immediate. For condition $(5)$, i.e. local confluence, the only critical pairs are the ones from the standard rewrite system on $\Ccal$ and $\Acal$, and the pair arising from the word $p_a f g$ for composable arrows $f:a \to a'$ and $g:a' \to a''$ of $\Wcal$, where local confluence corresponds to the following diagram:
  \[\begin{tikzcd}
      a \ar[r,"f"] & a' \ar[r,"g"]  & a'' \\
      c(a) \ar[u,"p_a"] \ar[r,"c(f)"] & c(a') \ar[u,"p_{a'}"] \ar[r,"c(g)"] &  c(a'') \ar[u,"p_{a''}"]
    \end{tikzcd}\]
  and the fact that $c(f)c(g) = c(fg)$. So we have checked all of \cref{assum:flat_rewrite}, to conclude we only need to check that this does present $\Dcal$.

  By \cref{lem:flat_rewrite_terminante}, the rewrite system above is convergent. The normal paths are exactly the arrows in $\Ccal$, the arrows in $\Acal$, and the arrows of the form $v p_a$ for $v$ an arrow in $\Ccal$. So between two objects of $\Ccal$ or between two objects of $\Acal$ we do recover exactly the same arrows as in $\Dcal$. The only other arrows in $\Dcal$ are the arrows from an object of $\Ccal$ to an object of $\Wcal$ (indeed as $\Ccal$ is a sieve there are no other arrows into $\Ccal$, and as $\Ccal \subseteq \Wcal$ and $\Wcal$ is a cosieve, there are no arrows $c \to x$ for $c \in \Ccal$ and $x \notin \Wcal$). But any such arrow $c \to a$ is written uniquely in the form $v p_a$ by coreflectivity of $\Ccal$ in $\Wcal$, so we also have a correspondence between these arrows and the normal forms. We only need to check that all these arrows compose as expected, but this is a straightforward verification.
\end{proof}

Combining \cref{thm:flat_rewrite} and \cref{lem:Dwyer_rewrite}, we recover the main result from \cite{hackney2024pushouts}

\begin{cor}
  Any pushout of $1$-categories of the form
  \[\begin{tikzcd}
      A \ar[d,"F"swap] \ar[r] & C \ar[d] \\
      B \ar[r] & D 
    \end{tikzcd} \]
where $F$ is a Dwyer map is also a pushout of $\infty$-categories.  
\end{cor}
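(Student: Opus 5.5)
The plan is to combine \cref{lem:Dwyer_rewrite} with \cref{thm:flat_rewrite} directly. Given a Dwyer map $F:A \to B$, \cref{lem:Dwyer_rewrite} provides rewrite systems $(G_A,R_A)$ and $(G_B,R_B)$ presenting $A$ and $B$ that satisfy \cref{assum:flat_rewrite}. There is one point to check: the functor induced by the inclusion of rewrite systems must be $F$ itself, up to the harmless replacement of $A$ by its repletion. The rewrite system for $A$ is the standard one, and it sits inside the one for $B$ via the inclusion of generators $[f] \mapsto [F f]$. So the induced functor is exactly the full inclusion $A \subseteq B$.

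The repletion step needs a short argument. Replacing $A$ by its replete closure $\bar A$ in $B$ changes the square. I would handle this by noting that $A \to \bar A$ is an equivalence of categories. Hence the pushout of $A \to C$ along $A \to \bar A$ is an equivalence $C \to C'$ of $1$-categories; it is also a homotopy pushout, since equivalences of categories are injective on objects up to adding isomorphic copies, and this is a pushout along a cofibration which is a categorical equivalence. The original square then factors as the square for $\bar A \subseteq B$ pasted with these equivalences. Alternatively, I would simply check whether the proof of \cref{lem:Dwyer_rewrite} actually used repleteness. It seems to be used only to say that no object of $\Acal$ is isomorphic to one of $\Ccal$, and the sieve condition already gives this, since an isomorphism into $\Ccal$ forces its source into $\Ccal$. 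So I would remove the assumption rather than paste squares.

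With this in place, \cref{thm:flat_rewrite} applies to the pushout square of $1$-categories and shows that it is a pushout of $\infty$-categories.

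The main obstacle is not the formal combination but the verification left as ``straightforward'' in \cref{lem:Dwyer_rewrite}, namely that the rewrite system really presents $B$, including composition of arrows $c \to a$ with arrows out of $a$ in $\Wcal$. I would check this by normal forms: $v p_a f$ rewrites to $v\, c(f)\, p_{a'}$. This matches composition in $B$ because the coreflection counit $p$ is natural, and because any arrow from $\Ccal$ to $\Wcal$ factors uniquely through $p$.
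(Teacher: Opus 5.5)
Your proposal is correct and follows the paper's proof, which is exactly the combination of \cref{lem:Dwyer_rewrite} with \cref{thm:flat_rewrite}. Your side remark is also sound: a sieve is automatically replete, since an isomorphism $x \to c$ with $c$ in the sieve forces $x$ into it, so the repleteness assumption in \cref{lem:Dwyer_rewrite} costs nothing and no pasting of squares is needed.
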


\section{Applications to coherence for diagrams}
\label{sec:example_diag}

The goal of this section is to apply rewriting methods to establish a coherence theorem for planar loop-free diagrams. The type of diagram we consider is explained in \cref{assum:good_graph} just below, and we will establish that for this type of diagram, the $\infty$-category they present is a $1$-category. In particular, this allows us to quickly justify many operations of pasting and gluing of such diagrams that we often do in higher category theory, as explained in \cref{rk:gluing_diagrams}.

There seems to be some intersection between this coherence result and the pasting theorem for $(\infty,2)$-categories established in \cite{hackney2023pasting}, at least in the sense that both allow one to do something like the pasting and gluing construction we discuss in \cref{rk:gluing_diagrams}. Whether there is a more precise connection between the two results remains unclear to the author. Our result applies to more diagrams, but the pasting theorem of  \cite{hackney2023pasting} says something about the $(\infty,2)$-category presented by the diagram.

\begin{assumption}\label{assum:good_graph}
  In this section, we consider graphs $G$ such that:

  \begin{enumerate}
  \item $G$ is a finite directed graph.
  \item $G$ has no oriented loops.
  \item $G$ is a planar graph, in the sense that it is explicitly (and smoothly outside of vertices) embedded in the plane. In particular, by the Jordan curve theorem, the complement of the graph in the plane decomposes into a finite set of \emph{bounded} components, which we call inner faces, and one unbounded component (the outer face).
  \item \label{assum:good_graph:sink_source} The cycle boundary of each inner (i.e. bounded) face of $G$ has a unique ``source'' and a unique ``sink''. That is, the boundary of the face decomposes into two oriented paths, both starting at the same point (the source) and ending at the same point (the sink). For such a face $R$, one of these oriented paths goes in the clockwise direction around the face and is denoted $\partial^- R$, and the other goes in the counter-clockwise direction and is denoted $\partial^+ R$. 
    
  \end{enumerate}

The notion of planar graph can be defined purely combinatorially - see for example \cite{diestel2012graph} or \cite{mohar2001graphs} for basic references - but if the reader is happy relying on well-known topological facts like the Jordan curve theorem to formalize intuitions related to inner and outer faces, this combinatorial theory shouldn't be necessary for our purposes; it is only needed if one wants a purely combinatorial description.
\end{assumption}

\begin{prop}\label{prop:graphs_rewrite}
Given a graph as in \cref{assum:good_graph}, the corresponding rewrite system is convergent and has no critical branching.
\end{prop}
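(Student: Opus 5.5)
The plan is to make the rewrite system explicit, show that no two rule domains can overlap, and derive termination from a winding-number argument; confluence then comes for free from Newman's \cref{lem:newman}. The rewrite system attached to a graph $G$ as in \cref{assum:good_graph} has $G$ itself as underlying graph and one rule $\partial^- R \Rightarrow \partial^+ R$ for each inner face $R$; this is the choice made for the squares of \cref{ex:squares}.

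\textbf{No critical branchings.} The first step is a local observation: if an edge $e$ lies on $\partial^- R$, then $R$ is the face lying on one fixed side of $e$ (say its right side, relative to the orientation of $e$). This is because going clockwise around a bounded face keeps the face on the right. An edge has only one face on its right, so each edge belongs to $\partial^- R$ for at most one inner face $R$. Now suppose two rule domains overlap, i.e. $\partial^- R_1 = va$ and $\partial^- R_2 = aw$ with $a$ non-trivial, or one domain is a subpath of the other. They then share an edge, so $R_1 = R_2$. Since $G$ has no oriented loops, every path visits each vertex at most once, so a subpath of a path is determined by any one of its edges. Hence a self-overlap $va = aw$ of the path $\partial^- R$ forces $a = \partial^- R$ and $v, w$ trivial, which is the same subrule taken twice and not a genuine branching. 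The same argument rules out any overlap between distinct subrules inside a larger path. Therefore there are no critical pairs, and more generally no weakly (hence no strongly) critical $n$-branchings for $n \geqslant 2$.

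\textbf{Termination.} This is the main obstacle. Since $G$ is finite and has no oriented loops, for fixed endpoints $x,y$ there are only finitely many paths from $x$ to $y$, and rewriting preserves endpoints. It is therefore enough to show that the one-step rewrite relation on paths from $x$ to $y$ has no cycles. For this, fix a reference path $p_0$ from $x$ to $y$ (in the plane, any curve will do), and for each inner face $R$ pick an interior point $z_R$. For a path $p$ from $x$ to $y$, set
\[ \Phi(p) = \sum_R \operatorname{wind}(p \cdot p_0^{-1}, z_R). \]
A one-step reduction replaces a subpath $\partial^- R$ by $\partial^+ R$. This changes the closed curve by the boundary loop of $R$, traversed with a fixed orientation (counterclockwise, or clockwise, uniformly for all rules). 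Consequently it changes the winding number around $z_R$ by the same $\pm 1$ for every rule, and leaves the winding numbers around all other $z_{R'}$ unchanged, since those points lie outside $\overline{R}$. So $\Phi$ moves by $\pm1$ with a sign independent of the step, which excludes cycles. The only care needed is to make the winding-number bookkeeping rigorous (the smooth embedding and the Jordan curve theorem suffice). Alternatively, one can replace it with a purely combinatorial count of faces on the left of $p$ relative to $p_0$.

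\textbf{Conclusion.} The system is terminating and has no critical pairs, so it is vacuously locally confluent, and Newman's \cref{lem:newman} gives convergence. The absence of critical branchings was established in the first step.
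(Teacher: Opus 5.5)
Your proposal is correct and follows the paper's proof: an edge of $\partial^- R$ has $R$ on its right, so each edge lies in at most one rule domain and no overlaps (hence no critical branchings) exist. Newman's lemma then reduces convergence to termination, which you obtain from a planar invariant that changes monotonically under each rewrite step on a finite set of paths. The only variation is that you use the integer sum of winding numbers around one point per face instead of the paper's area functional $\int_p x\,dy$ with Green's theorem; this is exactly the combinatorial ``signed count of faces'' alternative the paper mentions at the end of its proof. One small fix: choose $p_0$ avoiding the points $z_R$, e.g.\ a path in $G$, so that the winding numbers are defined.
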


\begin{cor}\label{cor:graph_rewrite}
  Given a graph $G$ as in \cref{assum:good_graph}, the $\infty$-category generated by the graph $G$ together with one homotopy $\gamma_R: \partial^-R \simeq \partial^+ R$ for each face $R$ is equivalent to the $1$-category with the same presentation. 
\end{cor}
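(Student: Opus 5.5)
The plan is to derive \cref{cor:graph_rewrite} directly from \cref{prop:graphs_rewrite}, taken as given. The rewrite system in question has graph $G$ and one rule $\partial^- R \Rightarrow \partial^+ R$ for each inner face $R$. By \cref{prop:graphs_rewrite} it is convergent, so by \cref{prop:Brown_collaps} Brown's \scheme on $N(\Ccal)$ is defined, where $\Ccal = |G,R|$ is the $1$-category with this presentation. The work is then to list the critical cells in each dimension.

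First, the low-dimensional critical cells. In dimension $0$ every vertex of $G$ is critical. In dimension $1$, a cell $(\omega_0)$ satisfies $(C0)$ exactly when $\omega_0$ is a single generator, so the critical $1$-cells are the edges of $G$. In dimension $2$, a cell $(\omega_0,\omega_1)$ is critical when $\omega_0$ is a letter, $\omega_0\omega_1$ is reducible, and every strict initial segment of $\omega_0\omega_1$ is normal. Since $\omega_1$ is normal, $\omega_0\omega_1$ must then be exactly the domain of a rule. Hence the critical $2$-cells are in bijection with the faces $R$ via $R \mapsto (x,\omega)$, where $\partial^-R = x\omega$.

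Its boundary consists of the following three faces:
\begin{itemize}
\item $d_2 = (x)$;
\item $d_0 = (\omega)$, which by \cref{ex:Free_graphs} is the composite of edges up to inner horn fillers;
\item $d_1 = (\partial^+R)$, as $\partial^+R$ is normal.
\end{itemize}
So this cell is precisely the homotopy $\gamma_R$. If the rules do not literally form a reduced system in the sense of \cref{def:reduced}, I would check this directly from \cref{def:Browm_collapsing} rather than through \cref{sec:weak_vs_strong_criticallity}.

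Next, the higher dimensions. A critical $n$-cell with $n\geq 3$ requires $(C1_2)$ and $(C2_2)$, together with $(C1_1)$ and $(C2_1)$. I expect to show that such a cell yields an overlap between the rule ending at $\omega_1$ and a rule inside $\omega_1\omega_2$, that is, a critical pair. \cref{prop:graphs_rewrite} forbids this, so there are no critical cells of dimension $\geq 3$. When the system is reduced, this follows at once from \cref{prop:strongly_critical_branching}, since critical cells correspond to (strongly) critical branchings.

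Finally, I apply \cref{rk:scheme_give_presentation}. It exhibits $N(\Ccal)$ as the homotopy colimit obtained by starting from the vertices, freely attaching the edges, and then attaching one $2$-simplex per face along the boundary just computed. No higher cells appear. This iterated pushout is exactly the $\infty$-category generated by $G$ and the homotopies $\gamma_R$, so that $\infty$-category is equivalent to $N(\Ccal)$.

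The main obstacle is identifying the boundary of each critical $2$-cell with the prescribed gluing data. One must check that attaching $\Delta[2]$ along $(x),(\omega),(\partial^+R)$ agrees, up to homotopy, with attaching a $2$-cell between the paths $\partial^-R$ and $\partial^+R$. This should follow because composites of edge paths are inner-anodyne determined (\cref{ex:Free_graphs}), but some care is needed when $\omega$ or $\partial^+R$ has length greater than $1$.
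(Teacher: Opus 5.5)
Your route is the paper's: \cref{prop:graphs_rewrite} gives a convergent system with no critical pairs, so Brown's \scheme has critical cells only in dimensions $0$, $1$, $2$, and the corollary is read off via \cref{rk:scheme_give_presentation}. The paper leaves the identification of the attaching maps implicit, and that is exactly where your proposal fails. You assert $d_1(x,\omega)=(\partial^+R)$ ``as $\partial^+R$ is normal''. In fact $d_1(x,\omega)$ is the normal form of $\partial^-R$, equivalently of $\partial^+R$, and $\partial^+R$ need not be normal. Its edges have $R$ on their left and some face $R'$ on their right, so $\partial^+R$ can contain all of $\partial^-R'$. For instance, draw all edges pointing rightwards. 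Let $R$ have $\partial^-R=s\to m\to t$ on top and $\partial^+R=s\to u\to v\to w\to t$ below. Let $R'$ hang under the middle segment, with $\partial^-R'=u\to v\to w$ and $\partial^+R'=u\to z\to w$. Then the critical $2$-cell of $R$ is a homotopy from $\partial^-R$ to $s\to u\to z\to w\to t$, not to $\partial^+R$. So your final step, that the iterated pushout is ``exactly'' the $\infty$-category generated by $G$ and the $\gamma_R$, does not follow. The attaching data differ, and passing from one presentation to the other would need a separate change-of-generators argument.

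You can avoid computing boundaries altogether. Let $P_R$ be the category presented by the boundary graph $\partial R$ with the single rule $\partial^-R\Rightarrow\partial^+R$. There $\partial^+R$ \emph{is} normal. So Brown's \scheme, restricted to $N(\partial R)\cto N(P_R)$, has exactly one critical cell $(x_R,\omega_R)$, with boundary $x_R,\omega_R,\partial^+R$. Hence the pushout of $N(G)\leftarrow\coprod_R N(\partial R)\cto\coprod_R N(P_R)$ models the $\infty$-category of the statement. The functors $P_R\to\Ccal$ send $(x_R,\omega_R)$ to the critical cell $(x_R,\omega_R)$ of $N(\Ccal)$, because $\omega_R$ stays normal in $(G,R)$. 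So \cref{cor:h_pushout_from_rw} gives the equivalence directly. This presupposes $|\partial^-R|\geq 2$, and your lists of critical $1$- and $2$-cells make the same tacit assumption. It can fail: in the diagram of \cref{rk:gluing_diagrams}, the triangle's clockwise side is the single diagonal edge $f$. Then $f$ is not normal, $(f)$ is not a cell of $N(\Ccal)$, and that face contributes no critical $2$-cell. The conclusion survives, since $f$ together with $\gamma_R$ is contractible data. But such edges must be eliminated first, e.g.\ as in the remark following \cref{def:reduced}.
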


\begin{proof}
  Each edge $f$ in the graph has a face to its left and a face to its right (possibly the same if we allow the situation as in \cref{rk:eq_diagram}). And given a face $R$, the edges that are in $\partial^-R$ have $R$ to their right and the edges in $\partial^+ R$ have $R$ to their left. In particular, each edge appears in the domain of at most one rewrite rule. So no rewrite rule can overlap and hence there are no critical pairs or critical branchings. So, by Newman's \cref{lem:newman} we only have to prove termination to conclude the proof of the proposition.

  Informally, the proof of termination works because each application of a rule ``decreases the area under the path''; since there are finitely many paths in the graph (thanks to the assumption that the graph is loop-free), the process cannot cycle and hence has to terminate.

  Let us make this a little more formal: as $G$ has no directed loop, no directed path can pass twice through the same vertex, so as $G$ is also finite there are finitely many paths in $G$. To each directed path $p$ in the graph, we can associate:
  \[ \Phi(p) = \int_p x dy = \int_{t \in [0,1]} x(t) y'(t) dt \in \Rb\]
  where $(x(t),y(t))$ is a smooth parametrization of $p$, but the integral does not depend on the choice of parametrization. $\Phi(p)$ is essentially a signed version of the ``surface under the curve $p$'', where the sign depends on whether $p$ is under or over the $x=0$ axis and on whether it is going ``left to right'' or ``right to left''. If $p \Rightarrow q$ is a one-step reduction corresponding to a rule $R: \partial^-R \Rightarrow \partial^+ R$ then:

  \[ \Phi(q) = \Phi(p) - \left( \int_{\partial^- R} x dy - \int_{\partial^+ R} x dy\right) = \Phi(p) - \int_{\partial R} x dy  \]
  where $\partial R$ is the boundary of the face $R$ oriented counterclockwise. By Green's theorem, this integral is exactly the area of $R$, hence $ \Phi(q) = \Phi(p) - \text{Area}(R)$. So each application of a rule reduces $\Phi$ by at least the area of the smallest face; as there are only finitely many paths, i.e. finitely many possible values of $\Phi$, this proves that there cannot be an infinite sequence of reduction steps, hence the rewrite system is terminating.

The proof can also be rephrased in a purely combinatorial way by replacing $\Phi$ with a (signed) count of the number of faces between $\gamma_0$ and $\gamma$ when $\gamma_0 \Rightarrow^* \gamma$, and checking that this strictly increases (or decreases, depending on convention) by $1$ every time we apply a reduction rule. Making this formal requires slightly more graph theory to make sure this notion makes sense, but it is definitely possible.  
\end{proof}

\begin{remark}\label{rk:gluing_diagrams}
  The main application of \cref{prop:graphs_rewrite} and \cref{cor:graph_rewrite} is to give formal justification to many constructions of ``cutting and pasting'' of diagrams that we very often do in higher category theory without always properly justifying them - even if each instance of these manipulations is not very hard to prove rigorously, the proofs are not always the same and are rarely included in published papers. We believe these results are a very effective way to cleanly justify all these manipulations with almost no effort. To give an example of what we mean, looking at a complicated ``commutative diagram'' like:
\[\begin{tikzcd}
{} \arrow[dd] \arrow[r] & {} \arrow[d] \arrow[r]  & {} \arrow[d] \arrow[rd] &              \\
                        & {} \arrow[r] \arrow[ld] & {} \arrow[r] \arrow[d]  & {} \arrow[d] \\
{} \arrow[r]            & {} \arrow[r]            & {} \arrow[r]            & {}          
\end{tikzcd}\]
it clearly satisfies \cref{assum:good_graph}, so by \cref{cor:graph_rewrite}, saying that such a diagram commutes simply means that each face commutes. So in order to build a diagram like this in a quasicategory $\Ccal$, we can separately justify that each face commutes without ever having to address the diagram as a whole - and we can use a different argument for each face, coming from a different commutative diagram. Once we know the whole diagram commutes in a category $\Ccal$, we conclude that we have a morphism of simplicial sets $N(P) \to \Ccal$ (where $P$ is the category, or more generally the poset, presented by the diagram), so we can freely extract parts of the diagram that are again commutative and glue them to other diagrams.
\end{remark}

\begin{remark}\label{rk:eq_diagram} In \cref{assum:good_graph}.\ref{assum:good_graph:sink_source} we can allow faces of the form:
\[\begin{tikzcd}
A \arrow[rr, "b", bend left] \arrow[rr, "c"', bend right] & X \arrow[l, "a" description] & B
\end{tikzcd}\]
More formally, we mean that when an edge has the same face to its left and its right (like $a$ here), it is counted as appearing twice in the boundary, so the point $X$ has indeed two edges starting from it ($a$ twice), so it is a source; $A$ has the same number of incoming and outgoing edges that are part of the boundary of the face, and $B$ has two incoming edges, so it is a sink. Of course, when looking for sinks and sources we only count the edges that are part of the boundary, not any other edges in the graph. In the case of the face above we have
\[ \partial^- R  = b \circ a \qquad \partial^+ R = c \circ a\]
so the only equation imposed in the rewrite system is $b\circ a = c \circ a$; the equation $b =c $ is \emph{not} imposed.

The only modification to the proof of \cref{prop:graphs_rewrite} is that when forming $\left( \int_{\partial^- R} x dy - \int_{\partial^+ R} x dy\right)$, the edges that are ``inside'' the face, like $a$ in our example, i.e. that have the face $R$ both to their left and to their right, appear in both $\partial^- R$ and $\partial^+ R$, so they cancel out and we obtain the integral over the real ``outer boundary'' of the face, which in the end gives the area, ignoring the inner edges.
\end{remark}

\begin{remark}
  Also, the proof of \cref{prop:graphs_rewrite} and \cref{cor:graph_rewrite} still works if we remove item $(4)$ from \cref{assum:good_graph} above, but only putting in a rewrite rule (and homotopies in \cref{cor:graph_rewrite}) for the faces that actually satisfy $(4)$ and leaving the other ones ``open''. More generally, we can choose any subset of faces that satisfies condition $(4)$ and only put homotopies and rewrite rules for these. 
\end{remark}

\begin{example}\label{ex:cube_1}
  The reader should be aware that:
  \begin{enumerate}
  \item not all diagrams satisfying \cref{assum:good_graph} are ``commutative'', i.e. present a poset.
  \item The category presented by this rewrite system depends on ``how the diagram is drawn'', that is, on the choice of the planar embedding.
  \end{enumerate}
   The example from \cref{rk:eq_diagram} is one such example of the first point, and moving the $X$ ``outside'' of the face produces an example of the second point, as with $X$ outside the face, it becomes an equation $b=c$. For a slightly different type of example, consider the cubical diagrams drawn in the plane in two different ways as:
\[\begin{tikzcd}
X \arrow[ddd, "w"'] \arrow[rrr, "u"] &                                  &                        & Y \arrow[ddd, "v"] & A \arrow[ddd, "w'"'] \arrow[rrr, "u'"] \arrow[rd] &                       &             & B \arrow[ddd, "v'"] \arrow[ld] \\
                                     & A \arrow[d] \arrow[r] \arrow[lu] & B \arrow[d] \arrow[ru] &                    &                                                   & X \arrow[r] \arrow[d] & Y \arrow[d] &                                \\
                                     & C \arrow[r] \arrow[ld]           & D \arrow[rd]           &                    &                                                   & Z \arrow[r]           & K           &                                \\
Z \arrow[rrr, "k"']                  &                                  &                        & K                  & C \arrow[rrr, "k'"'] \arrow[ru]                   &                       &             & D \arrow[lu]                  
\end{tikzcd}\]

Then in the left diagram, there is no equation $uv = wk$ as this outer square is not a face, and cannot be obtained by combining other faces (in fact, both $uv$ and $wk$ are already normal). On the right we have a different embedding of the same diagram, and now the equation $uv = wk$ is present, but it is the equation $u' v'=w'k'$ that is missing.
\end{example}

\begin{remark}
  While \cref{prop:graphs_rewrite} really relies on the graphs being planar, the general idea of seeing a diagram as a rewrite system with no or few critical branchings works with higher-dimensional diagrams as well. Given a cubical diagram like

\[\begin{tikzcd}
A \arrow[rr] \arrow[dd] \arrow[rd] &                         & B \arrow[rd] \arrow[dd] &              \\
                                   & X \arrow[rr] \arrow[dd] &                         & Y \arrow[dd] \\
C \arrow[rd] \arrow[rr]            &                         & D \arrow[rd]            &              \\
                                   & Z \arrow[rr]            &                         & W           
                                 \end{tikzcd}\]
                               It is possible to orient the faces so as to obtain a convergent rewrite system with only one critical branching (start from \cref{ex:cube_1} and add one rewrite rule for the outer face - the termination argument still works, and it adds a single critical pair). Unpacking our result in this case recovers the expected result that a cubical diagram in an $\infty$-category is the same as 6 squares commuting up to homotopies together with one ``hexagonal'' compatibility relation between the 6 homotopies.

In particular, arguments like the one in \cref{rk:gluing_diagrams} can be applied to higher-dimensional diagrams as well, as long as we are careful with the critical cells of higher dimension as well. Developing a general theory of these does not seem convenient, so we leave that aside for now.
\end{remark}

\bibliography{../../../Biblio}{}
\bibliographystyle{alpha}

\end{document}